\newcommand{\ds}{\displaystyle}
\newcommand{\rrvert}{\vert}
\newcommand{\llvert}{\vert}
\newcommand{\eqref}[1]{(\ref{#1})}
\newtheorem{theorem}{Theorem}[section]
\newtheorem{corollary}[theorem]{Corollary}
\newtheorem{lemma}[theorem]{Lemma}
\newtheorem{proposition}[theorem]{Proposition}
\newcommand{\pr}{\mathbf{P}}
\newcommand{\e}{\mathbf{E}}
\newcommand{\Z}{\mathbb{Z}}
\newcommand{\R}{\mathsf{R}}
\newcommand{\wZ}{\tilde{Z}}
\newcommand{\bDelta}{\bolds{\Delta}}
\newcommand{\bL}{\mathbf{L}}
\newcommand{\cN}{\mathcal{N}}
\newcommand{\1}{\mathsf{1}}
\begin{document}
\begin{frontmatter}

\title{Multifractal analysis of superprocesses\\ with~stable branching in dimension one}
\runtitle{Multifractal analysis of superprocesses}

\begin{aug}
\author[A]{\fnms{Leonid}~\snm{Mytnik}\corref{}\thanksref{T1}\ead[label=e1]{leonid@ie.technion.ac.il}}
\and
\author[B]{\fnms{Vitali}~\snm{Wachtel}\thanksref{T2}\ead[label=e2]{wachtel@mathematik.uni-muenchen.de}\ead[label=u1,url]{http://www.foo.com}}
\runauthor{L. Mytnik and V. Wachtel}
\affiliation{Technion---Israel Institute of Technology and University of Munich}
\address[A]{Faculty of Industrial Engineering\\
\quad and Management\\
Technion Israel Institute of
Technology\\
Haifa 32000\\
Israel\\
\printead{e1}}
\address[B]{Mathematical Institute\\
University of Munich\\
Theresienstrasse 39, D--80333\\
Munich\\
  Germany\\
\printead{e2}}
\end{aug}
\thankstext{T1}{Supported by the Israel Science Foundation grant.}
\thankstext{T2}{Supported by the German Israeli Foundation for Scientific Research and
Development, Grant  G-2241-2114.6/2009.}

\received{\smonth{1} \syear{2013}}
\revised{\smonth{7} \syear{2014}}

\begin{abstract}
We show that density functions of a $(\alpha,1,\beta)$-superprocesses are
almost sure multifractal for $\alpha>\beta+1$, $\beta\in(0,1)$ and calculate
the corresponding spectrum of singularities.
\end{abstract}

\begin{keyword}[class=AMS]
\kwd[Primary ]{60J80}
\kwd{28A80}
\kwd[; secondary ]{60G57}
\end{keyword}
\begin{keyword}
\kwd{Multifractal spectrum}
\kwd{superprocess}
\kwd{H\"older continuity}
\kwd{Hausdorff dimension}
\end{keyword}
\end{frontmatter}

\section{Introduction, main results and discussion}\label{sec1}

For $0<\alpha\leq2$ and $1+\beta\in(1,2]$, the
so-called $(\alpha,d,\beta)$-\emph{superprocess} $X=\{X_{t}\dvtx  t\geq
0\}$ in $\mathsf{R}^{d}$ is a finite measure-valued process related
to the log-Laplace equation
%
\begin{equation}
\label{logLap}
\frac{d}{dt}u = \Delta _{\alpha}u +au-
bu^{1+\beta},
\end{equation}
where $ a\in\mathsf{R}$ and $ b>0$ are any fixed
constants. Its underlying motion is described by the fractional Laplacian
$\Delta_{\alpha}:=-(-\Delta)^{\alpha/2}$ determining a symmetric $\alpha$-stable motion in
$\mathsf{R}^{d}$ of index $ \alpha\in(0,2]$ (Brownian motion corresponds to
$ \alpha=2$), whereas its continuous-state branching mechanism
%
\begin{equation}
\label{not.Psi}
v \mapsto -av+bv^{1+\beta},\qquad v\geq0,
\end{equation}
belongs to the domain of attraction of a stable law of index $ 1+\beta
\in(1,2]$ (the branching mechanism is critical if $a=0$).

Let\vspace*{1pt} $d<\frac{\alpha}{\beta}$. Then, for any fixed time $t$, $X_t(dx)$ is  a.s. absolutely continuous with respect to the
Lebesgue measure (cf. Fleischmann \cite{Fleischmann1988.critical}
for $a=0$).

In the case of $\beta=1$, there is a continuous version of the density of $X_t(dx)$ for all $\alpha>1$;
see Konno and Shiga \cite{KonnoShiga1989}. A careful examination of their arguments shows that this
density is H\"older continuous with any exponent smaller than $(\alpha-1)/2$.

Now we consider the case $\beta<1$.
As shown in Fleischmann, Mytnik and Wachtel
(\cite{FMW10}, Theorem~1.2(a), (c)), there is
a \emph{dichotomy} for the  density function of the measure (in what follows, we just say the ``density of
 the measure''):
there is a~continuous version of the density of $X_{t}(dx)$  if
$d=1$ and $ \alpha>1+\beta$, but otherwise the density
is unbounded  on open sets of
positive $X_{t}(dx)$-measure. Note that the case of $\alpha=2$ had been studied
earlier in Mytnik and Perkins \cite{MP03}.
In the case of
continuity, H\"{o}lder regularity properties of the density had been
studied in \cite{FMW10,FMW11}.

From now on, we always assume that $d=1$, $\beta<1$ and $\alpha>1+\beta$, that is, there is a continuous
 version of the density at fixed time $t$. This density, with a slight abuse of notation, will be also denoted by
 $X_t(x), x\in \R$.

Let us first recall the notion of pointwise regularity (see,
e.g., Jaffard \cite{Jaff99}). We say that a function $f$
has regularity of index $\eta>0$ at a point $x\in \R$,
if there exists an open neighborhood $U(x)$ of $ x$, a constant $C>0$ and
a polynomial $P_x$ of degree at most $\lfloor \eta \rfloor$
such that
%
\begin{equation}
\label{eq9101}
\bigl|f(y)-P_x(y) \bigr| \leq C |y-x|^{\eta}\qquad
\mbox{for all }y\in U(x).
\end{equation}
For $\eta\in (0,1)$, the above definition coincides with the definition of H\"older continuity with index $\eta$ at
a point. Note that sometimes the class of functions satisfying~(\ref{eq9101}) is denoted by $C^{\eta}(x)$.
Now, given $f$ one would like to find the supremum over all $\eta$ such that~(\ref{eq9101}) holds for
 some constant $C$ and polynomial $P_x$. This leads to the definition of so-called
\textit{optimal} H\"older  exponent (or index) of $f$ at $x$:
%
\begin{equation}
H_f(x):= \sup \bigl\{\eta>0\dvtx f\in C^{\eta}(x) \bigr\},
\end{equation}
and we set it to $0$ if $f \notin C^{\eta}(x)$ for all $\eta>0$. To simplify the exposition, we will
sometimes call $H_f(x)$ the H\"older exponent of $f$ at $x$.

Let us fix $t>0$ and  return to the continuous density $X_{t}$ of the
$(\alpha,1,\beta)$-superprocess.
In what follows, $H_X(x)$ will denote the optimal
H\"{o}lder exponent of $X_{t}$ at $x\in \R$. In Theorem~1.2(a), (b)
of~\cite{FMW10}, the
so-called \emph{optimal index} for \emph{local} H\"{o}lder continuity of
$X_{t}$ had been determined as
%
\begin{equation}
\label{etac}
\eta_{\mathrm{c}} := \frac{\alpha}{1+\beta}-1 \in (0,1).
\end{equation}
This means that $\inf_{x\in K} H_X (x)\geq\eta_{\mathrm{c}}$ for any compact $K$ and, moreover,
in any nonempty open set $U\subset\mathsf{R}$ with
$X_{t}(U)>0$ one can find (random) points $x$ such that $H_X(x)=\eta
_{\mathrm{c}}$.
Moreover, it was proved in~\cite{FMW11} that for any fixed point $x\in \R$ we have
\[
H_X(x)= \bar{\eta}_{\mathrm{c}} := \frac{1+\alpha}{1+\beta}-1\qquad
\mbox{a.s. on } \bigl\{X_t(x)>0 \bigr\}
\]
in the case of $\beta>(\alpha-1)/2$, and
\[
H_X(x)\geq 1 \qquad \mbox{a.s. on } \bigl\{X_t(x)>0
\bigr\}
\]
if $\beta\leq(\alpha-1)/2$.

\begin{remark}
In~\cite{FMW11}, the classical definition of H\"older exponent was used, which can take only values between
 $0$ and $1$. Hence, the result in~\cite{FMW11}  states that the optimal index of H\"older continuity (in classical
 sense) equals  $\min \{\frac{1+\alpha}{1+\beta
}-1, 1\}$, for any $\beta\in (0,\alpha-1)$.
\end{remark}

The purpose of this paper  includes proving that on any open set of positive $X_t$ measure, and for any $\eta\in
(\eta_{\mathrm{c}},\bar{\eta}_{\mathrm{c}})\setminus\{1\}$ there are, with probability one,
(random) points $x\in\R$ such that the optimal H\"{o}lder index $H_X(x)$
of $X_{t}$ at $x$ is exactly $\eta$.
Moreover, for an open set  $U\subset \R$,  we are going to
determine the \emph{Hausdorff dimension}, say $D_{U}(\eta)$, of the (random) set
\[
\mathcal{E}_{U,X,\eta}:= \bigl\{x\in U\dvtx H_X(x)=\eta \bigr
\}.
\]
We will show that the function $ \eta\mapsto
D_{U}(\eta)$ is independent of $U$;  it reveals the so-called
\emph{multifractal spectrum} related to the optimal H\"{o}lder index at points.

To formulate our main result, we need also the following notation. Let
$\mathcal{M}_{\mathrm{f}}$ denote the set of finite measures on $\mathsf{R}$, and
for $\mu\in \mathcal{M}_{\mathrm{f}}$, $|\mu|$ will denote the total mass $\mu(\R)$.
Our main result is as follows.

\begin{theorem}[(Multifractal spectrum)]\label{thmmfractal}
Fix $t>0$,  and $X_{0}=\mu \in\mathcal{M}_{\mathrm{f}}$.
Let $d=1$ and
$\alpha>1+\beta$.
Then,
for any $\eta\in [
\eta_{\mathrm{c}}, \bar\eta_{\mathrm{c}})\setminus\{1\}$, with probability one,
%
\[
D_U(\eta)= (\beta+1) (\eta-\eta_{\mathrm{c}}) \qquad
\mbox{for any open set }U\subset \R,
\]
whenever  $X_t(U)>0$.
\end{theorem}

\begin{remark}
Let us consider the case $\eta=\bar\eta_{\mathrm{c}}$. First note that
if $\bar\eta_{\mathrm{c}}<1$ then, for every fixed $x$, $H_X(x)=\bar\eta_{\mathrm{c}}$
almost surely on the event $\{X_t(x)>0\}$, see Theorem~1.1 from \cite{FMW11}. We conjecture
that this statement is also valid whenever $\bar\eta_{\mathrm{c}}\geq 1$. This
would imply that $D_U(\bar\eta_{\mathrm{c}})=1$ almost surely on $\{X_t(U)>0\}$.
Indeed, for $B$~being an arbitrary interval in $(0,1)$ define
\[
\lambda({\mathcal E}_{B,X,\bar\eta_{\mathrm{c}}}) = \int_B
1_{\{H_X(x)= \bar\eta_{\mathrm{c}}\}}\, dx,
\]
where $\lambda$ is the Lebesgue measure on $\R$. Then, by the Fubini theorem, we have
\begin{eqnarray*}
\e \Bigl[\lambda({\mathcal E}_{B,X,\bar\eta_{\mathrm{c}}})\big|\inf_{y\in B}X_t(y)>0
\Bigr]&=& \e \biggl[ \int_B 1_{\{H_X(x)= \bar\eta_{\mathrm{c}}\}} \,dx \Big|\inf
_{y\in B}X_t(y)>0 \biggr]
\\
&=& \int_B \pr \Bigl(H_X(x)= \bar
\eta_{\mathrm{c}} \big|\inf_{y\in B}X_t(y)>0 \Bigr)\,
dx
\\
&=&\lambda(B),
\end{eqnarray*}
in the last step we used our conjecture that
$H_X(x)= \bar\eta_{\mathrm{c}}$ with probability one for every fixed point $x$. That is, given
$\{\inf_{y\in B}X_t(y)>0\}$, we get that, with probability one,
$D_{B}(\bar\eta_{\mathrm{c}})=1$. We may fix $\omega$ outside a $\pr$-null set
so that this holds for any rational ball $B$, that is, for any ball with a
rational radius and center. Let $U$ be an arbitrary open set such that
$\{\inf_{y\in U}X_t(y)>0\}$. Then there is always a rational ball $B$ in
$U$ such that  $\{\inf_{y\in B}X_t(y)>0\}$, and so the result follows
immediately from what we derived for the fixed ball.
\end{remark}

\begin{remark}
The fact that our proof fails in the case $\eta=1$ is even more disappointing.
Formally, it happens for some technical reasons, but one has also to note, that
this point is critical: it is the borderline between differentiable and nondifferentiable functions.
However, we still believe that the function $D_U(\cdot)$ can be continuously extended to $\eta=1$, that is,
$D_U(1)=(\beta+1)(1-\eta_{\mathrm{c}})$ almost surely on $\{X_t(U)>0\}$.
\end{remark}

\begin{remark}
The condition $\alpha>1+\beta$ excludes the case of the quadratic
super-Brownian motion, that is, $\alpha=2$, $\beta=1$. But it is a
known ``folklore'' result that the super-Brownian motion $X_t(\cdot)$ is almost
surely monofractal on any open set of strictly positive density.
That is, $\pr$-a.s., for any $x$ with $X_t(x)>0$ we have $H_X(x)=1/2$.
For the fact that $H_X(x)\geq1/2$, for any $x$, see Konno and Shiga \cite{KonnoShiga1989}
and Walsh~\cite{Walsh1986}. To get that $H_X(x)\leq 1/2$ on the event
$\{X_t(x)>0\}$, one can show that
\[
\limsup_{\delta\to0}\frac{|X_t(x+\delta)-X_t(x)|}{\delta^\eta}=\infty \qquad \mbox{for all }x
\mbox{ such that }X_t(x)>0, \pr\mbox{-a.s.},
\]
for every $\eta>1/2$. This result follows from the fact that for $\beta=1$
the noise driving the corresponding stochastic equation for $X_t$ is Gaussian
(see (0.4) in \cite{KonnoShiga1989}) in contrast to the case of $\beta<1$
considered here, where we have driving discontinuous noise with L\'evy type
intensity of jumps.
\end{remark}

The multifractal spectrum of random functions and measures has attracted
attention for many years and has been studied, for example, in Dembo et
al. \cite{DemboPeresRosenZeitouni2001}, Durand \cite{Durand2009}, Hu and
Taylor \cite{HuTaylor2000}, Klenke and M\"{o}rters \cite{KlenkeMoerters2005},
Le Gall and Perkins \cite{LeGallPerkins1995}, M\"{o}rters and Shieh
\cite{MoertersShieh2004} and Perkins and Taylor \cite{PerkinsTaylor1998}. The
multifractal spectrum of singularities that describes the Hausdorff dimension
of sets of different H\"{o}lder exponents of functions was investigated for
deterministic and random functions in Jaffard
\cite{Jaff99,Jaffard2000,Jaffard2004} and Jaffard and Meyer
\cite{JaffardMeyer1996}.

We now turn to the description of our approach.
We would like to verify the spectrum of singularities of $X_t(\cdot)$
on any open (random) set $U$ whenever $X_t(U)>0$. Based on the ideas
of the proof of Theorem~1.1(b) in~\cite{MP03}, it is enough to verify
the spectrum of singularities of $X_t(\cdot)$ on any \textit{fixed} open
ball $U$ in $\R$.  In what follows, we fix, without loss of generality,
$U=(0,1)$. The extension of our argument to general open $U$ is trivial.

Next, we derive a representation for the density $X_t(\cdot)$, which will
be used in the proof.
Let $ p^{\alpha}$ denote the continuous $\alpha$-stable
transition kernel related to the fractional Laplacian
$\Delta_{\alpha}=-(-\Delta)^{\alpha/2}$ in $\mathsf{R}$, and $(S^{\alpha}_t,t\geq0)$ the related
semigroup, that is,
\[
S^{\alpha}_tf(x)=\int_{\R}p^\alpha(x-y)f(y)
\,dy\qquad \mbox{for any bounded function }f
\]
and
\[
S^{\alpha}_t\nu(x)=\int_{\R}p^\alpha(x-y)
\nu(dy)\qquad \mbox{for any finite measure }\nu.
\]
Fix $X_{0}=\mu\in\mathcal{M}_{\mathrm{f} }\setminus\{0\}$.
First, we want to recall the \emph{martingale decomposition} of the
$(\alpha,1,\beta)$-superprocess $X$ (valid for any $\alpha\in (0,2],\beta\in (0,1)$; see,
e.g., \cite{FMW10}, Lemma~1.6): for all
sufficiently smooth bounded nonnegative functions $\varphi$ on~$\mathsf{R}$ and $t\geq0$,
%
\begin{equation}
\label{mart.dec}
\langle X_{t},\varphi \rangle = \langle \mu,\varphi
\rangle +\int_{0}^{t} ds \langle
X_{s}, \Delta _{\alpha}\varphi \rangle +M_{t}(
\varphi)+a I_{t}(\varphi)
\end{equation}
with discontinuous martingale
%
\begin{equation}
\label{mart}
t \mapsto M_{t}(\varphi) := \int_{(0,t]\times\mathsf{R}\times
\mathsf{R}_{+}}
\tilde{\cN} \bigl( d(s,x,r) \bigr) r \varphi(x)
\end{equation}
and increasing process
%
\begin{equation}
\label{incr} t \mapsto I_{t}(\varphi) := \int_{0}^{t}
ds \langle X_{s},\varphi \rangle.
\end{equation}
Here, $\tilde{\cN}:=\cN-\widehat{\cN}$, where $\cN  (d
(s,x,r) ) $ is a random measure on $(0,\infty)\times\mathsf{R}
\times(0,\infty)$ describing all the jumps $ r\delta_{x}$ of $X$ at times $ s$ at sites $x$
 of size $ r$ (which are the only discontinuities of
the process $X$). Moreover,
%
\begin{equation}
\label{decomp}
\widehat{\cN} \bigl( d(s,x,r) \bigr) = \varrho
\,ds X_{s}(dx) r^{-2-\beta}\,dr
\end{equation}
is the compensator of $ \cN$, where $ \varrho:=b(1+\beta
)\beta/\Gamma(1-\beta)$ with $\Gamma$ denoting the Gamma function.

Under our assumptions, the random measure $ X_{t}(dx)$ is a.s. absolutely continuous for every fixed $t>0$.
\textrm{F}rom the Green function representation related to (\ref{mart.dec})
(see, e.g., \cite{FMW10}, (1.9)), we obtain the following
representation of a version of the density function of $ X_{t}(dx)$
(see, e.g., \cite{FMW10}, (1.12)):
%
\begin{eqnarray}
X_{t}(x) &=& \mu \ast p_{t}^{\alpha} (x) + \int
_{(0,t]\times
\mathsf{R}} M \bigl(d(s,y) \bigr) p_{t-s}^{\alpha}(y-x)
\nonumber\\
&&\label{rep.dens} {}+ a \int_{(0,t]\times\mathsf{R}} I \bigl(d(s,y) \bigr)
p_{t-s}^{\alpha}(y-x)\\
&=:& Z^{1}(x)+Z^{2}(x)+Z^{3}(x),
\qquad x\in\mathsf{R}\nonumber
\end{eqnarray}
(with notation in the obvious correspondence).
Note that although $Z^i, i=1,2,3$, depend on $t$,  we omit the corresponding subscript since $t$
is fixed throughout the paper.
$M (d(s,y) )$
in~(\ref{rep.dens}) is the martingale measure related to (\ref{mart})
and $I  (d(s,y) )$ the random measure
related to (\ref{incr}). Note that by Lemma~1.7 of~\cite{FMW10} the class of  ``legitimate''
integrands
with respect to the martingale measure $M  (d(s,y) )$
includes the set of functions $\psi$ such that for some $p\in (1+\beta, 2)$,
%
\begin{equation}
\label{eq11101}
\int_0^T ds \int_{\mathsf{R}} dx S^{\alpha}_s\mu(x)\bigl|
\psi(s,x)\bigr|^p <\infty\qquad \forall T>0.
\end{equation}
We let ${\mathcal L}^p_{\mathrm{loc}}$ denote the space of  equivalence classes of
measurable functions satisfying~(\ref{eq11101}). For $\alpha>1+\beta$,
it is easy to check that, for any $t>0, z\in \R$,
 $(s,x)\mapsto p^{\alpha}_{t-s}(z-x)\1_{\{s<t\}}$ is in  ${\mathcal L}^p_{\mathrm{loc}}$ for any
$p\in (1+\beta, 2)$, and hence the stochastic integral in the representation~(\ref{rep.dens}) is
well defined.

$Z^1$ is obviously twice differentiable. Moreover, it turns out (see
Corollary~\ref{Ztricorr}) that $Z^3$ is H\"older continuous of index
$\alpha{\mathsf{1}}_{\{\bar{\eta}_{\mathrm{c}}>1\}}+{\mathsf{1}}_{\{\bar{\eta}_{\mathrm{c}}\leq1\}}$.
Noting that this index is not smaller that $\overline{\eta}_{\mathrm{c}}$, we conclude
that the multifractal structure of $X_t$ coincides with that of $Z^2$. Recalling
the definitions of $Z^2$ and $M(ds,dy)$, we see that there is a~``competition''
between branching and motion: jumps of the martingale measure~$M$ try to destroy
smoothness of $X_t(\cdot)$ and $p^\alpha$ tries to make $X_t(\cdot)$ smoother. Thus, it is natural
to expect that $\{x\dvtx H_{Z^2}(x)=\eta\}$ can be described by jumps of a~certain
order depending on~$\eta$.

In Section~\ref{sec4}, we construct a random set $S_\eta$ such that
$\operatorname{dim}(S_\eta)\leq(1+\beta)(\eta-\eta_c)$ and
$\{H_{Z^2}(x)=\eta\}\subseteq S_{\eta+\varepsilon}$.
This,
after some simple manipulations, allows us to obtain the bound
$\operatorname{dim}(\{H_{Z^2}(x)=\eta\})\leq (1+\beta)(\eta-\eta_{\mathrm{c}})$.

It turns out that $S_\eta$ is not very convenient for the derivation
of the lower bound for $\operatorname{dim}(\{H_{Z^2}(x)=\eta\})$. For this
reason, in Section~\ref{sec5}, we introduce an alternative random set
$\tilde{J}_{\eta,1}$ with $\operatorname{dim}(\tilde{J}_{\eta,1})\geq  (\beta+1)(\eta-\eta_{\mathrm{c}})$, on
which we show existence of jumps which should lead to $H_{Z^2}(x)\leq\eta$
for $x\in\tilde{J}_{\eta,1}$. However, if several jumps occur in a
proximity of a point $x$ then they can compensate each other. To show that
this possible scenario  does not affect the Hausdorff dimension of the set,
$\{H_{Z^2}(x)=\eta\}$ is the most difficult part of our proof. This is
not unexpected: in our previous papers \cite{FMW10,FMW11}, the proofs of
the optimality of H\"older indices were much harder than the derivation of the
H\"older continuity. More precisely, we prove that such a compensation is
possible only  on a set of the Hausdorff dimension strictly smaller than
$(\beta+1)(\eta-\eta_{\mathrm{c}})$, and hence this does not influence the dimension result.

\section{Preliminaries}
\label{sec3}
\subsection{Estimates for the transition kernel of the \texorpdfstring{$\alpha$}{alpha}-stable motion}

The symbol~$C$ will always
denote a generic positive constant, which might change\vadjust{\goodbreak} from line  to line. On
the other hand, $C_{(\#)}$ denotes a constant appearing in formula line (or array)~$(\#)$.

Throughout the paper, we will need the following bound; see \cite{FMW10}, Lemma~2.1.

\begin{lemma}
\label{L1old}
For every $\delta\in[0,1]$,  there exists a constant $C>0$ such that
%
\begin{equation}
\label{L1.1}
\qquad \hspace*{3pt}\bigl|p_{t}^{\alpha}(x)-p_{t}^{\alpha}(y)
\bigr| \leq C \frac{|x-y|^{\delta
}}{t^{\delta/\alpha}} \bigl(p_{t}^{\alpha}(x/2)+p_{t}^{\alpha}
(y/2) \bigr), \qquad t>0, x,y\in\mathsf{R}.
\end{equation}
\end{lemma}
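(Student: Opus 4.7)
The plan is to reduce to $t=1$ by the scaling identity $p_t^{\alpha}(x) = t^{-1/\alpha}p_1^{\alpha}(t^{-1/\alpha}x)$, which transforms the desired inequality, with $u:=t^{-1/\alpha}x$, $v:=t^{-1/\alpha}y$, into
\begin{equation*}
|p_1^{\alpha}(u)-p_1^{\alpha}(v)|\,\le\,C\,|u-v|^{\delta}\bigl(p_1^{\alpha}(u/2)+p_1^{\alpha}(v/2)\bigr),
\end{equation*}
and then to prove this by splitting into the cases $|u-v|\ge 1$ and $|u-v|<1$. The two standard ingredients I would use about the one-dimensional symmetric $\alpha$-stable density are: (i) the two-sided polynomial decay $c(1+|z|)^{-(1+\alpha)}\le p_1^{\alpha}(z)\le C(1+|z|)^{-(1+\alpha)}$ and (ii) the derivative bound $|(p_1^{\alpha})'(z)|\le C(1+|z|)^{-(2+\alpha)}$. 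From (i) one immediately obtains $p_1^{\alpha}(z)\le C'p_1^{\alpha}(z/2)$ for all $z\in\mathsf R$, and combining (i) and (ii) gives $|(p_1^{\alpha})'(z)|\le C'' p_1^{\alpha}(z)$.

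In the \emph{large-separation} case $|u-v|\ge 1$, since $\delta\in[0,1]$ one has $|u-v|^{\delta}\ge 1$, and the triangle inequality together with the halving comparison yields
\begin{equation*}
|p_1^{\alpha}(u)-p_1^{\alpha}(v)|\le p_1^{\alpha}(u)+p_1^{\alpha}(v)\le C'\bigl(p_1^{\alpha}(u/2)+p_1^{\alpha}(v/2)\bigr)\le C'|u-v|^{\delta}\bigl(p_1^{\alpha}(u/2)+p_1^{\alpha}(v/2)\bigr).
\end{equation*}
In the \emph{small-separation} case $|u-v|<1$, the mean-value theorem and $|u-v|\le |u-v|^{\delta}$ give $|p_1^{\alpha}(u)-p_1^{\alpha}(v)|\le |u-v|^{\delta}|(p_1^{\alpha})'(w)|$ for some $w$ between $u$ and $v$. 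I then bound $|(p_1^{\alpha})'(w)|\le C\,p_1^{\alpha}(w)\le C'''\bigl(p_1^{\alpha}(u/2)+p_1^{\alpha}(v/2)\bigr)$. The last comparison is split into the trivially bounded regime (all of $|u|,|v|,|w|\le 3$, where all densities at these points and their halves are comparable to constants) and the tail regime ($|u|\ge 3$, so $v$ and $w$ have the same sign and $|u|,|v|,|w|,|u/2|,|v/2|$ are mutually comparable up to a factor of $2$), in which case (i) makes the quantities comparable up to a universal constant.

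Combining the two cases and returning to arbitrary $t$ through the scaling (which inserts exactly the factor $t^{-\delta/\alpha}$ on the right-hand side) yields the stated inequality. I do not anticipate a deep obstacle here: the result is a classical H\"older-type estimate for stable densities, and every step rests on the well-known asymptotics (i)--(ii). The only genuine care is in the tail bookkeeping of the small-separation case, where one must verify that $w$ is comparable to both $u$ and $v$ so that $p_1^{\alpha}(w)$ can in turn be controlled by $p_1^{\alpha}(u/2)+p_1^{\alpha}(v/2)$ with a constant independent of $u,v,\delta$.
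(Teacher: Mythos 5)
The paper itself contains no proof of this lemma: it is quoted verbatim from \cite[Lemma~2.1]{FMW10}, so there is no in-paper argument to compare against. Judged on its own, your proof is correct and follows the standard route that the cited proof also takes: the exact scaling reduction to $t=1$ (which produces precisely the factor $t^{-\delta/\alpha}$), then a split into $|u-v|\geq 1$ (handled by the halving comparison $p_1^\alpha(z)\leq C p_1^\alpha(z/2)$) and $|u-v|<1$ (handled by the mean value theorem plus the comparability of $u,v,w$ in the tail). Two small points deserve attention. First, your ingredient (i), the two-sided bound $p_1^\alpha(z)\asymp(1+|z|)^{-1-\alpha}$, is valid only for $\alpha<2$; since the paper's standing assumptions ($\alpha>1+\beta$, $\beta\in(0,1)$) do not exclude $\alpha=2$, you should either restrict to $\alpha<2$ or observe that in the Gaussian case the three consequences you actually use, namely $p_1^\alpha(z)\leq Cp_1^\alpha(z/2)$, $|(p_1^\alpha)'(z)|\leq Cp_1^\alpha(z/2)$, and $p_1^\alpha(w)\leq C p_1^\alpha(u/2)$ for $|w-u|\leq 1$, hold directly, so the same case split goes through with the same constants structure. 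Second, your dichotomy ``all of $|u|,|v|,|w|\leq 3$'' versus ``$|u|\geq 3$'' is not literally exhaustive (e.g.\ $|u|<3\leq|v|$ is missed); this is repaired by invoking the symmetry of the claim in $u$ and $v$ and assuming $|u|\geq|v|$, after which the two cases do cover everything. Neither point affects the substance of the argument.
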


By methods very similar to those used for the proof of the previous lemma, one can get the
following result.

\begin{lemma}
\label{lemdensnew}
\textup{(a)}  For every $\delta
\in[0,1]$,  there exists a constant $C>0$ such that, for all $t>0$ and $x,y\in\mathsf{R}$,
%
\begin{equation}
\label{equt10105} \biggl\llvert \frac{\partial p_{t}^{\alpha}(x)}{\partial x}-\frac{\partial p_{t}^{\alpha}(y)}{\partial y} \biggr\rrvert
\leq C \frac{|x-y|^{\delta
}}{t^{(1+\delta)/\alpha}} \bigl(p_{t}^{\alpha}(x/2)+p_{t}^{\alpha}
(y/2) \bigr).
\end{equation}

\textup{(b)} There exists a constant $C>0$ such that
%
\begin{equation}
\label{equt10107} \biggl\llvert \frac{\partial p_{t}^{\alpha}(x)}{\partial x} \biggr\rrvert \leq C
t^{-1/\alpha} p_{t}^{\alpha}(x/2), \qquad t>0, x\in
\mathsf{R}.
\end{equation}
\end{lemma}

An immediate corollary from the above lemma is as follows.

\begin{corollary}
\label{cor14101}
For every $\delta
\in[ 1,2]$,
%
\begin{eqnarray}
&& \biggl\llvert p_{t}^{\alpha}(x)-p_{t}^{\alpha}(y)
-(x-y)\frac{\partial p_{t}^{\alpha}(y)}{\partial y} \biggr\rrvert
\nonumber\\[-8pt]
\label{equt10106}
\\[-8pt]
\nonumber
&&\qquad \leq C \frac{|x-y|^{\delta
}}{t^{\delta/\alpha}} \bigl(p_{t}^{\alpha}(x/2)+p_{t}^{\alpha}%
(y/2)
\bigr),\qquad t>0,x,y\in\mathsf{R}.
\end{eqnarray}
\end{corollary}

With Lemma~\ref{lemdensnew}(b) at hand, it is easy to check
that if $\beta<(\alpha-1)/2$, then for any $t>0, z\in \R$,
$(s,x)\mapsto \frac{\partial p^{\alpha}_{t-s}(z-x)}{\partial x}\1_{\{s<t\}}$
is in  ${\mathcal L}^p_{\mathrm{loc}}$ for any $p\in (1+\beta, \frac{1+\alpha}{2})$.
Then, using again condition~(\ref{eq11101}), it is easy to show the
following result.

\begin{lemma}
\label{lem11101}
Let $\beta<(\alpha-1)/2$. Then for any fixed $t>0, x\in\R$, the stochastic integral
\[
\int_{(0,t]\times
\mathsf{R}} M \bigl(d(s,y) \bigr) \frac{\partial p_{t-s}^{\alpha}(y-x)}{\partial x}
\]
is well defined.
\end{lemma}

In what follows, we let $\frac{\partial Z^{2}(x)}{\partial x}$ denote the integral
$\int_{(0,t]\times
\mathsf{R}} M  (d(s,y) )
\frac{\partial p_{t-s}^{\alpha}(y-x)}{\partial x}$.

\subsection{Bound for stable processes}

Let
$L=\{L_{t}\dvtx  t\geq0\}$ denote a spectrally positive stable process of index
$\kappa\in(1,2)$.  That is, $L$ is an $\mathsf{R}$-valued
time-homogeneous process with independent increments and with Laplace
transform given by
%
\begin{equation}
\label{Laplace} \mathbf{E} e^{-\lambda L_{t}} = e^{t\lambda^{\kappa}},
\qquad \lambda,t\geq0.
\end{equation}
Let
$\bDelta L_{s}:=L_{s}-L_{s-}>0$ denote the (positive) jumps of
$ L$. The next technical result gives an exponential upper bound for the tail
of $\sup_{0\leq u\leq t}|L_{u}|$ under the condition that all the jumps of $L$ are not
too large.

\begin{lemma}
\label{L3}
There exists a constant $C_{(\fontsize{8.36}{10.36}{\selectfont\ref{eq07033}})}=C_{(\fontsize{8.36}{10.36}{\selectfont\ref{eq07033}})}(\kappa)$ such that
%
\begin{eqnarray}
&& \mathbf{P} \Bigl( \sup_{0\leq u\leq t}|L_{u}|
\mathsf{1}_{ \{\sup_{0\leq v\leq
u}\bDelta L_{v}\leq y \}}\geq x \Bigr)
\nonumber
\\[-8pt]
\label{eq07033}
\\[-8pt]
\nonumber
&& \qquad \leq \biggl(\frac{C_{(\fontsize{8.36}{10.36}{\selectfont\ref{eq07033}})} t}{xy^{\kappa
-1}} \biggr)^{ x/y}+\exp
\biggl\{-\frac{x^{\kappa/(\kappa-1)}}{C_{(\fontsize{8.36}{10.36}{\selectfont\ref{eq07033}})}t^{1/(\kappa-1)}} \biggr\}
\end{eqnarray}
and
%
\begin{equation}
\label{eq07033a} \mathbf{P} \Bigl( \sup_{0\leq u\leq t}L_{u}
\mathsf{1}_{ \{\sup_{0\leq v\leq
u}\bDelta L_{v}\leq y \}}\geq x \Bigr) \leq \biggl(\frac{C_{(\fontsize{8.36}{10.36}{\selectfont\ref{eq07033}})} t}{xy^{\kappa
-1}}
\biggr)^{ x/y}
\end{equation}
for all $t,x,y>0$.
\end{lemma}

This bound \eqref{eq07033a} has been proven in \cite{FMW10} (see Lemma~2.3 there).
To prove the inequality for $\sup_{0\leq u\leq t}|L_{u}|$, one has to combine \eqref{eq07033a}
with Lemma~2.4 from \cite{FMW10}.

\subsection{Analysis of $Z^1$ and $Z^3$}

Consider $Z^1, Z^3$ on the right-hand  side of~(\ref{rep.dens}). Clearly, $Z^1$ is twice differentiable. Noting that $\bar{\eta}_{\mathrm{c}}<2$
for all $\alpha,\beta$, we see that $Z^1$ does not affect the optimal H\"older exponent of $X_t$. As for $Z^3$,
we have the following result.

\begin{lemma}\label{Ztri}
Let $\beta<(\alpha-1)/2$. Then $\mathbf{P}$-a.s.,
 $Z^3(x)$ is differentiable for any $x\in (0,1)$, and the mapping
\[
x\mapsto \frac{d}{dx}Z^3(x), \qquad x\in (0,1),
\]
is, $\mathbf{P}$-a.s., H\"older continuous with any exponent
$\eta<\alpha-1$.
\end{lemma}

\begin{pf}
Using Lemma~2.12 in \cite{FMW10} with $\theta=\delta=1$, we see that $Z^3(x)$ is
differentiable and, furthermore,
\[
\frac{d}{dx}Z^3(x)=a\int_0^t
ds \int_{\mathsf{R}}X_s(dy)\frac{\partial}{\partial
x}p_{t-s}^\alpha(x-y).
\]
Therefore, for any $x_1,x_2\in (0,1)$,
\begin{eqnarray*}
&& \biggl|\frac{d}{dx}Z^3(x_1)-\frac{d}{dx}Z^3(x_2)
\biggr|
\\
&& \qquad \leq |a|\int_0^t ds \int
_{\mathsf{R}}X_s(dy) \biggl|\frac{\partial}{\partial
x_1}p_{t-s}^\alpha(x_1-y)-
\frac{\partial}{\partial x_2}p_{t-s}^\alpha(x_2-y) \biggr|.
\end{eqnarray*}
Applying now Lemma~\ref{lemdensnew} with $\delta<\alpha-1$, we obtain
\begin{eqnarray*}
&& \biggl|\frac{d}{dx}Z^3(x_1)-\frac{d}{dx}Z^3(x_2)
\biggr|
\\
&& \qquad \leq C|a||x_1-x_2|^\delta \int
_0^t ds (t-s)^{-(1+\delta)/\alpha}
\\
&&\qquad \quad{}\times \int_{\mathsf{R}}X_s(dy)
\biggl(p_{t-s}^\alpha \biggl(\frac{x_1-y}{2}
\biggr)+p_{t-s}^\alpha \biggl(\frac{x_2-y}{2} \biggr)
\biggr)
\\
&& \qquad = C|a||x_1-x_2|^\delta\int
_0^t ds (t-s)^{-(1+\delta)/\alpha}
\\
&& \qquad \quad {}\times \bigl(S^\alpha_{2^\alpha(t-s)}X_s(x_1)+S^\alpha_{2^\alpha(t-s)}X_s(x_2)
\bigr)
\\
&& \qquad \leq C|a||x_1-x_2|^\delta\sup
_{s\leq t, x\in
(0,1)}S^\alpha_{2^\alpha(t-s)}X_s(x)\int
_0^t ds s^{-(1+\delta)/\alpha}
\\
&& \qquad = C\alpha(\alpha-1-\delta)^{-1}|a||x_1-x_2|^\delta
\sup_{s\leq t, x\in
(0,1)}S^\alpha_{2^\alpha(t-s)}X_s(x).
\end{eqnarray*}
Taking into account Lemma~2.11 from \cite{FMW10}, which states that
%
\begin{equation}
\label{eq08021}
V:=\sup_{s\leq t, x\in (0,1)}S^\alpha_{2^\alpha(t-s)}X_s(x)<
\infty, \qquad \mathbf{P}\mbox{-a.s.},
\end{equation}
we see that $x\mapsto \frac{d}{dx}Z^3(x)$ is H\"older continuous with the exponent $\delta$.
\end{pf}

Combining this lemma with \cite{FMW10}, Remark~2.13, we obtain

\begin{corollary}\label{Ztricorr}
$\mathbb{P}$-a.s., for any $x\in (0,1)$ we have
%
\begin{equation}
\label{Ztricorr1} H_{Z^3}(x)\geq\alpha{\mathsf{1}}_{\{\bar{\eta}_c>1\}}+{
\mathsf{1}}_{\{\bar{\eta}_c\leq1\}}.
\end{equation}
\end{corollary}

From this corollary and the fact that the right-hand side in (\ref{Ztricorr1})
is not smaller than $\bar{\eta}_c$, we conclude that $Z^3$ does not affect
the multifractal structure of $X_t$ either. More precisely, the
spectrum of singularities of $X_t$ coincides
with that of $Z^2$. Consequently, to prove Theorem~\ref{thmmfractal}, we have to determine Hausdorff dimensions
of the sets
\begin{eqnarray*}
\mathcal{E}_{Z^2,\eta}&:= & \bigl\{x\in (0,1)\dvtx H_{Z^2}(x)=
\eta \bigr\},
\\
\tilde{\mathcal E}_{Z^2,\eta}&:=& \bigl\{x\in (0,1)\dvtx
H_{Z^2}(x)\leq\eta \bigr\},
\end{eqnarray*}
and this is done in the next two sections.

\section{Upper bound for the Hausdorff dimension}
\label{sec4}

The aim of this section is to prove the following proposition.

\begin{proposition}
\label{prop0802}
For every $\eta\in[\eta_c,\overline{\eta}_c)$,
\[
\operatorname{dim}({\mathcal E}_{Z^2,\eta})\leq \operatorname{dim}(\tilde{
\mathcal E}_{Z^2,\eta})\leq (1+\beta) (\eta-\eta_c), \qquad
\mathbf{P}\mbox{-a.s.}
\]
\end{proposition}

We need to introduce an additional notation. In what follows, for any
$\eta\in (\eta_{\mathrm{c}},\bar{\eta}_{\mathrm{c}})\setminus\{1\}$, we fix an
arbitrary small $\gamma=\gamma(\eta)\in(0,\frac{10^{-2}\eta_{\mathrm{c}}}{\alpha})$ such that
\[
\gamma < \cases{\ds \frac{10^{-2}}{\alpha}\min\{1-\eta,\eta\}, &\quad $\mbox{if } \eta
< 1$,\vspace*{2pt}
\cr
\ds\frac{10^{-2}}{\alpha}\min\{\eta-1, 2-\eta\}, &\quad $\mbox{if } \eta> 1$,}
\]
and define
\begin{eqnarray*}
S_\eta &:=& \bigl\{x\in(0,1)\dvtx \mbox{there exists a sequence }
(s_n,y_n)\to(t,x)
\\
&& \hspace*{10pt} \mbox{with }\bDelta X_{s_n} \bigl(
\{y_n\} \bigr)\geq (t-s_n)^{{1}/({1+\beta})-\gamma}|x-y_n|^{\eta-\eta_c}
\bigr\}.
\end{eqnarray*}
To prove the above proposition we have to verify the following two lemmas.

\begin{lemma}
\label{P1}
For every $\eta\in(\eta_c,\overline{\eta}_c)\setminus\{1\}$, we have
\[
\mathbf{P} \bigl(H_{Z^2}(x)\geq\eta-2\alpha\gamma\mbox{ for all }x\in
(0,1)\setminus S_{\eta} \bigr)=1.
\]
\end{lemma}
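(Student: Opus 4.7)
The plan is to obtain, at each $x\in(0,1)\setminus S_\eta$ and simultaneously in $\omega$ outside a single $\mathbf{P}$-null set, a pointwise bound of the form $|Z^2(y)-P_x(y)|\le C(x)|y-x|^{\eta-2\alpha\gamma}$ for $y$ close to $x$, where $P_x$ is the constant $Z^2(x)$ when $\eta<1$ and the affine polynomial $Z^2(x)+(y-x)\,\partial_x Z^2(x)$ when $\eta>1$. The derivative exists by Lemma~\ref{lem:11_10_1}, since $\eta>1$ forces $\bar\eta_c>1$, i.e.\ $\beta<(\alpha-1)/2$. Writing the remainder as a stochastic integral $Z^2(y)-P_x(y)=\int M(d(s,z))\,R_{x,y}(s,z)$, Lemma~\ref{L1old} for $\eta<1$ and Corollary~\ref{cor:14_10_1} for $\eta>1$ give, for a $\delta$ to be optimized in $[0,1]$ or $[1,2]$,
$$|R_{x,y}(s,z)|\le C|y-x|^{\delta}(t-s)^{-\delta/\alpha}\bigl(p^\alpha_{t-s}((z-x)/2)+p^\alpha_{t-s}((z-y)/2)\bigr).$$

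The central idea is to decompose the jump measure $\cN$ along the truncation level $h(s,z):=(t-s)^{1/(1+\beta)-\gamma}|z-x|^{\eta-\eta_c}$ that appears precisely in the definition of $S_\eta$. Writing $M=M^{\le}+M^{>}$ accordingly, the hypothesis $x\notin S_\eta$ furnishes an $\varepsilon=\varepsilon(x,\omega)>0$ such that the $>$-part of $\cN$ is empty on $D_\varepsilon:=\{(s,z):t-s<\varepsilon,\,|z-x|<\varepsilon\}$. Hence
$$Z^2(y)-P_x(y)=\int_{D_\varepsilon}M^{\le}(d(s,z))R_{x,y}(s,z)+\text{compensator correction}+\int_{D_\varepsilon^c}M(d(s,z))R_{x,y}(s,z).$$
The integral over $D_\varepsilon^c$ is a smooth function of $y$ and yields a contribution of order $|y-x|^{\bar\eta_c}$ after integrating the Lemma~\ref{L1old} bound against $X_s(dz)\,ds$ using~(\ref{eq:08_02_1}); the compensator correction from having suppressed $\tilde\cN$ on $\{r>h\}$ is computed from~(\ref{decomp}) via $\int_h^\infty r^{-1-\beta}dr=h^{-\beta}/\beta$ and plugging in the form of $h$ gives a contribution of order $|y-x|^{\eta+o(1)}$.

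The remaining small-jump part is treated by a Bichteler--Jacod/BDG moment inequality with exponent $p\in(1+\beta,2)$, namely
$$\mathbf{E}\Bigl|\int R_{x,y}(s,z)\1_{r\le h}\,r\,\tilde\cN(d(s,z,r))\Bigr|^p\le C\,\mathbf{E}\int|R_{x,y}(s,z)|^p r^p\1_{r\le h}\,\hat\cN(d(s,z,r)),$$
which, after inserting~(\ref{decomp}), performing the $r$-integration $\int_0^{h}r^{p-2-\beta}dr=h^{p-1-\beta}/(p-1-\beta)$ and then the $(s,z)$-integration via~(\ref{eq:08_02_1}), with the explicit form of $h$, yields a bound of order $|y-x|^{p\eta-C\gamma}$. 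A standard Kolmogorov chaining along a dyadic net of pairs $(x,y)\in(0,1)^2$, together with Borel--Cantelli, upgrades this moment estimate to the desired uniform almost-sure H\"older bound with exponent $\eta-2\alpha\gamma$ on $(0,1)\setminus S_\eta$.

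The main obstacle will be the balancing of the three parameters $p\in(1+\beta,2)$, $\delta$ (close to $\eta$ but within the admissible range of Lemma~\ref{L1old} or Corollary~\ref{cor:14_10_1}), and the cushion $\gamma$: they must simultaneously (i) render the BDG estimate finite, (ii) convert the factor $h(s,z)^{p-1-\beta}$ together with $(t-s)^{-p\delta/\alpha}$ and the pointwise density bound into an integrable expression against $X_s(dz)\,ds$, and (iii) recover the sharp exponent $p\eta$ up to the announced loss $2\alpha\gamma$. The case $\eta>1$ is the more delicate one: one must control the first-order Taylor remainder using $\delta>1$ in Corollary~\ref{cor:14_10_1}, whose singular factor $(t-s)^{-\delta/\alpha}$ tightens the constraints on $p$ and on the permissible integration region near $s=t$, and this is what forces the introduction of the dyadic scale decomposition in $(t-s)$ rather than a direct integration.
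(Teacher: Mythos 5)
Your overall architecture (reduce to a pointwise Taylor-type bound at points of $(0,1)\setminus S_\eta$, split the jump measure at the threshold $h(s,z)=(t-s)^{1/(1+\beta)-\gamma}|z-x|^{\eta-\eta_c}$ that defines $S_\eta$, handle the large-jump and compensator parts deterministically, and control the small-jump part probabilistically before chaining over dyadic pairs) matches the paper's skeleton. But the probabilistic core of your argument fails quantitatively. A Bichteler--Jacod bound with $p\in(1+\beta,2)$ can at best give $\mathbf{E}|Z^2(y)-P_x(y)|^p\leq C|y-x|^{p\eta-C\gamma}$, and this is far too weak for the chaining you invoke: Kolmogorov's criterion in one dimension (equivalently, Chebyshev plus a union bound over the $\sim 2^n$ or $2^{2n}$ dyadic pairs at scale $2^{-n}$) requires a moment bound of order $|y-x|^{1+p(\eta-2\alpha\gamma)}$, i.e.\ you must beat the target exponent by an additive $1/p>1/2$ after dividing by $p$. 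Concretely, $\mathbf{P}(|\cdot|>2^{-n(\eta-2\alpha\gamma)})\leq 2^{-n(2p\alpha\gamma-C\gamma)}$, and multiplying by $2^{2n}$ pairs gives a divergent series for every admissible (small) $\gamma$. No choice of $p\in(1+\beta,2)$, $\delta$ and $\gamma$ repairs this; the loss is structural, not a matter of "balancing parameters" as you suggest in your final paragraph. This is precisely why the paper abandons moment estimates: it writes the increment as $L^+_{T_+(t)}-L^-_{T_-(t)}$ for spectrally positive $(1+\beta)$-stable processes (Lemma 2.15 of \cite{FMW10}), bounds the time change by $|x_1-x_2|^{\alpha-\beta-\varepsilon_1}$, bounds the \emph{jumps} of these processes on $A^\varepsilon\cap\{(x_1,x_2)\in\widetilde S_\eta(N,J)\}$ using exactly your truncation idea (Lemma \ref{Step1}), and then applies the stretched-exponential tail bound of Lemma \ref{L3}, yielding a probability of order $(C2^{-\alpha\gamma N})^{|x_1-x_2|^{-\alpha\gamma}}$ -- super-polynomially small, which is what makes the Borel--Cantelli step work.

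Two further points you would need to address even with the correct tail bound. First, your localizing scale $\varepsilon=\varepsilon(x,\omega)$ is random and point-dependent, so the decomposition region $D_\varepsilon$ cannot be fed into a uniform estimate over a deterministic dyadic net; the paper removes this obstruction by proving the statement for each $S_\eta(J)$ (deterministic cut-off $t-s<2^{-J}$) and intersecting over $J$, and you would need some analogous uniformization. Second, for $\eta>1$ you take $P_x$ to involve $\partial_xZ^2(x)$ at the random point $x$ via Lemma \ref{lem:11_10_1}; that lemma only defines the integral for fixed deterministic $x$ (up to a null set depending on $x$), so the derivative at the uncountably many random points of $(0,1)\setminus S_\eta$ must be constructed -- the paper does this by showing $V'(x_n)$ is Cauchy along dyadic approximations $x_n\to x$ (Lemma \ref{lem:Step7}).
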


\begin{lemma}
\label{P2}
For every $\eta\in(\eta_c,\overline{\eta}_c)\setminus\{1\}$, we have
\[
\operatorname{dim}(S_{\eta})\leq (1+\beta) (\eta-\eta_c),
\qquad \mathbf{P}\mbox{-a.s.}
\]
\end{lemma}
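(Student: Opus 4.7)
The plan is to realise $S_\eta$ as a limsup set of random balls indexed by the jumps of $X$ and to bound its Hausdorff measure by first-moment computations against the compensator $\hat\cN$ from~(\ref{decomp}). Setting $\lambda := \frac{1}{1+\beta}-\gamma$, for each jump $(s,y,r)$ of $X$ with $u:=t-s$ define the \emph{influence radius} $\rho(s,y,r) := (r/u^{\lambda})^{1/(\eta-\eta_c)}$. The condition $r\geq u^{\lambda}|x-y|^{\eta-\eta_c}$ is exactly $x\in B(y,\rho(s,y,r))$. Since $X$ a.s.\ has no jump at the fixed time $t$ and the jumps form a countable discrete set, for each $x\in S_\eta$ the witnessing sequence $(s_n,y_n,r_n)\to(t,x)$ consists of infinitely many distinct jumps.

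I would first show that, almost surely, for every $\delta_0>0$ only finitely many jumps satisfy $y\in(-1,2)$ and $\rho(s,y,r)\geq\delta_0$: by~(\ref{decomp}) and Campbell's formula, the conditional expectation of this count given $X$ equals
$$\frac{\varrho}{1+\beta}\,\delta_0^{-(\eta-\eta_c)(1+\beta)}\int_0^t u^{-\lambda(1+\beta)}X_{t-u}((-1,2))\,du<\infty\quad\text{a.s.},$$
the finiteness coming from $\lambda(1+\beta)=1-\gamma(1+\beta)<1$ and $\sup_{s\leq t}|X_s|<\infty$ a.s. Hence the witnessing sequence for every $x\in S_\eta$ contains infinitely many jumps with $\rho_j<\delta_0$; letting $\delta_0$ range over a rational null sequence yields
$$S_\eta\,\subset\,\bigcap_{\delta_0>0}\bigcup_{j:\,\rho_j<\delta_0,\,y_j\in(-1,2)}B(y_j,\rho_j)\quad\text{a.s.}$$

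Next, fix any $d>(1+\beta)(\eta-\eta_c)$ and set $S(\delta):=\sum_{\{j:\rho_j<\delta,\,y_j\in(-1,2)\}}\rho_j^d$. A parallel Campbell-formula computation, valid because the condition $d>(1+\beta)(\eta-\eta_c)$ makes the $r$-integral converge at $0$, gives
$$\e[S(\delta)\mid X]\,\leq\,C_d\,\delta^{\,d-(1+\beta)(\eta-\eta_c)}\sup_{s\leq t}|X_s|,$$
with $C_d$ independent of $\delta$. Since $\delta\mapsto S(\delta)$ is nondecreasing and its conditional expectation tends to $0$, monotone convergence forces $S(\delta)\to 0$ a.s.\ as $\delta\to 0$. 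The balls in the covering above have diameter at most $2\delta$, so $\mathcal H^d_{2\delta}(S_\eta)\leq 2^d S(\delta)\to 0$ a.s., whence $\mathcal H^d(S_\eta)=0$ a.s. Intersecting over a countable sequence $d_n\downarrow(1+\beta)(\eta-\eta_c)$ delivers $\dim(S_\eta)\leq(1+\beta)(\eta-\eta_c)$ almost surely.

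The main obstacle is the passage from the definition of $S_\eta$ (which only provides witnesses with a lower bound on $r$) to an actual covering of $S_\eta$ by small balls. The positivity of $\gamma$ is crucial: it both secures $\lambda(1+\beta)<1$ so that the time integral in the first-moment estimate converges, and makes the exponent of $\delta$ in the moment bound strictly positive.
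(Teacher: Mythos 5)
Your argument is correct in substance but follows a genuinely different route from the paper's. The paper covers $S_\eta$ by the balls (\ref{balls}) attached to jumps in the dyadic blocks $D_{j,n}$, uses Lemma~2.14 of \cite{FMW10} to ensure that for large $j$ only jumps with $n\geq n_0(j)$ occur (so the covering radii tend to $0$), bounds the \emph{number} of jumps per block almost surely via Lemma~1 of \cite{Jaff99} and Borel--Cantelli on the event $\{\sup_{s\leq t}X_s((0,1))\leq N\}$, and then verifies summability of $\sum(\text{count})\cdot(\text{radius})^\theta$ for every $\theta>(1+\beta)(\eta-\eta_c)$, letting $N\to\infty$ at the end. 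You instead work directly with the influence radius $\rho=(r/(t-s)^{\lambda})^{1/(\eta-\eta_c)}$, $\lambda=\frac{1}{1+\beta}-\gamma$, and make two first-moment computations against the compensator (\ref{decomp}): first, that a.s.\ only finitely many jumps with $y\in(-1,2)$ have $\rho\geq\delta_0$, which plays the role of the paper's jump-size bound and ensures every point of $S_\eta$ is covered by balls of radius $<\delta_0$; second, that the expected premeasure satisfies $\e[S(\delta)]\leq C_d\,\delta^{\,d-(1+\beta)(\eta-\eta_c)}$, which replaces the counting and Borel--Cantelli step and forces $\mathcal{H}^d(S_\eta)=0$ for every $d>(1+\beta)(\eta-\eta_c)$. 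This buys a shorter proof with no dyadic bookkeeping and no truncation in $N$; the paper's version produces more explicit a.s.\ control of the jump counts, which is of independent use elsewhere in the argument. One technical repair is needed: your ``conditional expectation given $X$'' is not meaningful, since the jump configuration, and hence both the count and $S(\delta)$, are $X$-measurable, so conditioning on $X$ returns the quantity itself. Replace it either by the plain expectation — using $\e\bigl[\int f\,d\cN\bigr]=\e\bigl[\int f\,d\hat{\cN}\bigr]$ for nonnegative deterministic $f$ together with $\e|X_s|=e^{as}|\mu|<\infty$, so no supremum of the mass process is needed — or by the standard fact that a point process is a.s.\ finite on any set where its compensator is a.s.\ finite. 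With that fix both estimates go through exactly as you computed: $\lambda(1+\beta)=1-\gamma(1+\beta)<1$ makes the time integral converge, $d>(1+\beta)(\eta-\eta_c)$ makes the $r$-integral converge at $0$, and your covering step (the witnesses are infinitely many distinct jumps, all but finitely many of which have small influence radius) is sound and uses the same reading of the definition of $S_\eta$ as the paper's own covering by the balls (\ref{balls}).
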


With Lemmas~\ref{P1} and \ref{P2} in hand, we immediately get the following.

\begin{pf*}{Proof of Proposition~\ref{prop0802}}
It follows easily from Lemma~\ref{P1} that  $\tilde{\mathcal E}_{Z^2,\eta}\subset S_{\eta+2\alpha\gamma+\varepsilon}$
for every $\varepsilon>0$ and every $\eta\in(\eta_c,\overline{\eta}_c)\setminus\{1\}$. Therefore,
\[
\operatorname{dim}(\tilde{\mathcal E}_{Z^2,\eta})\leq \lim
_{\varepsilon\to0}\operatorname{dim}(S_{\eta+2\alpha\gamma+\varepsilon}).
\]
Using Lemma~\ref{P2}, we then get
\[
\operatorname{dim}(\tilde{\mathcal E}_{Z^2,\eta})\leq (1+\beta) (\eta+2
\alpha\gamma-\eta_c),\qquad \mathbf{P}\mbox{-a.s.}
\]
Since $\gamma$ can be chosen arbitrary small, the result for $\eta\neq1$ follows immediately.
The inequality for $\eta=1$ follows from the monotonicity in $\eta$ of the sets $\tilde{\mathcal E}_{Z^2,\eta}$.
\end{pf*}

Let $\varepsilon\in (0,\eta_{\mathrm{c}}/2)$ be arbitrarily small.
We introduce a ``good'' event  $A^{\varepsilon}$ which will be
frequently used throughout the proofs. On this event, with high
probability, $V$ from~(\ref{eq08021}) is bounded by a constant,
and there is a bound on the sizes of jumps. By Lemma~2.14
of~\cite{FMW10}, there exists a constant
$C_{(\fontsize{8.36}{10.36}{\selectfont\ref{eq08022}})}=C_{(\fontsize{8.36}{10.36}{\selectfont\ref{eq08022}})}(\varepsilon,\gamma)$
such that
%
\begin{equation}
\label{eq08022} \mathbf{P} \bigl(|\bDelta X_s|> C_{(\fontsize{8.36}{10.36}{\selectfont\ref{eq08022}})}
(t-s)^{(1+\beta)^{-1}-\gamma} \mbox{ for some } s<t \bigr)\leq \varepsilon/3.
\end{equation}
Then we fix another constant $C_{(\fontsize{8.36}{10.36}{\selectfont\ref{eq08023}})}=C_{(\fontsize{8.36}{10.36}{\selectfont\ref{eq08023}})}(\varepsilon,\gamma)$ such
that
%
\begin{equation}
\label{eq08023} \mathbf{P}(V\leq C_{(\fontsize{8.36}{10.36}{\selectfont\ref{eq08023}})})\geq 1-\varepsilon/3.
\end{equation}
Recall that, by Theorem~1.2 in~\cite{FMW10}, $x\mapsto X_t(x)$ is $\mathbf{P}$-a.s.
H\"older continuous with any exponent less than $\eta_{\mathrm{c}}$. Hence, we can define  a constant
$C_{(\fontsize{8.36}{10.36}{\selectfont\ref{eq080220}})}=C_{(\fontsize{8.36}{10.36}{\selectfont\ref{eq080220}})}(\varepsilon)$ such that\vspace*{-3pt}
%
\begin{equation}
\label{eq080220}
\mathbf{P} \biggl(\sup_{x_1,x_2\in (0,1), x_1\neq x_2}
\frac{|X_t(x_1)-X_t(x_2)|}{|x_1-x_2|^{\eta_{\mathrm{c}}-\varepsilon}}\leq C_{(\fontsize{8.36}{10.36}{\selectfont\ref{eq080220}})} \biggr)\geq 1-\varepsilon/3.
\end{equation}
Now\vspace*{-3pt} we are ready to define
%
\begin{eqnarray}
\hspace*{12pt} A^{\varepsilon}&:= & \bigl\{|\bDelta X_s|\leq C_{(\fontsize{8.36}{10.36}{\selectfont\ref{eq08022}})}
(t-s)^{(1+\beta)^{-1}-\gamma} \mbox{ for all } s<t \bigr\}
\nonumber
\\[-10pt]
\label{goodset}
\\[-10pt]
\nonumber
&&{}\cap \{ V\leq C_{(\fontsize{8.36}{10.36}{\selectfont\ref{eq08023}})}\}\cap \biggl\{\sup
_{x_1,x_2\in (0,1), x_1\neq x_2} \frac{|X_t(x_1)-X_t(x_2)|}{|x_1-x_2|^{\eta_{\mathrm{c}}-\varepsilon}}\leq C_{(\fontsize{8.36}{10.36}{\selectfont\ref{eq080220}})} \biggr\}.
\end{eqnarray}
Clearly, by~(\ref{eq08022}), (\ref{eq08023}) and (\ref{eq080220}),
$\mathbf{P}(A^{\varepsilon})\geq 1-\varepsilon$. See (3.4)
in~\cite{FMW10} for the analogous definition.

The proof of Lemma~\ref{P2} is rather short, so we will give it first.

\begin{pf*}{Proof of Lemma~\ref{P2}}
To every jump $(s,y,r)$ of the measure $\cN$ (in what follows in the paper we will
usually call them
simply ``jumps'') with
\[
(s,y,r)\in D_{j,n}:=\bigl[t-2^{-j}, t-2^{-j-1}\bigr)
\times(0,1)\times\bigl[2^{-n-1}, 2^{-n}\bigr)
\]
we assign the\vspace*{-5pt} ball
%
\begin{equation}
\label{balls} B^{(s,y,r)}:=B \biggl(y, \biggl(\frac{2^{-n}}{(2^{-j-1})^{{1}/({1+\beta})-\gamma}}
\biggr)^{1/(\eta-\eta_c)} \biggr).
\end{equation}
We used here the obvious notation $B(y,\delta)$ for the ball in $\R$ with the center at $y$ and radius $\delta$. Define
 $n_0(j):=j[\frac{1}{1+\beta}-\frac{\gamma}{4}]$. It follows from \eqref{eq08022} and \eqref{goodset} that, on
$A^\varepsilon$, there are no jumps bigger than $2^{-n_0(j)}$ in the time interval $[t-2^{-j}, t-2^{-j-1})$.

It is easy to see that every point from $S_\eta$ is contained in infinitely many balls $B^{(s,y,r)}$.
Therefore, for every $J\geq1$, the\vspace*{-3pt} set
\[
\bigcup_{j\geq J, n\geq 1}\bigcup_{(s,y,r)\in D_{j,n}}
B^{(s,y,r)}
\]
covers $S_\eta$. From \eqref{eq08022} and \eqref{goodset}, we conclude
that, on $A^\varepsilon$, there are no jumps bigger than
$C_{\fontsize{8.36}{10.36}{\selectfont\eqref{eq08022}}}2^{-(j+1)({1}/({1+\beta})-\gamma)}$ in the time
interval $s\in [t-2^{-j},t-2^{-j-1})$ for any $j\geq1$. Define
$n_0(j):=j[\frac{1}{1+\beta}-\frac{\gamma}{4}]$. Clearly, there exists $J_0$
such that for all $j\geq J_0$ there are no jumps bigger than $2^{-n_0(j)}$
in the time interval $[t-2^{-j}, t-2^{-j-1})$. Hence, for every $J\geq J_0$, the
set
\[
S_\eta(J):=\bigcup_{j\geq J, n\geq n_0(j)}\bigcup
_{(s,y,r)\in D_{j,n}} B^{(s,y,r)}
\]
covers $S_\eta$ for every $\omega\in A^\varepsilon$.

It follows from the formula for the compensator that, on the event\break
$ \{\sup_{s\leq t}X_s((0, 1))\leq N \}$,
the intensity of jumps with
$(s,y,r)\in D_{j,n}$ is bounded by
\[
N2^{-j-1}\int_{2^{-n-1}}^{2^{-n}}\varrho
r^{-2-\beta}\,dr=\frac{N\varrho(2^{1+\beta}-1)}{2(1+\beta)}2^{n(1+\beta)-j}=:\lambda_{j,n}.
\]
Therefore, the intensity of jumps with $(s,y,r)\in\bigcup_{n=n_0(j)}^{n_1(j)}D_{j,n}=:\tilde{D}_j$, where
$n_1(j)=j[\frac{1}{1+\beta}+\frac{\gamma}{4}]$, is bounded by
\[
\sum_{n=n_0(j)}^{n_1(j)}\lambda_{j,n}\leq
\frac{N\varrho 2^\beta}{(\beta+1)}2^{j(1+\beta)\gamma/4}=:\Lambda_j.
\]
The number of such jumps does not exceed $2\Lambda_j$ with the probability $1-e^{-(1-2\log2)\Lambda_j}$.
This is immediate from the exponential Chebyshev inequality applied to Poisson distributed random variables.
Analogously, the number of
jumps with $(s,y,r)\in D_{j,n}$ does not exceed $2\lambda_{j,n}$ with the probability
at least $1-e^{-(1-2\log2)\lambda_{j,n}}$.
Since
\[
\sum_j \Biggl(e^{-(1-2\log2)\Lambda_j}+\sum
_{n=n_1(j)}^\infty e^{-(1-2\log2)\lambda_{j,n}} \Biggr)<\infty,
\]
we conclude, applying the Borel--Cantelli lemma that, for almost every $\omega$ from the set
$A^\varepsilon\cap \{\sup_{s\leq t}X_s((0,1))\leq N \}$,
there exists $J(\omega)$ such that for all $j\geq J(\omega)$ and $n\geq n_1(j)$,
the numbers of jumps in $\tilde{D}_j$
and in $D_{j,n}$ are bounded by $2\Lambda_j$ and $2\lambda_{j,n}$, respectively.

The radius of every ball corresponding to the jump in $\tilde{D}_j$ is bounded by
$r_j:=C2^{-({3\gamma})/({4(\eta-\eta_c)})j}$. Thus, one can easily see that
\[
\sum_{j=1}^\infty \Biggl(2
\Lambda_j r_j^\theta+\sum
_{n=n_1(j)}^\infty 2\lambda_{j,n} \biggl(
\frac{2^{-n}}{(2^{-j-1})^{{1}/({1+\beta})-\gamma}} \biggr)^{\theta/(\eta-\eta_c)} \Biggr)<\infty
\]
for every $\theta>(1+\beta)(\eta-\eta_c)$. This yields the desired bound for the Hausdorff dimension
for almost every $\omega\in A^\varepsilon\cap \{\sup_{s\leq t}X_s((0,1))\leq N \}$.
Letting $N\to\infty$ and $\varepsilon\to0$ completes the proof.
\end{pf*}

The remaining part of this section  will be devoted to the proof of Lemma~\ref{P1}.

Since $S_\eta=\bigcap_{J\geq1}S_\eta(J)$,
\begin{eqnarray*}
&& \bigl\{H_{Z^2}(x)\geq \eta-2\alpha\gamma,\forall x\in (0,1)
\setminus S_\eta \bigr\}
\\[-3pt]
&& \qquad =\bigcap_{J\geq1} \bigl\{H_{Z^2}(x)
\geq \eta-2\alpha\gamma,\forall x\in (0,1)\setminus S_\eta(J) \bigr\}.
\end{eqnarray*}
Thus, it suffices to show that
%
\begin{equation}
\label{reduction} \mathbf{P} \bigl(H_{Z^2}(x)\geq \eta-2\alpha\gamma,
\forall x\in (0,1)\setminus S_\eta(J) \bigr)=1
\end{equation}
for every $J\geq1$.

Before we start proving~(\ref{reduction}), let us introduce some further notation. For any $x_1,x_2\in \R,
\eta\in (\eta_{\mathrm{c}},\bar\eta_{\mathrm{c}})$, define
\begin{eqnarray*}
\tilde{p}^{\alpha,\eta}_s(x,y) &:=& \cases{\ds p_{s}^{\alpha}(x)-p_{s}^{\alpha}(y),
& \quad \mbox{if }$\eta\leq 1, s>0$,
\cr
\ds p_{s}^{\alpha}(x)-p_{s}^{\alpha}(y)
- (x-y)\frac{\partial p^{\alpha}_s(y)}{\partial y},& \quad \mbox{if }$\eta\in (1,\bar\eta_{\mathrm{c}}),
s>0$,}
\\
\tilde{p}^{\alpha,\eta,\prime}_s(x,y) &:=& \frac{\partial p^{\alpha}_s(x)}{\partial x}-
\frac{\partial p^{\alpha}_s(y)}{\partial y}, \qquad \eta\in (1,\bar\eta_{\mathrm{c}}),
\\
\tilde{Z}^{2,\eta}_s(x_1, x_2)
&:=& \int_0^s\!\int_{\mathsf{R}} M
\bigl(d(u,y) \bigr) \tilde{p}^{\alpha,\eta}_{t-u}(x_1-y,x_2-y),
\qquad s\in [0,t],
\\
\bDelta\tilde{Z}^{2,\eta}_s(x_1,
x_2)&:=& \tilde{Z}^{2,\eta}_s(x_1,
x_2)-\tilde{Z}^{2,\eta}_{s-}(x_1,
x_2), \qquad s\in (0,t],
\\
\tilde{Z}^{2,\eta,\prime}_s(x_1, x_2)&:=&
\int_0^s\!\int_{\mathsf{R}} M
\bigl(d(u,y) \bigr) \tilde{p}^{\alpha,\eta,\prime}_{t-u}(x_1-y,x_2-y),
\qquad s\in [0,t],
\\
\bDelta\tilde{Z}^{2,\eta,\prime}_s(x_1,
x_2)&:=& \tilde{Z}^{2,\eta,\prime}_s(x_1,
x_2)-\tilde{Z}^{2,\eta,\prime}_{s-}(x_1,
x_2), \qquad\eta\in (1,\bar\eta_{\mathrm{c}}),  s\in (0,t].
\end{eqnarray*}
Also for any $N, J\geq 1$, let
\begin{eqnarray*}
\tilde{S}_{\eta}(N,J) &:=& \bigl\{(x_1,x_2)
\in \R^2\dvtx \exists x_0\in (0,1)\setminus
S_{\eta}(J)
\\
&& \hspace*{3pt} \qquad  \mbox{such that } x_1,x_2\in B
\bigl(x_0, 2^{-N} \bigr) \bigr\}
\end{eqnarray*}
and
\begin{eqnarray*}
\tilde{S}'_{\eta}(J)& =& \bigl\{(x_1,x_2)
\in \R^2\dvtx \exists x_0\in (0,1)\setminus
S_{\eta}(J)
\\
&&  \hspace*{6pt} \mbox{such that } x_1,x_2\in B
\bigl(x_0, 4|x_1-x_2| \bigr) \bigr\}.
\end{eqnarray*}

We split the proof of (\ref{reduction}) into several steps.

\begin{lemma}
\label{Step1}
Fix arbitrary (deterministic) $x_1, x_2\in \R$, and $ \eta\in (\eta_{\mathrm{c}},\bar\eta_{\mathrm{c}})$.
Then for any $N,J\geq 1$, there exists a constant $C_{(\fontsize{8.36}{10.36}{\selectfont\ref{eq07031}})}(J)\geq 1$ such that
\begin{eqnarray}
&& \bigl\llvert \bDelta\tilde{Z}^{2,\eta}_s(x_1,
x_2) \bigr\rrvert {\mathsf 1}_{A^\varepsilon} {\mathsf
1}_{ \{(x_1,x_2)\in\tilde{S}_{\eta}(N,J) \}}
\nonumber
\\[-8pt]
\label{eq07031}
\\[-8pt]
\nonumber
&&\qquad \leq C_{(\fontsize{8.36}{10.36}{\selectfont\ref{eq07031}})}(J)|x_1-x_2|^{\eta_c-\alpha\gamma}2^{-N(\eta-\eta_c)}
\qquad \forall s\leq t.
\end{eqnarray}
\end{lemma}

\begin{pf}
Let $(y,s,r)$ be the point of an arbitrary jump of the measure $\cN$ with $s\leq t$. Then for the corresponding jump
of $\tilde{Z}^{2,\eta}_s(x_1, x_2)$ we get the following bound:
%
\begin{equation}
\label{eq10101} \bigl\llvert \bDelta\tilde{Z}^{2,\eta}_s(x_1,
x_2) \bigr\rrvert \leq r \bigl\llvert \tilde{p}^{\alpha,\eta}_{t-s}(x_1-y,x_2-y)
\bigr\rrvert.
\end{equation}
Now on the event $ \{(x_1,x_2)\in \tilde{S}_{\eta}(N,J) \}$,
there exists a point $x_0\in (0,1)\setminus S_\eta(J)$ such that
$x_1,x_2\in  B (x_0, 2^{-N} )$,  and for
$s\geq t-2^{-J}$ we have
\[
r\leq (t-s)^{{1}/({1+\beta})-\gamma}|y-x_0|^{\eta-\eta_c}.
\]
This and~(\ref{eq10101}) imply that  for $s\geq t-2^{-J}$
\begin{eqnarray}
I &=& I(s,y,x_1,x_2):= \bigl\llvert \bDelta
\tilde{Z}^{2,\eta}_s(x_1, x_2) \bigr
\rrvert {\mathsf 1}_{A^\varepsilon}{\mathsf 1}_{ \{(x_1,x_2)\in \tilde{S}_{\eta}(N,J) \}}
\nonumber
\\[-8pt]
\label{eq10102}
\\[-8pt]
\nonumber
&\leq& (t-s)^{{1}/({1+\beta})-\gamma}|y-x_0|^{\eta-\eta_c} \bigl
\llvert \tilde{p}^{\alpha,\eta}_{t-s}(x_1-y,x_2-y)
\bigr\rrvert.
\end{eqnarray}
Applying\vspace*{1pt} Lemma~\ref{L1old} (if $\eta\leq 1$) or Corollary~\ref{cor14101} (if $\eta>1$)
 with $\delta=\eta-\alpha\gamma$ to $\tilde{p}^{\alpha,\eta}_{t-s}(x_1-y,x_2-y)$, we conclude from~(\ref{eq10102}) that, for
$s\geq t-2^{-J}$,
\begin{eqnarray}
I &\leq& (t-s)^{-(\eta-\eta_c)/\alpha}|y-x_0|^{\eta-\eta_c}|x_1-x_2|^{\eta-\alpha\gamma}\nonumber
\\
&&{}\times \biggl(p^\alpha_{1} \biggl(
\frac{x_1-y}{2(t-s)^{1/\alpha}} \biggr)+p^\alpha_{1} \biggl(
\frac{x_2-y}{2(t-s)^{1/\alpha}} \biggr) \biggr)
\nonumber
\\[-8pt]
\label{eq11102}
\\[-8pt]
\nonumber
&\leq& C_{(\fontsize{8.36}{10.36}{\selectfont\ref{eq11102}})} (t-s)^{-(\eta-\eta_c)/\alpha}|y-x_0|^{\eta-\eta_c}|x_1-x_2|^{\eta-\alpha\gamma}\nonumber
\\
&&{}\times \biggl(\frac{|x_1-y|+|x_2-y|}{(t-s)^{1/\alpha}}\vee 1 \biggr)^{-\alpha-1},\nonumber
\end{eqnarray}
where the last inequality follows from the standard bound
%
\begin{equation}
\label{eq326} p^{\alpha}_1(z)\leq C_{(\fontsize{8.36}{10.36}{\selectfont\ref{eq326}})} \bigl(|z|\vee
1\bigr)^{-\alpha-1}, \qquad z\in \R.
\end{equation}


One can easily check by separating the cases $|x_1-y|+|x_2-y|<(t-s)^{1/\alpha}$ and $|x_1-y|+|x_2-y|\geq (t-s)^{1/\alpha}$ that
\[
|x_1-x_2|^{\eta-\eta_c} \biggl(\frac{|x_1-y|+|x_2-y|}{(t-s)^{1/\alpha}}\vee 1
\biggr)^{-\alpha-1}\leq (t-s)^{(\eta-\eta_c)/\alpha},
\]
and hence
\begin{equation}
\label{eq10103} I \leq C_{(\fontsize{8.36}{10.36}{\selectfont\ref{eq11102}})} |x_1-x_2|^{\eta_c-\alpha\gamma}|y-x_0|^{\eta-\eta_c},
\qquad s\geq t-2^{-J}.
\end{equation}
If $|y-x_0|\leq 2^{-N+1}$, then we obtain the bound
%
\begin{equation}
\label{Step1.1} I\leq 2C_{(\fontsize{8.36}{10.36}{\selectfont\ref{eq11102}})}|x_1-x_2|^{\eta_c-\alpha\gamma}2^{-N(\eta-\eta_c)},
\qquad s\geq t-2^{-J}.
\end{equation}

Now consider the case $|y-x_0|> 2^{-N+1}$. Here, we
treat separately two subcases:
$|y-x_0|\leq (t-s)^{1/\alpha}$ and $|y-x_0|> (t-s)^{1/\alpha}$. First, if
$|y-x_0|\leq (t-s)^{1/\alpha}$,
then it follows from~(\ref{eq11102})
that
%
\begin{eqnarray}
\nonumber
I &\leq & C_{(\fontsize{8.36}{10.36}{\selectfont\ref{eq11102}})} (t-s)^{-(\eta-\eta_c)/\alpha}|y-x_0|^{\eta-\eta_c}|x_1-x_2|^{\eta-\alpha\gamma}
\\
\label{Step1.2} &\leq& C_{(\fontsize{8.36}{10.36}{\selectfont\ref{eq11102}})} |x_1-x_2|^{\eta-\alpha\gamma}
\\
\nonumber
&\leq & C_{(\fontsize{8.36}{10.36}{\selectfont\ref{eq11102}})} |x_1-x_2|^{\eta_c-\alpha\gamma}2^{-(N-1)(\eta-\eta_c)},
\qquad s\geq t-2^{-J}.
\end{eqnarray}
Second, if $|y-x_0|> (t-s)^{1/\alpha}$, then we recall that $|y-x_0|> (t-s)^{1/\alpha}\vee 2^{-N+1}$ and $|x_i-x_0|\leq
2^{-N}, i=1,2$, to get that
%
\begin{equation}
|x_i-y| \geq |x_0-y|/2, \qquad i=1,2.
\end{equation}
Combining this with (\ref{eq11102}), we obtain
\begin{eqnarray}
\hspace*{20pt} I&\leq & C_{(\fontsize{8.36}{10.36}{\selectfont\ref{eq11102}})} (t-s)^{-(\eta-\eta_c)/\alpha}|y-x_0|^{\eta-\eta_c}|x_1-x_2|^{\eta-\alpha\gamma}
\biggl(\frac{|x_0-y|}{(t-s)^{1/\alpha}} \biggr)^{-\alpha-1}
\nonumber
\\
\nonumber
&\leq & C_{(\fontsize{8.36}{10.36}{\selectfont\ref{eq11102}})} |x_1-x_2|^{\eta-\alpha\gamma}
\biggl(\frac{|x_0-y|}{(t-s)^{1/\alpha}} \biggr)^{\eta-\eta_c-\alpha-1}
\nonumber
\\[-8pt]
\label{Step1.3}
\\[-8pt]
\nonumber
&\leq & C_{(\fontsize{8.36}{10.36}{\selectfont\ref{eq11102}})} |x_1-x_2|^{\eta-\alpha\gamma}
\\
\nonumber
&\leq & C_{(\fontsize{8.36}{10.36}{\selectfont\ref{eq11102}})} |x_1-x_2|^{\eta_c-\alpha\gamma}2^{-(N-1)(\eta-\eta_c)},
\qquad s\geq t-2^{-J}.
\end{eqnarray}

Finally, we consider the jumps $(y,s,r)$ with $s<t-2^{-J}$. On the event
$A^\varepsilon$,
\[
r\leq (t-s)^{{1}/({1+\beta})-\gamma}.
\]
Using Lemma~\ref{L1old} (or  Corollary~\ref{cor14101}) with $\delta=\eta-\alpha\gamma$ once again, we see
from~(\ref{eq10101}) that
\begin{eqnarray}
\nonumber
\hspace*{20pt} I &\leq & C_{(\fontsize{8.36}{10.36}{\selectfont\ref{Step1.4}})}|x_1-x_2|^{\eta-\alpha\gamma}(t-s)^{-(\eta-\eta_c)/\alpha}
\nonumber
\\
\label{Step1.4}&\leq & C_{(\fontsize{8.36}{10.36}{\selectfont\ref{Step1.4}})} 2^{J(\eta-\eta_c)/\alpha}|x_1-x_2|^{\eta-\alpha\gamma}
\\
&\leq & C_{(\fontsize{8.36}{10.36}{\selectfont\ref{Step1.4}})} 2^{J(\eta-\eta_c)/\alpha}|x_1-x_2|^{\eta_c-\alpha\gamma}2^{-(N-1)(\eta-\eta_c)},
\qquad s< t-2^{-N}.
\nonumber
\end{eqnarray}
Combining (\ref{Step1.1})--(\ref{Step1.4}), we get the desired result.
\end{pf}

By a similar argument, we can get the following result.

\begin{lemma}
\label{lemStep5}
Let $\bar\eta_c>1$.
Fix arbitrary (deterministic) $x_1, x_2\in \R$, and $ \eta\in (1,\bar\eta_{\mathrm{c}})$. Then for any $J\geq 1$, there exists a constant
$C_{(\fontsize{8.36}{10.36}{\selectfont\ref{eq07032}})}(J)$ such that
\begin{eqnarray}
&& \bigl\llvert \bDelta\tilde{Z}^{\eta,2,\prime}_s(x_1,
x_2) \bigr\rrvert {\mathsf 1}_{A^\varepsilon} {\mathsf
1}_{ \{(x_1,x_2)\in\tilde{S}'_\eta(J) \}}
\nonumber
\\[-8pt]
\label{eq07032}
\\[-8pt]
\nonumber
&& \qquad \leq C_{(\fontsize{8.36}{10.36}{\selectfont\ref{eq07032}})}(J)|x_1-x_2|^{\eta-1-\alpha\gamma}
\qquad \forall s\leq t.
\end{eqnarray}
\end{lemma}

Having an upper bound for absolute values of the jumps of $\tilde{Z}^2(x_1, x_2)$, we can give some
estimate for  $\tilde{Z}^2_t(x_1, x_2)$ itself.

\begin{lemma}
\label{Step2}
Fix arbitrary (deterministic) $x_1,x_2\in (0,1)$, and $\eta\in (\eta_{\mathrm{c}},\bar\eta_{\mathrm{c}})$.
\begin{longlist}[(a)]
\item[(a)]
Then there exists a constant $C_{(\fontsize{8.36}{10.36}{\selectfont\ref{eq07034}})}$, such that
for any $N,J\geq 1$,
%
\begin{eqnarray}
&& \mathbf{P} \bigl(\bigl|\tilde{Z}^{2,\eta}_t(x_1,
x_2)\bigr|\geq 2C_{(\fontsize{8.36}{10.36}{\selectfont\ref{eq07031}})}(J)|x_1-x_2|^{\eta_c-2\alpha\gamma}
2^{-N(\eta-\eta_c)}, A^\varepsilon,\nonumber \\
&&{}\hspace*{168pt}\quad (x_1,x_2)\in
\tilde{S}_\eta(N,J) \bigr)
\nonumber
\\[-8pt]
\label{eq07034}
\\[-8pt]
&& \qquad \leq \bigl(C_{(\fontsize{8.36}{10.36}{\selectfont\ref{eq07034}})}2^{-\alpha\gamma N} \bigr)^{|x_1-x_2|^{-\alpha\gamma}}.
\nonumber
\end{eqnarray}
\item[(b)] Let $\bar\eta_c>1$ and assume $\eta\in (1,\bar\eta_{\mathrm{c}})$.
Then there exists a constant $C_{(\fontsize{8.36}{10.36}{\selectfont\ref{eq07035}})}$, such that
for any $J\geq 1$,
\begin{eqnarray}
\qquad&&\mathbf{P} \bigl(\bigl|\tilde{Z}^{2,\eta,\prime}_t(x_1,
x_2)\bigr|\geq 2C_{(\fontsize{8.36}{10.36}{\selectfont\ref{eq07032}})}(J)|x_1-x_2|^{\eta-1-2\alpha\gamma},
A^\varepsilon, (x_1,x_2)\in\tilde{S}'_\eta(J)
\bigr)
\nonumber
\\[-8pt]
\label{eq07035} \\[-8pt]
\nonumber
&& \qquad \leq \bigl(C_{(\fontsize{8.36}{10.36}{\selectfont\ref{eq07035}})}|x_1-x_2|^{\alpha\gamma}
\bigr)^{|x_1-x_2|^{-\alpha\gamma}}.
\end{eqnarray}
\end{longlist}
\end{lemma}

\begin{pf}
(a) According to Lemma~2.15 from \cite{FMW10}, there exist spectrally positive $(1+\beta)$-stable processes $L^+$ and $L^-$
such that
%
\begin{equation}
\label{eq17101} \tilde{Z}^{2,\eta}_s(x_1,
x_2)=L^+_{T^{\eta}_+(s)}-L^-_{T^{\eta}_-(s)},\qquad s\leq t,
\end{equation}
where
%
\begin{equation}
\label{eq17102}
\qquad T^{\eta}_\pm(s) := \int_0^s
du \int_{\mathbb{R}}X_u(dy) \bigl( \bigl(
\tilde{p}^{\alpha,\eta}_{t-u}(x_1-y,x_2-y)
\bigr)^{\pm} \bigr)^{1+\beta},\qquad 0\leq s\leq t.\hspace*{-14pt}
\end{equation}
(Note that $L^+, L^-$ also depend on $\eta$;  however, we omit the corresponding superindex $\eta$ to simplify the notation.)
Therefore, we get
%
\begin{eqnarray}
\nonumber
&& \mathbf{P} \bigl(\bigl|\tilde{Z}^{2,\eta}_t(x_1,
x_2)\bigr|\geq 2C_{(\fontsize{8.36}{10.36}{\selectfont\ref{eq07031}})}(J)|x_1-x_2|^{\eta_c-2\alpha\gamma}2^{-N(\eta-\eta_c)},
A^\varepsilon,\\[-3pt]
\nonumber
&&\hspace*{91pt} \qquad\qquad\qquad\qquad \bigl(x_1,x_2\in\tilde{S}_\eta(N,J)
\bigr) \bigr)
\\
&& \qquad  \leq \mathbf{P} \bigl(\bigl|L^+_{T^{\eta}_+(s)}\bigr|\geq
C_{(\fontsize{8.36}{10.36}{\selectfont\ref{eq07031}})}(J)|x_1-x_2|^{\eta_c-2\alpha\gamma}2^{-N(\eta-\eta_c)},
A^\varepsilon,
\nonumber
\\[-9pt]
\label{eq10104}\\[-12pt]
\nonumber
&&\hspace*{92pt} \qquad\qquad\qquad\qquad  \bigl(x_1,x_2\in\tilde{S}_\eta(N,J)
\bigr) \bigr)
\\
\nonumber
&&\qquad \quad {}+ \mathbf{P} \bigl(\bigl|L^-_{T^{\eta}_{-}(s)}\bigr|\geq
C_{(\fontsize{8.36}{10.36}{\selectfont\ref{eq07031}})}(J)|x_1-x_2|^{\eta_c-2\alpha\gamma}2^{-N(\eta-\eta_c)},
A^\varepsilon,\\[-3pt]
&& \nonumber \hspace*{106pt} \qquad\qquad\qquad\qquad\bigl(x_1,x_2\in\tilde{S}_\eta(N,J)
\bigr) \bigr).
\end{eqnarray}
By going through the derivation of (3.43) in~\cite{FMW11}, one can easily get that in our setting,
on the event $A^\varepsilon$, for any $\eta\in (\eta_{\mathrm{c}},\bar\eta_{\mathrm{c}})$ and any
$\varepsilon_{1}\in (0,\alpha\beta\gamma)$, there exists a~constant
 $C_{(\fontsize{8.36}{10.36}{\selectfont\ref{eqtime1}})}=C_{(\fontsize{8.36}{10.36}{\selectfont\ref{eqtime1}})}(\varepsilon,\varepsilon_1,\eta)$ such that
%
\begin{equation}
\label{eqtime1} T^{\eta}_{\pm}(t) \leq C_{(\fontsize{8.36}{10.36}{\selectfont\ref{eqtime1}})}|x_1-x_2|^{\alpha-\beta-\varepsilon_{1}}=:
\hat{T}(x_1,x_2).
\end{equation}
From this bound, Lemmas \ref{L3} and \ref{Step1} we get
\begin{eqnarray*}
&&\mathbf{P} \bigl(\bigl|L^\pm_{T^{\eta}_{\pm}(t)}\bigr|\geq C_{(\fontsize{8.36}{10.36}{\selectfont\ref{eq07031}})}(J)|x_1-x_2|^{\eta_c-2\alpha\gamma}2^{-N(\eta-\eta_c)},A^\varepsilon,
\bigl(x_1,x_2\in\tilde{S}_\eta(N,J) \bigr)
\bigr)
\\[-2pt]
&& \qquad \leq \mathbf{P} \Bigl(\bigl|L^\pm_{T^{\eta}_{\pm}(t)}\bigr|\geq
C_{(\fontsize{8.36}{10.36}{\selectfont\ref{eq07031}})}(J)|x_1-x_2|^{\eta_c-2\alpha\gamma}2^{-N(\eta-\eta_c)},A^\varepsilon,
\\[-2pt]
&& \hspace*{25pt}\qquad \quad \sup_{s\leq T^{\eta}_{\pm}} \bDelta L^\pm_s
\leq C_{(\fontsize{8.36}{10.36}{\selectfont\ref{eq07031}})}(J)|x_1-x_2|^{\eta_c-\alpha\gamma}2^{-N(\eta-\eta_c)}
\Bigr)
\\[-2pt]
&&\qquad\leq \mathbf{P} \Bigl(\sup_{0\leq s\leq \hat{T}(x_1,x_2)}\bigl|L^\pm_{s}\bigr|
\mathsf{1} \Bigl\{\sup_{0\leq v\leq
s}\bDelta L^\pm_{v}
\leq C_{(\fontsize{8.36}{10.36}{\selectfont\ref{eq07031}})}(J)|x_1-x_2|^{\eta_c-\alpha\gamma}2^{-N(\eta-\eta_c)}
\Bigr\}
\\[-2pt]
&& \hspace*{148pt}\qquad\quad \geq C_{(\fontsize{8.36}{10.36}{\selectfont\ref{eq07031}})}(J)|x_1-x_2|^{\eta_c-2\alpha\gamma}2^{-N(\eta-\eta_c)}
\Bigr)
\\
&& \qquad \leq \biggl(\frac{ C_{(\fontsize{8.36}{10.36}{\selectfont\ref{eq07033}})} C_{(\fontsize{8.36}{10.36}{\selectfont\ref{eqtime1}})}|x_1-x_2|^{\alpha-\beta-\varepsilon_1}}{|x_1-x_2|^{-\alpha\gamma}
 (C_{(\fontsize{8.36}{10.36}{\selectfont\ref{eq07031}})}(J)|x_1-x_2|^{\eta_c-\alpha\gamma}2^{-N(\eta-\eta_c)})^{1+\beta} } \biggr)^{|x_1-x_2|^{-\alpha\gamma}}
\\
&& \qquad \quad {}+\exp \biggl\{-\frac{(C_{(\fontsize{8.36}{10.36}{\selectfont\ref{eq07031}})}(J)|x_1-x_2|^{\eta_c-2\alpha\gamma}2^{-N(\eta-\eta_c)})^{(1+\beta)/\beta}}{
C_{(\fontsize{8.36}{10.36}{\selectfont\ref{eq07033}})} (C_{(\fontsize{8.36}{10.36}{\selectfont\ref{eqtime1}})}|x_1-x_2|^{\alpha-\beta-\varepsilon_{1}})^{1/\beta}} \biggr\}
\\
&& \qquad = \biggl(C\frac{|x_1-x_2|^{\alpha\gamma(2+\beta)-\varepsilon_1+1}}{
2^{-N(\eta-\eta_c)(1+\beta)} } \biggr)^{|x_1-x_2|^{-\alpha\gamma}}
\\
&&\qquad \quad {}+\exp \bigl\{-c|x_1-x_2|^{-1/\beta+\varepsilon_1/\beta-2\alpha\gamma(1+\beta)/\beta}2^{-N(\eta-\eta_c)(1+\beta)/\beta}
\bigr\}
\\
&& \qquad =O \bigl( \bigl(2^{-\alpha\gamma N} \bigr)^{|x_1-x_2|^{-\alpha\gamma}} \bigr),
\end{eqnarray*}
where the last equality follows since
$(\eta-\eta_c)(1+\beta)\leq 1, \varepsilon_1\leq \alpha\beta\gamma$, $|x_1-x_2|
\leq 2^{-N+1}$.
(We omit here some elementary arithmetic calculations.)

The claim follows now from~(\ref{eq10104}).

(b) The proof goes along the similar lines.
\end{pf}

\begin{lemma}
\label{lemStep6}
Let $J\geq 1$, $\bar\eta_{\mathrm{c}}>1$, $\eta\in (1,\bar\eta_{\mathrm{c}})$.
For almost every $\omega\in A^\varepsilon$, there exists $N_2=N_2(\omega)$ such that for
all $n\geq  N_2$, and
\[
(x_1,x_2)\in\tilde{S}'_\eta(J)
\cap \bigl\{ \bigl(i2^{-n},j2^{-n} \bigr), i,j\in\Z, |i-j|=1
\bigr\}
\]
the following holds:
\[
\bigl|\tilde{Z}^{2,\eta,\prime}_t(x_1, x_2)\bigr|
\leq 2C_{(\fontsize{8.36}{10.36}{\selectfont\ref{eq07032}})}(J)|x_1-x_2|^{\eta-1-2\alpha\gamma}.
\]
\end{lemma}

\begin{pf}
Define
\begin{eqnarray*}
M_{n}&:=&\max \bigl\{\bigl|\tilde{Z}^{2,\eta,\prime}_t(x_1,
x_2)\bigr|\dvtx (x_1,x_2)\in \bigl\{
\bigl(i2^{-n},j2^{-n} \bigr),i,j\in\Z, |i-j|=1 \bigr\}
\\
&&\hspace*{223pt}{}\cap (0,1)\cap \tilde{S}'_\eta(J) 
 \bigr\}.
\end{eqnarray*}
Applying Lemma~\ref{Step2}(b),
we obtain
\[
\mathbf{P} \bigl(M_{n}\geq 2C_{(\fontsize{8.36}{10.36}{\selectfont\ref{eq07032}})}(J)2^{-n(\eta-1-2\alpha\gamma)};A^\varepsilon
\bigr) \leq 2^{n} \bigl(C_{(\fontsize{8.36}{10.36}{\selectfont\ref{eq07035}})}2^{-n\alpha\gamma}
\bigr)^{2^{n\alpha\gamma}}.
\]
Let
\[
A_N:= \bigl\{M_{n}\geq 2C_{(\fontsize{8.36}{10.36}{\selectfont\ref{eq07032}})}(J)2^{-n(\eta-1-2\alpha\gamma)}
\mbox{ for some }n\geq N \bigr\}.
\]
It is clear that
\begin{eqnarray*}
\sum_{N=1}^\infty\mathbf{P}
\bigl(A_N\cap A^\varepsilon \bigr)&\leq& \sum
_{N=1}^\infty\sum_{n=N}^\infty
\mathbf{P} \bigl(M_{n}\geq 2C_{(\fontsize{8.36}{10.36}{\selectfont\ref{eq07032}})}(J)2^{-n(\eta-1-2\alpha\gamma)};A^\varepsilon
\bigr)
\\
&\leq & \sum_{N=1}^\infty\sum
_{n=N}^\infty 2^{n} \bigl(C_{(\fontsize{8.36}{10.36}{\selectfont\ref{eq07035}})}2^{-n\alpha\gamma}
\bigr)^{2^{n\alpha\gamma}}
\\
&\leq & \sum_{N=1}^\infty\sum
_{n=N}^\infty 2^{n} \bigl(C_{(\fontsize{8.36}{10.36}{\selectfont\ref{eq07035}})}
2^{-n\alpha\gamma} \bigr)^{2^{n\alpha\gamma}}
\\
&\leq & C \sum_{N=1}^\infty2^{-\alpha\gamma N}<
\infty
\end{eqnarray*}
and we are done by the Borel--Cantelli lemma.
\end{pf}

\begin{lemma}
\label{Step3}
Let $J\geq 1$, $\eta\in (\eta,\bar\eta_{\mathrm{c}})$.
For almost every $\omega\in A^\varepsilon$, there exists $N_1=N_1(\omega)$ such that for
all 
$n\geq N\geq N_1$,
\[
(x_1,x_2)\in\tilde{S}_\eta(N,J)\cap \bigl
\{ \bigl(i2^{-n},j2^{-n} \bigr), i,j\in\Z \bigr\}
\]
with $|x_1-x_2|\leq 2^{-\log^2n}$
we have the inequality
\[
\bigl|\tilde{Z}^{2,\eta}_t(x_1, x_2)\bigr|
\leq 2C_{(\fontsize{8.36}{10.36}{\selectfont\ref{eq07031}})}(J)|x_1-x_2|^{\eta_c-2\alpha\gamma}2^{-N(\eta-\eta_c)}.
\]
\end{lemma}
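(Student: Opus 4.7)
The plan is to mimic the argument for Lemma~\ref{lem:Step6}, but with a double indexing over both the grid scale $n$ and the window scale $N$, exploiting the restriction $|x_1-x_2|\le 2^{-\log^2 n}$ to drive a sufficiently sharp union bound. For each pair of integers $n\ge N\ge 1$, I would introduce
$$M_{n,N}:=\sup\Bigl\{|\widetilde{Z}^{2,\eta}_t(x_1,x_2)|:\ (x_1,x_2)\in \widetilde{S}_\eta(N,J)\cap(2^{-n}\Z)^2\cap(0,1)^2,\ |x_1-x_2|\le 2^{-\log^2 n}\Bigr\}$$
and aim to show that, for $N$ large enough, $M_{n,N}$ exceeds the bound stated in the lemma with a probability so small that the Borel--Cantelli lemma gives $N_1(\omega)$ directly. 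A preliminary combinatorial step bounds the number of grid pairs entering the supremum: there are at most $2^n$ choices of $x_1\in 2^{-n}\Z\cap(0,1)$, and for each such $x_1$ at most $2\cdot 2^{n-\log^2 n}+1$ choices of $x_2$ satisfying the gap restriction, so the supremum is taken over at most $C\,2^{2n-\log^2 n}$ pairs.

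The second ingredient is Lemma~\ref{Step2}(a) applied to a single pair: it gives
$$\mathbf{P}\Bigl(|\widetilde{Z}^{2,\eta}_t(x_1,x_2)|\ge 2C_{(\ref{eq:07_03_1})}(J)|x_1-x_2|^{\eta_c-2\alpha\gamma}2^{-N(\eta-\eta_c)},\ A^\varepsilon,\ (x_1,x_2)\in\widetilde{S}_\eta(N,J)\Bigr)\le \bigl(C_{(\ref{eq:07_03_4})}\,2^{-\alpha\gamma N}\bigr)^{|x_1-x_2|^{-\alpha\gamma}}.$$
Since $|x_1-x_2|\le 2^{-\log^2 n}$ forces $|x_1-x_2|^{-\alpha\gamma}\ge 2^{\alpha\gamma\log^2 n}$, fixing $N_0$ with $C_{(\ref{eq:07_03_4})}\,2^{-\alpha\gamma N_0}\le e^{-1}$ renders the right-hand side at most $\exp\!\bigl(-2^{\alpha\gamma\log^2 n}\bigr)$ uniformly in $N\ge N_0$. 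A union bound over the at most $C\,2^{2n-\log^2 n}$ grid pairs then yields, for all $n\ge N\ge N_0$,
$$\mathbf{P}\bigl(M_{n,N}\ \text{exceeds the stated bound},\ A^\varepsilon\bigr)\le C\,2^{2n-\log^2 n}\exp\!\bigl(-2^{\alpha\gamma\log^2 n}\bigr),$$
which is doubly-exponentially small in $n$ and therefore summable over all $(n,N)$ with $n\ge N\ge N_0$. The Borel--Cantelli lemma then delivers, for almost every $\omega\in A^\varepsilon$, a finite $N_1(\omega)\ge N_0$ beyond which no such pair violates the stated bound, completing the argument.

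The one delicate point to flag is the balance between the potentially huge $2^{2n}$-sized union bound and the single-pair tail estimate. The cutoff $|x_1-x_2|\le 2^{-\log^2 n}$ is tuned precisely so that the forced lower bound $|x_1-x_2|^{-\alpha\gamma}\ge 2^{\alpha\gamma\log^2 n}$ is super-polynomial in $n$; exponentiating the $O(1)$-small base from Lemma~\ref{Step2}(a) to this super-polynomial power produces double-exponential decay, which trivially defeats the $2^{2n-\log^2 n}$ counting factor. Without this cutoff the union bound would swallow the tail estimate, so the constraint $|x_1-x_2|\le 2^{-\log^2 n}$ is not cosmetic but essential, and it explains why the conclusion of the lemma is restricted to grid pairs satisfying this inequality (a sharper version for arbitrary pairs will presumably be obtained later by chaining across scales).
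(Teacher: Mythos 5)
Your proposal is correct and follows essentially the same route as the paper: discretize over the dyadic grid, apply Lemma~\ref{Step2}(a) to each pair, use the cutoff $|x_1-x_2|\le 2^{-\log^2 n}$ to make the per-pair tail $(C_{(\ref{eq:07_03_4})}2^{-\alpha\gamma N})^{2^{\alpha\gamma\log^2 n}}$ beat the $O(2^{2n})$ union bound, and conclude by Borel--Cantelli. The only (immaterial) difference is bookkeeping: the paper groups the bad events into $A_N=\bigcup_{n\ge N}A_{n,N}$ and keeps the $N$-dependence in the summable bound $\sum_N 2^{-\alpha\gamma N}$, whereas you fix a threshold $N_0$ and sum over the pairs $(n,N)$ directly.
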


\begin{pf}
Define events
\[
B(x_1,x_2):= \bigl\{\tilde{Z}^{2,\eta}_t(x_1,
x_2) \geq 2C_{(\fontsize{8.36}{10.36}{\selectfont\ref{eq07031}})}(J)|x_1-x_2|^{\eta_c-2\alpha\gamma}2^{-N(\eta-\eta_c)}
\bigr\},
\]
and
\[
A_{n,N} := \mathop{\mathop{\mathop{\bigcup}_{x_1,x_2\in\{i2^{-n},i\in\Z\}}}_{\cap (0,1)\cap \tilde{S}_{\eta}(N,J)\dvtx}}_{\{|x_1-x_2|\leq 2^{-\log^2n}\}
\}}
B(x_1,x_2).
\]

Applying Lemma~\ref{Step2}(a), we obtain
\[
\mathbf{P} \bigl(A_{n,N}\cap A^\varepsilon \bigr) \leq
2^{2n} \bigl(C_{(\fontsize{8.36}{10.36}{\selectfont\ref{eq07034}})} 2^{-\alpha\gamma N} \bigr)^{2^{\alpha\gamma\log^2n}}.
\]
Let
\[
A_N:= \bigcup_{n\geq N} A_{n,N}.
\]
It is clear that
\begin{eqnarray*}
\sum_{N=1}^\infty\mathbf{P}
\bigl(A_N\cap A^\varepsilon \bigr)&\leq & \sum
_{N=1}^\infty\sum_{n=N}^\infty
\mathbf{P} \bigl(A_{n,N}\cap A^\varepsilon \bigr)
\\
&\leq & \sum_{N=1}^\infty\sum
_{n=N}^\infty 2^{2n} \bigl(
C_{(\fontsize{8.36}{10.36}{\selectfont\ref{eq07034}})} 2^{-\alpha\gamma N} \bigr)^{2^{\alpha\gamma \log^2n}}
\\
&\leq & C \sum_{N=1}^\infty2^{-\alpha\gamma N}<
\infty
\end{eqnarray*}
and we are done by the Borel--Cantelli lemma.
\end{pf}

\begin{lemma}
\label{lemStep7}
Let $J\geq 1$, $\bar\eta_{\mathrm{c}}>1$, $\eta\in (1,\bar\eta_{\mathrm{c}})$. Fix an
integer
$k_0>\max \{1+\frac{1}{\alpha-1-\beta},3\}$.
For almost every $\omega\in A^\varepsilon$
and for
all $x\in (0,1)\setminus S_\eta(J)$, there exists $V'(x)=V'(x,\omega)$ and $C_{(\fontsize{8.36}{10.36}{\selectfont\ref{eq14102}})}(J,\omega)$ such that
%
\begin{eqnarray}
&& \bigl|Z^{2}(y)-Z^{2}(x)-(y-x)V'(x)\bigr|
\nonumber
\\[-8pt]
\label{eq14102}
\\[-8pt]
\nonumber
&& \qquad \leq C_{(\fontsize{8.36}{10.36}{\selectfont\ref{eq14102}})}(J)|y-x|^{\eta-2\alpha\gamma}\qquad
\forall y\in B \bigl(x, 2^{-N_3} \bigr),
\end{eqnarray}
where
\[
N_3=\max\bigl\{N_2(\omega),N_1(\omega),
\log_2\bigl(\bigl|V'(x)\bigr|\bigr), (k_0)^{10}
\bigr\}+2
\]
and $N_2, N_1$ are from Lemmas~\ref{lemStep6} and \ref{Step3}.
\end{lemma}

\begin{remark}
$Z^{2}(x)$ [similarly for $Z^{2}(y)$] at a random point $x$ is defined via  $Z^{2}(x)=X_t(x)-Z^1(x)-Z^3(x)$, where
all the terms on the right-hand side are well defined.
\end{remark}

\begin{remark}
The lemma shows that $V'(x)$ is in fact a spatial derivative of $Z^{2}(x)$ at the point $x$.
\end{remark}

\begin{pf*}{Proof of Lemma~\ref{lemStep7}}
First, we will define $V'(y)$ for fixed points $y$.
For any $y\in \R$, let
\[
V'(y):= \int_0^t\!\int
_{\mathsf{R}} M \bigl(d(u,z) \bigr) \frac{\partial}{\partial y}
p^{\alpha}_{t-u}(y-z).
\]
Let $x$ and $\omega$ be as in the statement of the lemma.
For any $n\geq 1$,
take $x_n\in \{i2^{-n},i\in\Z\}$ satisfying the following conditions:
%
\begin{equation}
\label{eq14103} |x_n-x|\leq 2^{-n},\qquad
|x_n-x_{n+1}|=2^{-n-1} \qquad\forall n\geq 1.
\end{equation}
Applying Lemma~\ref{lemStep6}, we get for every $n\geq N_3$ the bound
\begin{eqnarray*}
\nonumber
\bigl|V'(x_n)-V'(x_{n+1})\bigr|&=&
\bigl|\tilde{Z}^{2,\eta,\prime}_t(x_n, x_{n+1})\bigr|
\\
\nonumber
&\leq & 2C_{(\fontsize{8.36}{10.36}{\selectfont\ref{eq07032}})}(J)2^{-n(\eta-1-2\alpha\gamma)}.
\end{eqnarray*}
Then for any $m>n\geq N_3$ we have
%
\begin{eqnarray}
\bigl|V'(x_n)-V'(x_{m})\bigr| &\leq &
\sum_{k=n}^{m-1}\bigl| V'(x_k)-V'(x_{k+1})\bigr|
\nonumber
\\[-8pt]
\label{11031}
\\[-8pt]
\nonumber
&\leq & C_{(\fontsize{8.36}{10.36}{\selectfont\ref{11031}})}(J)2^{-n(\eta-1-2\alpha\gamma)}.
\end{eqnarray}
This implies that $\{V'(x_n)\}_{n\geq 1}$ is a Cauchy sequence and we denote the limit by $V'(x)$. Moreover, it is
easy to check that
%
\begin{equation}
\label{eq14104} \bigl|V'(x_n)-V'(x)\bigr| \leq
C_{(\fontsize{8.36}{10.36}{\selectfont\ref{11031}})}(J)2^{-n(\eta-1-2\alpha\gamma)},\qquad n\geq N_3.
\end{equation}

Now let us check~(\ref{eq14102}).  Let
\[
y\in B \bigl(x, 2^{-N_3} \bigr)\setminus\{x\}.
\]
Then we can fix an integer $N^*\geq N_3$ such that
%
\begin{equation}
\label{13111} 2^{-N^*-1}\leq |x-y| \leq 2^{-N^*}.
\end{equation}

Fix a sequence $\{x_{n}\}_{n\geq 1}$
satisfying~(\ref{eq14103}), and $\{y_{n}\}_{n\geq 1}$ satisfying the same condition with $y$ instead of $x$.
Then for any $n\geq N_3$ we have
%
\begin{eqnarray}
\nonumber
&& \bigl|Z^{2}(y)-Z^{2}(x)-(y-x)V'(x)\bigr|
\\
\nonumber
&& \qquad \leq \bigl|Z^{2}(y_n)-Z^{2}(x_n)-(y_n-x_n)V'(x_n)\bigr|
\\
&& \label{131112}\qquad \quad{}+\bigl|Z^{2}(y_n)-Z^{2}(y)\bigr|+
\bigl|Z^{2}(x_n)-Z^{2}(x)\bigr|
\\
\nonumber
&&\qquad\quad{}+|y_n-y|\times \bigl|V'(x)\bigr|+|x_n-x|
\times \bigl|V'(x)\bigr|
\\
&&\qquad\quad{}+|y_n-x_n|\times \bigl|V'(x)-V'(x_n)\bigr|.\nonumber
\end{eqnarray}
In what follows fix
%
\begin{equation}
\label{13112} n=k_0N^*.
\end{equation}

Then we have
%
\begin{eqnarray}
\nonumber
&& |y_n-y|\times \bigl|V'(x)\bigr|+|x_n-x|
\times \bigl|V'(x)\bigr|
\\
\nonumber
&& \qquad \leq 2\cdot 2^{-k_0N^*}\bigl|V'(x)\bigr|
\\
&& \qquad \leq 2\cdot 2^{-N^*(\eta-2\alpha\gamma)}2^{-N^*}\bigl|V'(x)\bigr|
\qquad (\mbox{since } \eta\leq 2, k_0\geq 3)
\nonumber
\\[-8pt]
\label{13117}
\\[-8pt]
\nonumber
&& \qquad \leq \bigl(2|x-y|\bigr)^{\eta-2\alpha\gamma}2^{1-N^*}\bigl|V'(x)\bigr|
\qquad [\mbox{by }(\ref{13111})]
\\
\nonumber
&&\qquad\leq \bigl(|x-y|\bigr)^{\eta-2\alpha\gamma}2^{2-N^*}\bigl|V'(x)\bigr|
\\
&&\qquad\leq|x-y|^{\eta-2\alpha\gamma}, \qquad\forall n\geq N'_1,\nonumber
\end{eqnarray}
where the last inequality follows from $N^*\geq \log_2(|V'(x)|)+2$.
Now by triangle inequality and~(\ref{13112}) we get
%
\begin{eqnarray}
\nonumber
|x_n-y_n| &\leq & 2^{1-n}+|x-y|
\\
\label{13113}&\leq & 2^{1-N^*}
\\
\nonumber
&\leq & 4|x-y|.
\end{eqnarray}
This, (\ref{13112})  and~(\ref{eq14104}) imply
%
\begin{eqnarray}
\nonumber
|y_n-x_n|\cdot \bigl|V'(x)-V'(x_n)\bigr|
&\leq & 4|x-y|\cdot C_{(\fontsize{8.36}{10.36}{\selectfont\ref{11031}})}(J) 2^{-N^*k_0(\eta-1-2\alpha\gamma)}
\\
\label{13119} &\leq & 8C(J) |x-y|\cdot |x-y|^{\eta-1-2\alpha\gamma}
\\
\nonumber
&\leq & 8C(J) |x-y|^{\eta-2\alpha\gamma}.
\end{eqnarray}

Now recall that $Z^2$ is H\"older continuous with any  exponent less  than $\eta_{\mathrm{c}}$ (see Theorem~2 in \cite{FMW10})
to get that there exists $C=C(\omega)$ such that
\begin{eqnarray*}
&&\bigl|Z^{2}(y_n)-Z^{2}(y)\bigr|+ \bigl|Z^{2}(x_n)-Z^{2}(x)\bigr|
\\
&& \qquad \leq C(\omega) \bigl(|y_n-y|^{\eta_c-(2\alpha\gamma)/k_0}
+|x_n-x|^{\eta_c-(2\alpha\gamma)/k_0} \bigr)\qquad \forall y\in B \bigl(x,
2^{-N_3} \bigr).
\end{eqnarray*}
Recalling that
%
\begin{equation}
\label{13114} |y_n-y|, |x_n-x|\leq 2^{-n}
\end{equation}
 and~(\ref{13112}) we get
\begin{eqnarray*}
\bigl|Z^{2}(y_n)-Z^{2}(y)\bigr|+ \bigl|Z^{2}(x_n)-Z^{2}(x)\bigr|
&\leq & 2C(\omega)2^{-n(\eta_c-(2\alpha\gamma)/k_0)}
\\
&\leq & 2C(\omega)2^{-N^*(k_0\eta_c-2\alpha\gamma)}
\\
&\leq & 2C(\omega)2^{-N^*(\eta-2\alpha\gamma)},
\end{eqnarray*}
where the last inequality follows since by assumption $k_0>1+1/(\alpha-1-\beta)$ and hence $k_0\eta_c>\bar\eta_c>\eta$. By~(\ref{13111}), we immediately get
%
\begin{equation}
\label{131110} \bigl|Z^{2}(y_n)-Z^{2}(y)\bigr|+
\bigl|Z^{2}(x_n)-Z^{2}(x)\bigr| \leq 8C(
\omega)|x-y|^{\eta-2\alpha\gamma}.
\end{equation}
Now use again~(\ref{13114}) and triangle inequality to get that
\[
|y_n-x|\leq |y_n-y|+|y-x|\leq 2^{-(N^*-1)}.
\]
This together with the~(\ref{13114}) and the definition of $\tilde{S}_{\eta}(N,J)$ implies that
%
\begin{equation}
\label{13115} (x_n,y_n)\in \tilde{S}_{\eta}
\bigl(N^*-1,J \bigr)\cap \bigl\{ \bigl(i2^{-n}, j2^{-n}
\bigr), i,j\in \Z \bigr\}.
\end{equation}
Note that
%
\begin{eqnarray}
\nonumber
|x_n-y_n| &\leq & 2^{-(N^*-1)}\qquad [\mbox{by }(\ref{13113})]
\\
\label{13116} &\leq& 2^{-\log^2(k_0N^*)}
\\
\nonumber
& =& 2^{-\log^2(n)},
\end{eqnarray}
where the first inequality follows by~(\ref{13113}) and the second inequality
follows easily by our assumption $N^*\geq (k_0)^{10}, k_0>3$.
By (\ref{13115}), (\ref{13116}) and since  $N^*-1\geq N_1$, we can apply
Lemma~\ref{Step3}
to get
\begin{eqnarray}
\nonumber
&& \bigl|Z^{2}(y_n)-Z^{2}(x_n)-(y_n-x_n)V'(x_n)\bigr|
\\
\nonumber
&& \qquad =\bigl|Z^{2,\eta}_t(x_n,y_n)\bigr|
\nonumber
\\[-8pt]
\label{131111}
\\[-8pt]
\nonumber
&& \qquad \leq C|y_n-x_n|^{\eta-2\alpha\gamma}
\\
&& \qquad \leq C'|y-x|^{\eta-2\alpha\gamma},\nonumber
\end{eqnarray}
where the last inequality follows by~(\ref{13113}).

By~(\ref{131112}) and  the bounds (\ref{13117}), (\ref{13119}), (\ref{131110}), (\ref{131111}), we complete the proof.
\end{pf*}

\begin{lemma}
\label{Step4}
Let $J\geq 1$, $\eta\in (\eta_c,\min\{\bar\eta_{\mathrm{c}},1\})$.
For almost every $\omega\in A^\varepsilon$
and for
all $x\in (0,1)\setminus S_\eta(J)$,
\[
\bigl|Z^{2}(y)-Z^2(x)\bigr|\leq C_1(J)|y-x|^{\eta-2\alpha\gamma}
\qquad \forall y\in B\bigl(x, 2^{-N_1}\bigr),
\]
where $N_1=N_1(\omega)$ is from Lemma~\ref{Step3}.
\end{lemma}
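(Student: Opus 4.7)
The plan is to mimic the proof of Lemma~\ref{lem:Step7} above, but now, because $\eta<1$, I will not need to introduce any derivative-like object $V'(x)$: the linear correction term drops out and I can work directly with differences $Z^2(y)-Z^2(x)$. The two main ingredients will be Lemma~\ref{Step3} (which controls $\widetilde{Z}^{2,\eta}_t(x_1,x_2)=Z^2(x_1)-Z^2(x_2)$ at dyadic points lying in $\widetilde{S}_\eta(N,J)$) and the $\mathbf P$-a.s.\ H\"older continuity of $Z^2$ with any exponent strictly less than $\eta_c$ (Theorem~2 of \cite{FMW10}), which will be used in the event $A^\varepsilon$ through the constant $C(\omega)$.

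The plan is as follows. Fix a large integer $k_0$ such that $k_0\eta_c>\eta+2\alpha\gamma$; such $k_0$ exists because $\eta<1$ while $k_0\eta_c$ can be made as large as we want. Given $x\in(0,1)\setminus S_\eta(J)$ and $y\in B(x,2^{-N_1})\setminus\{x\}$, choose the integer $N^*\geq N_1$ with $2^{-N^*-1}\leq|x-y|\leq2^{-N^*}$, and set $n=k_0N^*$. Pick dyadic points $x_n,y_n\in\{i2^{-n}:i\in\Z\}$ with $|x_n-x|,|y_n-y|\leq2^{-n}$. By the triangle inequality
\begin{equation*}
|Z^2(y)-Z^2(x)|\leq |Z^2(y_n)-Z^2(y)|+|Z^2(x_n)-Z^2(x)|+|Z^2(x_n)-Z^2(y_n)|.
\end{equation*}

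For the first two terms I use the a.s.\ H\"older regularity of $Z^2$ with exponent $\eta_c-2\alpha\gamma/k_0$: they are bounded by $2C(\omega)2^{-n(\eta_c-2\alpha\gamma/k_0)}=2C(\omega)2^{-N^*(k_0\eta_c-2\alpha\gamma)}\leq 2C(\omega)2^{-N^*(\eta-2\alpha\gamma)}\leq C'(\omega)|x-y|^{\eta-2\alpha\gamma}$ by our choice of $k_0$ and the lower bound on $|x-y|$. For the middle term I verify the two hypotheses of Lemma~\ref{Step3} with $N=N^*-1$: first, since $|y_n-x|\leq|y_n-y|+|x-y|\leq 2^{-n}+2^{-N^*}\leq 2^{-(N^*-1)}$ and similarly for $x_n$, and since $x\in(0,1)\setminus S_\eta(J)$, we have $(x_n,y_n)\in\widetilde{S}_\eta(N^*-1,J)$; second, since $|x_n-y_n|\leq 4|x-y|\leq 2^{2-N^*}$ and $n=k_0N^*$ with $k_0\geq3$ and $N^*$ taken large, one has $|x_n-y_n|\leq2^{-\log^2n}$ (possibly after enlarging $N_1$). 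Lemma~\ref{Step3} then yields
\begin{equation*}
|Z^2(x_n)-Z^2(y_n)|\leq 2C_{(\ref{eq:07_03_1})}(J)|x_n-y_n|^{\eta_c-2\alpha\gamma}2^{-(N^*-1)(\eta-\eta_c)}.
\end{equation*}
Using $|x_n-y_n|\leq 4|x-y|$ and $2^{-(N^*-1)}\leq 4|x-y|$ (from $|x-y|\geq2^{-N^*-1}$), the right-hand side is bounded by a constant times $|x-y|^{(\eta_c-2\alpha\gamma)+(\eta-\eta_c)}=|x-y|^{\eta-2\alpha\gamma}$. Combining the three bounds gives the lemma with $C_1(J)$ a constant independent of $x,y$, and $N_1$ possibly enlarged to absorb the finitely many constraints $N^*\geq N_1$, the $\log^2 n$ condition, etc.

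The main technical point I expect to have to be careful about is the exponent arithmetic: checking that $k_0\eta_c>\eta+2\alpha\gamma$ (which is why the restriction $\eta<1$ matters, since without it there would be no integer $k_0$ with that property while still keeping the other log-condition $k_0\geq3$ compatible; for $\eta\geq1$ one is forced to pass through the derivative term as in Lemma~\ref{lem:Step7}), and verifying that the two conditions of Lemma~\ref{Step3} really hold at the chosen scale $n=k_0N^*$. Everything else is routine application of the triangle inequality and of the previously established bounds.
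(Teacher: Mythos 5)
Your argument is correct, but it is not the route the paper takes for this lemma. The paper proves Lemma~\ref{Step4} by a chaining (telescoping) argument: writing $Z^2(y)-Z^2(x)$ as $\widetilde{Z}^{2,\eta}_t(y_N,x_N)$ plus the series $\sum_{n\geq N}\bigl(\widetilde{Z}^{2,\eta}_t(y_{n+1},y_n)-\widetilde{Z}^{2,\eta}_t(x_{n+1},x_n)\bigr)$ over \emph{all} dyadic scales $n\geq N$, applying Lemma~\ref{Step3} to each consecutive pair, and summing the geometric series $\sum_n 2^{-n(\eta_c-2\alpha\gamma)}$; the only qualitative input beyond Lemma~\ref{Step3} is continuity of $Z^2$ (to pass to the limit in the telescoping sum). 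You instead work at a single coarse scale $n=k_0N^*$, invoke Lemma~\ref{Step3} once for the pair $(x_n,y_n)$, and absorb the endpoint errors $|Z^2(x_n)-Z^2(x)|$, $|Z^2(y_n)-Z^2(y)|$ using the quantitative H\"older continuity of $Z^2$ with exponent $\eta_c-2\alpha\gamma/k_0$ from \cite{FMW10}. This is precisely the scheme of the paper's proof of Lemma~\ref{lem:Step7}, specialized to $\eta<1$ where the correction term $V'$ is absent, so your approach has the virtue of unifying the two cases; the price is that you need the near-$\eta_c$ H\"older estimate as an external input and must track the extra bookkeeping ($N^*-1\geq N_1$, the condition $|x_n-y_n|\leq 2^{-\log^2 n}$, enlarging $N_1$), all of which you handle or flag correctly. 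Your verification of the hypotheses of Lemma~\ref{Step3} (taking $x_0=x$, the bound $|x_n-y_n|\leq 4|x-y|$, and $2^{-(N^*-1)}\leq 4|x-y|$) and the final exponent arithmetic are sound.

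One small inaccuracy in your closing discussion: the claim that for $\eta\geq 1$ ``there would be no integer $k_0$ with $k_0\eta_c>\eta+2\alpha\gamma$'' is false --- such a $k_0$ always exists since $\eta<2$. The genuine reason the argument is restricted to $\eta<1$ is the one you also give: for $\eta>1$ the quantity $\widetilde{Z}^{2,\eta}_t(x_1,x_2)$ controlled by Lemma~\ref{Step3} contains the linear correction $(x_1-x_2)\frac{\partial p^\alpha}{\partial y}$, so it no longer equals $Z^2(x_1)-Z^2(x_2)$ and one must introduce $V'$ as in Lemma~\ref{lem:Step7}. This does not affect the validity of your proof.
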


\begin{pf}
For any $n\geq 1$,
take $x_n,y_n\in \{i2^{-n},i\in\Z\}$ satisfying the following conditions:
\begin{eqnarray*}
&& |x_n-x|\leq 2^{-n},\qquad |x_n-x_{n+1}|
\leq 2^{-n-1},
\\
&& |y_n-y|\leq 2^{-n},\qquad |y_n-y_{n+1}|
\leq 2^{-n-1}.
\end{eqnarray*}
Applying Lemma~\ref{Step3}, we get for every $N\geq N_1$ the bound
%
\begin{eqnarray}
\nonumber
&& \bigl|Z^{2,\eta}_t(y, x)\bigr|
\\
&& \qquad \leq \bigl| Z^{2,\eta}_t(y_N,
x_N)\bigr|+\sum_{n=N}^\infty \bigl(\bigl|
Z^{2,\eta}_t(y_{N+1}, y_{N})\bigr|+\bigl|Z^{2,\eta}_t(x_{N+1},
x_{N})\bigr| \bigr)
\nonumber
\\[-8pt]
\\[-8pt]
\nonumber
&& \qquad \leq 2C(J)2^{-N(\eta-\eta_c)} \Biggl(|x_N-y_N|^{\eta_c-2\alpha\gamma}+2
\sum_{n=N}^\infty 2^{-n(\eta_c-\alpha\gamma)} \Biggr)
\\
&& \qquad \leq C'(J)2^{-N(\eta-\eta_c)} \bigl(|x_N-y_N|^{\eta_c-2\alpha\gamma}+2^{-N(\eta_c-2\alpha\gamma)}
\bigr).
\nonumber
\end{eqnarray}
Choosing $N$ so that $|y-x|\in[2^{-N-1},2^{-N}]$, we complete the proof.
\end{pf}

Now we are able to complete the following.

\begin{pf*}{Proof of Lemma~\ref{P1}} Lemmas \ref{lemStep7}, \ref{Step4} imply that
\[
\mathbf{P}\bigl(H_{Z^2}(x)\geq \eta-2\alpha\gamma,  \forall x\in
(0,1)\setminus S_{\eta}(J);A^\varepsilon\bigr)\geq 1-\varepsilon.
\]
Letting here $\varepsilon\to0$, we complete the proof of the lemma.
\end{pf*}

\section{Lower bound for the Hausdorff dimension}
\label{sec5}

The aim of this section is to prove the following proposition.

\begin{proposition}
\label{prop08023}
For every $\eta\in(\eta_c,\overline{\eta}_c)\setminus\{1\}$,
\[
\operatorname{dim}({\mathcal E}_{Z^2,\eta})\geq (1+\beta) (\eta-
\eta_c),\qquad \mathbf{P}\mbox{-a.s. on }\bigl\{X_t
\bigl((0,1)\bigr)>0\bigr\}.
\]
\end{proposition}

\begin{remark}
Clearly, the above proposition together with Proposition~\ref{prop0802}
completes the proof of Theorem~\ref{thmmfractal}.
\end{remark}

As we have already mentioned in the \hyperref[sec1]{Introduction}, the proof of the lower
bound is much more involved then the proof of the upper one.  Due to the
mentioned complexity of the proof we give, for the reader's convenience, a
short description of our strategy. Section~\ref{sec4.1} is devoted to
deriving some uniform estimates on ``masses'' of~$X_s$ of dyadic intervals
at times $s$ close to $t$. In Section~\ref{sec4.2}, we construct a set
$\tilde{J}_{\eta,1}$ with
$\operatorname{dim}(\tilde{J}_{\eta,1})\geq (\beta+1)(\eta-\eta_{\mathrm{c}})$, on
which we show existence of ``big'' jumps of~$X$ that occur close to time $t$.
These jumps are ``encoded'' in the jumps of the auxiliary processes $L^+_{n,l,r}$
and they, in fact,  ``may'' destroy  the H\"older continuity of $X_t(\cdot)$
on $\tilde{J}_{\eta,1}$ for any  index greater or equal to $\eta$ (see
Lemma~\ref{lem321} and Lemma~\ref{lower4}). However, there are also other
jumps of the process $X$ (they will be encoded in processes $L^-_{n,l,r}$) which
may compensate the impact of the jumps of $X$ encoded in $L^+$. The most
difficult part of the proof is to show that there is no such compensation,
and this is done in Section~\ref{sec4.3}.  More precisely, we prove in
Section~\ref{sec4.3} that such a compensation is possible on a set of
the Hausdorff dimension strictly smaller than $(\beta+1)(\eta-\eta_{\mathrm{c}})$,
and hence does not influence the dimension result. It is done in Lemmata
\ref{compens}, \ref{lastlemma} and \ref{lem0271}.

\subsection{Uniform estimates for values of $X_s$ on dyadic intervals}
\label{sec4.1}

In this subsection, we derive some bounds for $X_s(I_k^{(n)})$, where
\[
I_k^{(n)}:= \bigl[ k2^{-n}, (k+1)2^{-n}\bigr).
\]

In what follows, fix some
%
\begin{equation}
\label{eq02081} m>3/\alpha,
\end{equation}
and let $\theta\in (0,1)$ be arbitrarily small.
Define
\begin{eqnarray*}
O_n&:=& \Bigl\{\omega\dvtx \mbox{there exists }k\in \bigl[0,
2^{n}-1 \bigr]\mbox{ such that }
\\
&& \hspace*{7pt}\sup_{s\in(t-2^{-\alpha n}n^{\alpha^2m/3},t)}X_s \bigl(I^{(n)}_k
\bigr)\geq 2^{-n}n^{2m\alpha/3} \Bigr\}
\end{eqnarray*}
and
\begin{eqnarray*}
B_n=B_n(\theta)& :=& \Bigl\{\omega\dvtx \mbox{there
exists }k \in \bigl[0, 2^{n}-1 \bigr]\mbox{ such that }
\\
&& \hspace*{6pt} I^{(n)}_k\cap \bigl\{x\dvtx X_t(x)\geq
\theta \bigr\}\neq\varnothing
\\
&& \hspace*{7pt} \mbox{and }\inf_{s\in(t-2^{-\alpha n}n^{-\alpha m},t)}X_s \bigl(I^{(n)}_k
\bigr)\leq 2^{-n}n^{-2m} \Bigr\}.
\end{eqnarray*}

\begin{lemma}\label{lower0}
There exists a constant $C$ such that
\[
\mathbf{P}(O_n)\leq Cn^{-m\alpha/3},\qquad n\geq 1.
\]
\end{lemma}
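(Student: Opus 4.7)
The plan is to combine a union bound over the $2^n$ dyadic intervals $I_k^{(n)}$ with a sharp moment estimate for the mass contained in each. Setting $\tau_n := 2^{-\alpha n} n^{\alpha^2 m/3}$ and $h_n := 2^{-n} n^{2m\alpha/3}$, the first step is the union bound
\[ \mathbf{P}(O_n) \leq \sum_{k=0}^{2^n - 1} \mathbf{P}\Big(\sup_{s \in (t - \tau_n, t)} X_s(I_k^{(n)}) \geq h_n\Big), \]
which reduces the task to a uniform estimate on each summand of order $2^{-n} n^{-m\alpha/3}$.

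For a fixed $k$, I would approximate $\mathbf{1}_{I_k^{(n)}}$ by a smooth compactly supported test function $\varphi_k$ with $\mathbf{1}_{I_k^{(n)}} \leq \varphi_k \leq \mathbf{1}_{\tilde{I}_k^{(n)}}$ on a slight enlargement of $I_k^{(n)}$, and apply the martingale decomposition~\eqref{mart.dec} starting from the Markov time $t - \tau_n$. This writes $\langle X_s, \varphi_k\rangle$ as the sum of a diffusive part $\langle X_{t-\tau_n}, S^\alpha_{s - (t-\tau_n)}\varphi_k\rangle$, a purely discontinuous martingale $\widetilde M_s(\varphi_k)$, and an integrated drift term. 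On a high-probability event where $\sup_{s \leq t} X_s(\R)$ is controlled (using standard total-mass estimates for the superprocess), the diffusive contribution is of order $2^{-n}$, which is negligible relative to $h_n$, and the drift term is handled similarly.

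Third, the main contribution comes from the martingale part. Using the compensator formula~\eqref{decomp} together with a Burkholder-Davis-Gundy-type inequality at order $p \in (1, 1+\beta)$ and Doob's maximal inequality, I would bound
\[ \mathbf{E}\Big[\sup_{s \in (t-\tau_n, t)} |\widetilde M_s(\varphi_k)|^p\Big] \leq C\, \mathbf{E}\Big[\int_{t-\tau_n}^t du \int X_u(dy)\, \varphi_k(y)^{1+\beta}\Big]^{p/(1+\beta)}. \]
Since $\varphi_k$ is essentially supported in an interval of length $2^{-n}$ and the time window has length $\tau_n$, this yields (on the good mass event) a bound of order $\bigl(\tau_n \cdot 2^{-n}\bigr)^{p/(1+\beta)}$. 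Combined with a Chebyshev inequality at threshold $h_n$, this gives a per-interval estimate of the required form.

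The main obstacle is the delicate exponent balancing: the per-interval bound must be sharp enough for the sum over the $2^n$ values of $k$ to still yield the polynomial decay $n^{-m\alpha/3}$. The specific exponents $\alpha^2 m/3$ in $\tau_n$ and $2m\alpha/3$ in $h_n$ are calibrated precisely so that the $p$-th moment argument, combined with the natural stable-diffusion scaling $\tau_n^{1/\alpha} = 2^{-n} n^{\alpha m/3}$ and the spatial scale $2^{-n}$ of $I_k^{(n)}$, delivers exactly this decay when $p$ is chosen sufficiently close to $1+\beta$. Fine-tuning these calculations (and ensuring the mass-control event has negligible complement) is the technically demanding part.
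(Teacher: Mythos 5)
First, note that the paper does not actually prove this lemma: it is quoted from \cite{FMW10} (``an almost word-for-word repetition of the proof of Lemma 5.5''), so your argument has to stand on its own. The jump/martingale part of your plan is sound in spirit --- restricting the compensator to the window $(t-\tau_n,t]$ makes the bracket of order $\tau_n 2^{-n}=2^{-n(1+\alpha)}{\rm poly}(n)$, and a $p$-th moment bound with $p$ close to $1+\beta$ then gives a per-interval probability of order $2^{-np\bar\eta_c}$ with $p\bar\eta_c>1$ (since $\alpha>1+\beta$), which survives the union bound with room to spare. The fatal gap is the ``diffusive'' term. It is simply false that controlling $\sup_{s\leq t}X_s(\mathsf{R})$ makes $\langle X_{t-\tau_n},S^\alpha_u\varphi_k\rangle$ of order $2^{-n}$: at $u=0$ this term equals $X_{t-\tau_n}(\tilde I_k^{(n)})$, and bounding \emph{that} uniformly over the $2^n$ intervals at scale $2^{-n}{\rm poly}(n)$ is exactly the statement of the lemma evaluated at the single time $t-\tau_n$ --- the step is circular. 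Nor can you repair it by a per-interval moment bound: the first moment of this term is indeed $O(2^{-n})$, but Markov's inequality at threshold $h_n=2^{-n}n^{2m\alpha/3}$ then yields only $Cn^{-2m\alpha/3}$ per interval, and this is essentially sharp (a single recent jump of the compensated measure of moderate size inflates the local density near $I_k^{(n)}$ to order $n^{2m\alpha/3}$ with probability that is only polynomially small in $n$, not exponentially small in $n$). Multiplying by $2^n$ destroys the bound. In short, the strategy ``union bound over $k$ plus a moment estimate for each $k$'' is structurally incapable of producing $Cn^{-m\alpha/3}$, because the per-interval events are not rare enough individually; they are merely very strongly correlated.

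The rate $n^{-m\alpha/3}$ does not come from calibrating a $p$-th moment computation, as you suggest at the end, but from the ratio $\tau_n^{1/\alpha}/h_n=n^{-\alpha m/3}$ in a single \emph{global} first-moment comparison. The argument (mirrored by the proof of Lemma~\ref{lower1} that the paper does write out) is: let $\sigma_n$ be the first time in $(t-2^{-\alpha n}n^{\alpha^2 m/3},t)$ at which some $X_s(I^{(n)}_k)\geq h_n$; on $\{\sigma_n<t\}$ the strong Markov property gives
$\mathbf{E}\bigl[X_t(\tilde B_n)\,\big|\,\mathcal{F}_{\sigma_n}\bigr]\geq c\,h_n$
for the interval $\tilde B_n$ of length of order $\tau_n^{1/\alpha}=2^{-n}n^{\alpha m/3}$ around the offending $I^{(n)}_k$ (a constant fraction of the $\alpha$-stable kernel mass started in $I^{(n)}_k$ stays in $\tilde B_n$ over a time $\leq\tau_n$), whence $c\,h_n\,\mathbf{P}(\sigma_n<t)\leq \mathbf{E}\bigl[X_t(\tilde B_n)\1\{\sigma_n<t\}\bigr]\leq C\tau_n^{1/\alpha}$, i.e.\ $\mathbf{P}(O_n)\leq C n^{-\alpha m/3}$. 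There is one estimate, not $2^n$ of them. A secondary, fixable, issue with your write-up: in the mild (Green function) form the stochastic integrand $S^\alpha_{s-u}\varphi_k(y)$ depends on $s$, so the integral is not a martingale in $s$ and Doob/BDG do not apply to its supremum as written; one should instead work with the genuine martingale $s\mapsto e^{-as}\langle X_s,S^\alpha_{t-s}\varphi_k\rangle$ or with a fixed dominating integrand.
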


The proof is an almost word-by-word repetition of the
proof of Lemma~5.5 in \cite{FMW10}, and we omit it.

\begin{lemma}\label{lower1}
There exists a constant $C=C(m)$ such that, for every $\theta\in(0,1)$,
\[
\mathbf{P}\bigl(B_n(\theta)\cap A^{\varepsilon}\bigr)\leq C
\theta^{-1}n^{-\alpha m/3}, \qquad n\geq \tilde n(\theta),
\]
for some $\tilde n(\theta)$ sufficiently  large.
\end{lemma}

\begin{pf}
Define
\begin{eqnarray*}
\tau_n &:=& \inf \bigl\{s\in \bigl(t-2^{-\alpha n}n^{-\alpha m},t
\bigr)\dvtx X_s \bigl(I^{(n)}_k \bigr)\leq
2^{-n}n^{-2m}
\\
&& \hspace*{83pt}\qquad \mbox{for some }k\in \bigl[0,2^n-1 \bigr] \bigr\}.
\end{eqnarray*}
Fix an arbitrary $\theta\in(0,1)$.
If $\omega\in B_n=B_n(\theta)$, then there exists a sequence $\{(s_j,I^{(n)}_{k_j})\}$
such that
$X_{s_j}(I^{(n)}_{k_j})\leq 2^{-n}n^{-2m}$ for all $j\geq1$, and $s_j\downarrow\tau_n$, as $j\rightarrow\infty$.
Since for each
$n\geq1$ the number of intervals $I^{(n)}_k$ is finite, there exist $\tilde k_n$ and a
subsequence $j_r$ such that $k_{j_r}=\tilde k_n$ for all $r\geq1$. Therefore,
\[
\lim_{r\rightarrow\infty} X_{s_{j_r}}\bigl(I^{(n)}_{\tilde k_n}
\bigr)\leq 2^{-n}n^{-2m}.
\]
By the right continuity of the measure valued process $\{X_t\}_{t\geq 0}$,
we get that
\[
X_{\tau_n}\bigl(I^{(n)}_{\tilde{k}_n}\setminus \bigl\{
\tilde{k}_n 2^{-n}\bigr\}\bigr)\leq 2^{-n}n^{-2m}.
\]
Since $X$ has only positive jumps in the form of atomic measures and these
jumps do not occur with probability one at dyadic rational points of space,
we immediately deduce that,  in fact,
\[
X_{\tau_n}\bigl(I^{(n)}_{\tilde k_n}\bigr)\leq
2^{-n}n^{-2m}, \qquad \mathbf{P}\mbox{-a.s.}
\]
Put
\[
\tilde{B}_n:= \biggl[\frac{\tilde{k}_n}{2^{n}}+\frac{1}{2^{n+1}}-2^{-n}n^{-m},
\frac{\tilde{k}_n}{2^{n}}+\frac{1}{2^{n+1}}+2^{-n}n^{-m} \biggr],
\]
and
\[
\tilde E^{(n)}:= \bigl\{\omega\dvtx I^{(n)}_{\tilde{k}_n}
\cap\bigl\{x \dvtx X_t(x)>\theta\bigr\} \neq\varnothing \bigr\}.
\]
Recall that, on $A^\varepsilon$, $X_t(\cdot)$ is locally H\"older continuous on $(0,1)$
with\vspace*{1pt} exponent $\eta_c-\varepsilon$ and H\"older constant $C_{\fontsize{8.36}{10.36}{\selectfont\eqref{eq080220}}}$
[see \eqref{goodset}]. Therefore, on the event $A^{\varepsilon}\cap \tilde E^{(n)}$,
we have $X_t(x)\geq \theta/2$ for all $x\in\tilde{B}_n$ and all
$n\geq \tilde n(\theta)$, where $\tilde n(\theta)$ is chosen sufficiently large.

Thus, for $n\geq \tilde n(\theta)$,
\begin{eqnarray}
\nonumber
\theta 2^{-n}n^{-m}\mathbf{P} \bigl(
\tau_n<t,O_n^{c}, \tilde
E^{(n)}, A^{\varepsilon} \bigr) &=& \frac{\theta}{2}|
\tilde{B}_n|\mathbf{P} \bigl(\tau_n<t,O_n^{c},
\tilde E^{(n)},A^{\varepsilon} \bigr)
\\
\label{eq15101}&\leq& \mathbf{E} \bigl[X_t(
\tilde{B}_n){ \mathsf 1}_{\{\tau_n<t,O_n^{c},\tilde E^{(n)},A^{\varepsilon}\}} \bigr]
\\
&\leq& 
\mathbf{E} \bigl[X_t( \tilde{B}_n){
\mathsf 1}_{\{\tau_n<t,\tilde O_n^{c}\}} \bigr],\nonumber
\end{eqnarray}
where
\[
\tilde O_n^c :=\bigl\{ X_{\tau_n}
\bigl(I^{(n)}_k\bigr)\leq 2^{-n}n^{2m\alpha/3},
\mbox{for all } k=0,\ldots, 2^n-1\bigr\},
\]
and the last inequality in (\ref{eq15101}) follows since $ O_n^c\subset  \tilde O_n^c$.

Using the strong Markov property, we then obtain
%
\begin{equation}
\label{lower1.1}\hspace*{6pt} \theta 2^{-n}n^{-m}\mathbf{P} \bigl(
\tau_n<t,O_n^{c},\tilde
E^{(n)},A^{\varepsilon} \bigr)\leq \mathbf{E} \bigl[S_{t-\tau_n}X_{\tau_n}(
\tilde{B}_n){\mathsf 1}_{\{\tau_n<t,\tilde O_n^{c}\}} \bigr]e^{|a|t},
\end{equation}
for all $n\geq \tilde n(\theta)$. 
It is clear that
%
\begin{eqnarray}
\label{eq02082} && \mathbf{E} \bigl[S_{t-\tau_n}X_{\tau_n}(
\tilde{B}_n){\mathsf 1}_{\{\tau_n<t,\tilde O_n^{c}\}} \bigr]
\\
&& \qquad =\mathbf{E} \biggl[\int_{\mathsf{R}}X_{\tau_n}(dz)
\int_{\tilde{B}_n}p_{t-\tau_n}^\alpha(y-z)\,dy{\mathsf
1}_{\{\tau_n<t,\tilde O_n^{c}\}} \biggr] \qquad \forall n\geq 1.
\end{eqnarray}
Since $X_{\tau_n}(I^{(n)}_{\tilde{k}_n})\leq 2^{-n}n^{-2m}$ on the event $\{\tau_n<t\}$, we have
%
\begin{equation}
\label{lower1.2}\hspace*{14pt}\quad \mathbf{E} \biggl[\int_{I^{(n)}_{\tilde{k}_n}}X_{\tau_n}(dz)
\int_{\tilde{B}_n}p_{t-\tau_n}^\alpha(y-z)\,dy{\mathsf
1}_{\{\tau_n<t,\tilde O_n^{c}\}} \biggr] \leq 2^{-n}n^{-2m} \qquad \forall
n\geq 1.
\end{equation}
Recalling that $\tau_n\geq t-2^{-\alpha n}n^{-\alpha m}$ and using the scaling property of the kernel $p^\alpha$
together with the bound~(\ref{eq326})
we get
\begin{eqnarray*}
p^\alpha_{t-\tau_n}(y-z)&=& (t-\tau_n)^{-1/\alpha}p^\alpha_1
\biggl(\frac{y-z}{(t-\tau_n)^{1/\alpha}} \biggr)
\\
&\leq & C(t-\tau_n)|y-z|^{-\alpha-1}
\\
&\leq & C2^{-\alpha n}n^{-\alpha m}|y-z|^{-\alpha-1}.
\end{eqnarray*}
Further, if $z\in I^{(n)}_{\tilde{k}_n\pm j}$ and $y\in\tilde{B}_n$, then
\begin{eqnarray*}
|y-z| &\geq & (j-1)2^{-n}+ \bigl(1/2-n^{-m}
\bigr)2^{-n}= \bigl(j-1/2-n^{-m} \bigr)2^{-n}
\\
&\geq& \tfrac{1}{10}j2^{-n}\qquad  \forall n\geq 2, j\geq 1.
\end{eqnarray*}
Combining the last two bounds, we get
\begin{eqnarray*}
&& \int_{I^{(n)}_{\tilde{k}_n\pm j}}X_{\tau_n}(dz)\int_{\tilde{B}_n}p_{t-\tau_n}^\alpha(y-z)
\,dy
\\
&& \qquad \leq \int_{I^{(n)}_{\tilde{k}_n\pm j}}X_{\tau_n}(dz) \int
_{\tilde{B}_n} Cj^{-\alpha-1}2^{(\alpha+1)n}2^{-\alpha n}n^{-\alpha m}
\,dy
\\
&& \qquad =Cj^{-\alpha-1}n^{-(\alpha+1)m}X_{\tau_n}
\bigl(I^{(n)}_{\tilde{k}_n\pm j} \bigr)\qquad \forall n\geq 2, j\geq 1.
\end{eqnarray*}
On the event $\{\tau_n<t\}\cap \tilde O_n^{c}$ we then have
%
\begin{eqnarray}
\nonumber
&& \int_{I^{(n)}_{\tilde{k}_n-j}\cup I^{(n)}_{\tilde{k}_n+j}}X_{\tau_n}(dz)\int
_{\tilde{B}_n}p_{t-\tau_n}^\alpha(y-z)\,dy
\\
&& \label{eq02083}\qquad \leq Cj^{-\alpha-1}2^{-n} n^{-(\alpha+1)m+2m\alpha/3}
\\
&& \qquad =Cj^{-\alpha-1}2^{-n} n^{-(({1}/{3})\alpha+1)m} \qquad\forall n
\geq 2, j\geq 1.
\nonumber
\end{eqnarray}
Consequently, by summing up~(\ref{eq02083}) over $j\geq 1$, we get
%
\begin{eqnarray}
\quad&&\mathbf{E} \biggl[\int_{\mathsf{R}\setminus I^{(n)}_{\tilde{k}_n}}
X_{\tau_n}(dz)\int_{\tilde{B}_n}p_{t-\tau_n}^\alpha(y-z)
\,dy{ \mathsf 1}_{\{\tau_n<t,\tilde O_n^{c}\}} \biggr] \leq C2^{-n} n^{-(({1}/{3})\alpha+1)m},
\nonumber
\\[-8pt]
\label{lower1.3}\\[-8pt]
\eqntext{n\geq 2.}
\end{eqnarray}
This and (\ref{lower1.2}) imply that (\ref{eq02082}) is bounded by
%
\begin{equation}
\label{eq02085} C2^{-n} \bigl(n^{-(({1}/{3})\alpha+1)m}+n^{-2m}
\bigr), \qquad n\geq 2.
\end{equation}

Combining (\ref{lower1.1}), (\ref{eq02085}) and using the trivial bound for $n=1$, we obtain
\[
\theta\mathbf{P}\bigl(\tau_n<t,O_n^{c},
\tilde E^{(n)},A^{\varepsilon}\bigr) \leq C\bigl(n^{-({1}/{3})\alpha m}+n^{-m}
\bigr)\leq Cn^{-({1}/{3})\alpha m}, \qquad n\geq \tilde n(\theta). 
\]
In view of Lemma~\ref{lower0},
\begin{eqnarray*}
\mathbf{P} \bigl(\tau_n<t, \tilde E^{(n)},
A^{\varepsilon} \bigr) &\leq & \mathbf{P}(O_n)+\mathbf{P} \bigl(
\tau_n<t,O_n^{c}, \tilde
E^{(n)},A^{\varepsilon} \bigr)
\\
&\leq & C\theta^{-1}n^{-\alpha m/3}, \qquad n\geq \tilde n(\theta).
\end{eqnarray*}
This completes the proof of the lemma.
\end{pf}

\subsection{Analysis of the set of jumps which destroy the H\"older continuity}
\label{sec4.2}

In this subsection, we construct a set $\tilde{J}_{\eta,1}$ such that its
Hausdorff dimension is bounded from below by  $(\beta+1)(\eta-\eta_{\mathrm{c}})$
and in the vicinity of each $x\in\tilde{J}_{\eta,1}$ there are jumps of $X$
which destroy the H\"older continuity at $x$ for any index greater than $\eta$.

We first introduce $\tilde{J}_{\eta,1}$ and prove the lower bound for its
dimension.
Set
\[
q:=\frac{(\alpha+3)m}{(\beta+1)(\eta-\eta_c)}
\]
and define
\begin{eqnarray*}
A_{k}^{(n)} &:=& \bigl\{\bDelta X_s
\bigl(I^{(n)}_{k-2n^q-2} \bigr)\geq 2^{-(\eta+1)n}
\\
&&\hspace*{6pt}\mbox{for some } s\in\bigl[t-2^{-\alpha n}n^{-\alpha m},t-2^{-\alpha(n+1)}(n+1)^{-\alpha m}\bigr)
\bigr\},
\\
J^{(n)}_{k,r} &:=& \biggl[\frac{k}{2^n}-
\bigl(n^q2^{-n}\bigr)^r,\frac{k+1}{2^n}+
\bigl(n^q2^{-n}\bigr)^r \biggr].
\end{eqnarray*}

Let us introduce the following notation.
For a Borel set $B$ and an event $E$, define a random set
\[
B\mathsf{1}_E(\omega):= \cases{ B, & \quad $\omega\in E$,
\cr
\varnothing, & \quad $\omega\notin E$.}
\]
Now we are ready to define random sets
\[
\tilde{J}_{\eta,r}:=\limsup_{n\to\infty} \bigcup
_{k=2n^q+2}^{2^n-1} J^{(n)}_{k,r}{
\mathsf 1}_{A_k^{(n)}}, \qquad r>0.
\]
As we have mentioned already, we are interested in getting the lower bound
on Hausdorff dimension of $\tilde{J}_{\eta,1}$. The standard procedure
for this is as follows. First,  show that a bit ``inflated'' set
$\tilde{J}_{\eta,r}$, for certain $r\in (0,1)$, contains open intervals.
This would imply a lower bound $r$ on the Hausdorff dimension of
$\tilde{J}_{\eta,1}$ (see  Lemma~\ref{lower2} and Theorem~2 from
\cite{Jaff99} where a similar\vspace*{1pt} strategy was implemented). Thus,  to get a
sharper bound on Hausdorff dimension of $\tilde{J}_{\eta,1}$, one should
try to take $r$ as large as possible. In the next lemma, we show that,
in fact, one can choose $r=(\beta+1)(\eta-\eta_{\mathrm{c}})$.

\begin{lemma}
\label{lower2}
On the event $A^{\varepsilon}$,
\[
\bigl\{x\in (0,1)\dvtx X_t(x)\geq\theta\bigr\}\subseteq
\tilde{J}_{\eta,(\beta+1)(\eta-\eta_{\mathrm{c}})}, \qquad \pr\mbox{-a.s.}
\]
for every $\theta\in(0,1)$.
\end{lemma}

\begin{pf}
Fix an arbitrary $\theta\in(0,1)$.
We estimate the probability of the event $E_n\cap A^{\varepsilon}$, where
\[
E_n:= \Biggl\{\omega \dvtx \bigl\{x\in (0,1)\dvtx X_t(x)
\geq\theta\bigr\}\subseteq \bigcup_{k=2n^q+2}^{2^n-1}
J^{(n)}_{k,(\beta+1)(\eta-\eta_c)}{\mathsf 1}_{A_k^{(n)}} \Biggr\}.
\]
It follows from Lemma~\ref{lower1} that, for all
$n\geq \tilde n(\theta)$,
\begin{eqnarray}
\mathbf{P} \bigl(E_n^c\cap A^{\varepsilon} \bigr)&
\leq& \mathbf{P} \bigl(E_n^c\cap B_n\cap
A^{\varepsilon} \bigr)+ \mathbf{P} \bigl(E_n^c \cap
B_n^{c}\cap A^{\varepsilon} \bigr)
\nonumber
\\[-8pt]
\label{lower3.1}
\\[-8pt]
\nonumber
&\leq & C\theta^{-1}n^{-\alpha m/3}+\mathbf{P}
\bigl(E_n^c \cap B_n^{c}\cap
A^{\varepsilon} \bigr).
\end{eqnarray}
For any $k=0,\ldots,2^n-1$, the compensator measure $\widehat{N}(dr,dy,ds)$
of the random measure $\mathcal{N}(dr,dy,ds)$
[the jump measure for $X$---see discussion after~(\ref{incr})],
on
\begin{eqnarray*}
&& \mathcal{J}^{(n)}_1\times
I^{(n)}_k\times \mathcal{J}^{(n)}_2
\\
&& \qquad := \bigl[2^{-(\eta+1)n},\infty\bigr)\times I^{(n)}_k
\times \bigl[t-2^{-\alpha n}n^{-\alpha m},t-2^{-\alpha(n+1)}(n+1)^{-\alpha m}\bigr),
\end{eqnarray*}
is given by the formula
%
\begin{equation}
\label{eq302} 1 \bigl\{(r,y,s)\in \mathcal{J}^{(n)}_1
\times I^{(n)}_k\times \mathcal{J}^{(n)}_2
\bigr\}\varrho r^{-2-\beta}\,dr X_s(dy) \,ds.
\end{equation}
If
\[
k\in K_{\theta}:= \bigl\{l\dvtx I^{(n)}_l\cap
\bigl\{x\in (0,1) \dvtx X_t(x)\geq\theta\bigr\}\neq\varnothing\bigr\},
\]
then, 
by the definition of $B_n$, we have
\begin{equation}
\label{eq301} X_s \bigl(I^{(n)}_k \bigr)
\geq 2^{-n}n^{-2m}, \qquad \mbox{for $s\in
\mathcal{J}^{(n)}_2$, on the event $A^{\varepsilon}\cap
B_n^{c}$.}
\end{equation}
Define the measure $\widehat{\Gamma}(dr,dy,ds)$ on  $\R_+\times (0,1)\times \R_+ $, as follows:
%
\begin{equation}
\label{eq02086} \widehat{\Gamma}(dr,dy,ds):= \varrho r^{-2-\beta}\,dr
n^{-2m} \,dy \,ds.
\end{equation}
Then, by~(\ref{eq302}) and~(\ref{eq301}), on $A^{\varepsilon}\cap B_n^{c}$, and on the set
\[
\mathcal{J}^{(n)}_1\times \bigl\{y\in (0,1) \dvtx
X_t(y)\geq\theta\bigr\}\times \mathcal{J}^{(n)}_2
\]
we have the following  bound:
\[
\widehat{\Gamma}\bigl(dr,I_k^{(n)},\mathcal{J}^{(n)}_2
\bigr)\leq \widehat{\mathcal{N}}\bigl(dr,I_k^{(n)},
\mathcal{J}^{(n)}_2\bigr),\qquad k\in K_\theta.
\]
By standard arguments, it is easy to construct the Poisson point process $\Gamma(dr,dx, ds)$
on $\R_+\times (0,1)\times \R_+$ with intensity measure $\widehat\Gamma$ given by~(\ref{eq02086}) on the whole space
$\R_+\times (0,1)\times \R_+$ such that on $A^{\varepsilon}\cap B_n^{c}$,
\[
\Gamma \bigl(dr,I_k^{(n)},\mathcal{J}^{(n)}_2
\bigr)\leq\mathcal{N} \bigl(dr,I_k^{(n)},
\mathcal{J}^{(n)}_2 \bigr)
\]
for $r\in \mathcal{J}^{(n)}_1$ and $k\in K_\theta$.

Now, define
\[
\xi_k^{(n)}= {\mathsf 1}_{ \{\Gamma (\mathcal{J}^{(n)}_1\times I^{(n)}_{k-2n^q-2}\times \mathcal{J}^{(n)}_2
 )\geq 1 \}},\qquad k\geq
2n^q+2.
\]
Clearly, on $A^{\varepsilon}\cap B_n^{c}$ and for $k$ such that $k-2n^q-2\in K_{\theta}$,
\[
\xi_{k}^{(n)}\leq {\mathsf 1}_{ A_k^{(n)}}.
\]
Moreover, by construction $\{\xi_k^{(n)}\}_{k=2n^q+2}^{2^n+2n^q+1}$ is a collection of independent
 identically distributed Bernoulli random variables with success probabilities
\begin{eqnarray*}
p^{(n)}&:=& \widehat\Gamma \bigl(\mathcal{J}^{(n)}_1
\times I^{(n)}_{k-2n^q-2}\times \mathcal{J}^{(n)}_2
\bigr)
\\
&\,=& C 2^{(\eta-\eta_c)(1+\beta)n-n}n^{-(\alpha+2)m}.
\end{eqnarray*}
From the above  coupling with the Poisson point process $\Gamma$, it is
easy to see that
%
\begin{equation}
\label{eq02087} \mathbf{P} \bigl(E_n^c\cap
B_n^{c}\cap A^{\varepsilon} \bigr)\leq \mathbf{P} \bigl(
\tilde{E}_n^{c} \bigr),
\end{equation}
where
\[
\tilde{E}_n:= \Biggl\{(0,1)\subseteq\bigcup
_{k=2n^q+2}^{2^n+2n^q+1}J^{(n)}_k{\mathsf
1}_{\{\xi^{(n)}_k=1\}} \Biggr\}.
\]

Let $L{(n)}$ denote the length of the longest run of zeros
in the sequence $\{\xi_k^{(n)}\}_{k=2n^q+2}^{2^n+2n^q+1}$.
Clearly,
\[
\mathbf{P}\bigl(\tilde{E}_n^c\bigr)\leq\mathbf{P}
\bigl(L^{(n)}\geq 2^{n-(\beta+1)(\eta-\eta_c)n}n^{m(\alpha+3)}\bigr)
\]
and it is also obvious that
\[
\mathbf{P}\bigl(L^{(n)}\geq j\bigr)\leq 2^{n}p^{(n)}
\bigl(1-p^{(n)}\bigr)^j\qquad \forall j\geq 1.
\]
Use this with the fact that,
by~(\ref{eq02081}),
$m>1$,
to get that
%
\begin{equation}
\label{eq02088} \mathbf{P} \bigl(\tilde{E}_n^c \bigr)
\leq \exp \bigl\{-\tfrac{1}{2}n^{m} \bigr\}
\end{equation}
for all $n$ sufficiently large. Combining \eqref{lower3.1}, \eqref{eq02087}
and \eqref{eq02088}, we conclude that the sequence
$\mathbf{P}(E_n^c\cap A^{\varepsilon})$ is summable.
Applying Borel--Cantelli, we complete the proof of the lemma.
\end{pf}

Define
\[
h_\eta(x):=x^{(\beta+1)(\eta-\eta_c)}\log^2\frac{1}{x}
\]
and
\[
\mathcal{H}_\eta(A):=\lim_{\epsilon\to0} \inf \Biggl\{\sum
_{j=1}^\infty h_\eta\bigl(|I_j|\bigr),
A\in\bigcup_{j=1}^\infty I_j
\mbox{ and }|I_j|\leq\epsilon \Biggr\}.
\]
Combining Lemma~\ref{lower2} and Theorem~2 from \cite{Jaff99}, one can easily
get.

\begin{corollary}
\label{lower3}
On the event $A^{\varepsilon}\cap\{X_t((0,1))>0\}$,
\[
\mathcal{H}_\eta(\tilde{J}_{\eta,1})>0, \qquad \pr\mbox{-a.s.}
\]
and, consequently, on $A^{\varepsilon}\cap\{X_t((0,1))>0\}$,
\[
\operatorname{dim}(\tilde{J}_{\eta,1})\geq (\beta+1) (\eta-
\eta_c),\qquad \pr\mbox{-a.s.}
\]
\end{corollary}

\begin{pf}
Fix any $\theta\in(0,1)$.
If $\omega\in A^\varepsilon$ is such that $B_\theta:=\{x\in(0,1) \dvtx  X_t(x)\geq\theta\}$
is not empty, then by the local H\"older continuity of $X_t(\cdot)$ there exists an
open interval $(x_1(\omega),x_2(\omega))\subset B_{\theta/2}$. Moreover, in view of
Lemma~\ref{lower2},
\[
\bigl(x_1(\omega),x_2(\omega)\bigr)\subset
\tilde{J}_{\eta,(\beta+1)(\eta-\eta_c)}(\omega),\qquad \pr\mbox{-a.s.}
\]
on the event $A^\varepsilon\cap\{B_\theta~\mathrm{is}~\mathrm{not}~\mathrm{empty}\}$. Thus, we may apply
Theorem~2 from \cite{Jaff99} to the set $(x_1(\omega),x_2(\omega))$, which gives
\[
\mathcal{H}_\eta\bigl(\bigl(x_1(\omega),x_2(
\omega)\bigr)\cap\tilde{J}_{\eta,1}\bigr)>0,\qquad \pr\mbox{-a.s.}
\]
on the event $A^\varepsilon\cap\{B_\theta \mbox{ is not empty}\}$.
Thus,
\[
\operatorname{dim}\bigl(\bigl(x_1(\omega),x_2(\omega)
\bigr)\cap\tilde{J}_{\eta,1}\bigr)\geq (\beta+1) (\eta-
\eta_c), \qquad \pr\mbox{-a.s.}
\]
on the event $A^\varepsilon\cap\{B_\theta \mbox{ is not empty}\}$. Due to the monotonicity
of $\mathcal{H}_\eta(\cdot)$ and $\operatorname{dim}(\cdot)$, we conclude that
$\mathcal{H}_\eta(\tilde{J}_{\eta,1})>0$ and
$\operatorname{dim}(\tilde{J}_{\eta,1})\geq (\beta+1)(\eta-\eta_c)$, $\pr$-a.s.
on the event $A^\varepsilon\cap\{B_\theta \mbox{ is not empty}\}$. Noting that
${\mathsf 1}_{\{B_\theta~\mathrm{is}~\mathrm{not}~\mathrm{empty}\}}\uparrow{\mathsf 1}_{\{X_t(0,1)>0\}}$ as \mbox{$\theta\downarrow0$},
$\pr$-a.s., we complete the proof.
\end{pf}

Now we turn to the second part of the present subsection.
By construction of $\tilde{J}_{\eta,1}$, we know that  to the
left of every point $x\in\tilde{J}_{\eta,1}$ there
exist big jumps of $X$ at time $s$ ``close'' to $t$: such
jumps are  defined by  the events $A^{(n)}_k$. We would like to show
that these jumps  will result in destroying the H\"older continuity of any index greater than $\eta$
at the point $x$. To this end, we will introduce auxiliary processes $L^{\pm}_{n,l,r}$  that are indexed by a
grid \textit{finer} than $\{k2^{-n}, k=0,1,\ldots\}$. That is, take some integer $Q>1$ (note, that eventually
$Q$ will be chosen large enough, depending on $\eta$). According to Lemma~2.15 from \cite{FMW10} [see also~(\ref{eq17101}), (\ref{eq17102})],
there exist spectrally positive  $(1+\beta)$-stable
processes $L^\pm_{n,l,r}$ such that
%
\begin{eqnarray}
&& \wZ^{2,\eta}_s \bigl(l2^{-Qn},r2^{-Qn}
\bigr)= L^+_{n,l,r} \bigl(T^{n,l,r}_+(s) \bigr)-
L^-_{n,l,r} \bigl(T^{n,l,r}_{-}(s) \bigr),
\nonumber
\\[-8pt]
\label{eq18102}
\\[-8pt]
\eqntext{0\leq l<r\leq 2^{Qn}, 0\leq s \leq t,}
\end{eqnarray}
where
\[
T^{n,l,r}_\pm(s)=\int_0^s
du \int_{\mathbb{R}}X_u(dy) \bigl( \bigl(
\tilde{p}^{\alpha,\eta}_{t-u} \bigl(l2^{-Qn}-y,r2^{-Qn}-y
\bigr) \bigr)^{\pm} \bigr)^{1+\beta}, \qquad s\leq t.
\]
The goal\vspace*{1pt} of the remaining part of this
subsection is to show that, in fact, ``big'' jumps  of $X$ defined via $A^{(n)}_k$ imply
``big'' values of $L^{+}_{n,l,r}$ for certain $l,r$.

We need to introduce additional  notation related to the event $A^{(n)}_k$. If $A^{(n)}_k$ occurs, then
there is a jump of the process $X$  at time $s^n_k$ such that
%
\begin{equation}
\label{eq02121a} \bDelta X_{s^n_k} \bigl(I_{k-2n^q-2}^{(n)}
\bigr) \geq 2^{-(\eta+1)n},
\end{equation}
and
%
\begin{equation}
\label{eq02121} s^n_k\in\bigl[t-2^{-\alpha n}n^{-\alpha m},
t-2^{-\alpha(n+1)}(n+1)^{-\alpha m}\bigr).
\end{equation}
Let $y^n_k\in I^{(n)}_{k-2n^q-2}$ denote the spatial position of that jump. Now put
\[
l^n_k= \bigl\lfloor 2^{Qn}y^n_k
\bigr\rfloor, 
\]
and for every $x\in (0,1)$ define
\[
\tilde{k}_n(x)=\bigl\lfloor 2^{Qn}x\bigr\rfloor.
\]
To simplify notation, in what follows, for any $n,l,r$, we denote by $\bDelta L^+_{n,l, r}$ the maximal jump of $L^+_{n,l, r}$, that is,
\[
\bDelta L^+_{n,l, r}:= \sup_{s\leq t} \bDelta
L^+_{n,l,r}\bigl(T^{n,l,r}_+(s)\bigr).
\]
Also set
%
\begin{equation}
\label{171003} \bL^\pm_{n,l, r} := L^\pm_{n,l,r}
\bigl(T^{n,l,r}_{\pm}(t) \bigr).
\end{equation}

Now we explain briefly why we look at fine dyadic intervals
$(l2^{-Qn},r2^{-Qn})$. First, we can not work directly with
increments of $Z^2$  at random points $x\in\tilde{J}_{\eta,1}$.
However, if we show that H\"older continuity is destroyed
at $2^{-Qn}\tilde{k}_n(x)$, then we will be able to infer
that, at the point $x$, the H\"older continuity of any
index greater than $\eta$  is destroyed as well: to show this
we need  $Q$ to be sufficiently large. Also, on this
finer scale, we can show that $L^+_{n,l^n_k, r}$ has ``large''\vspace*{-1.5pt} jumps for
all $r$ which are close to $\tilde{k}_n(x)$. This property is quite
important because of a possible compensation effect, which will be
investigated in the next subsection.

In the next two lemmas, we start to fulfill the above program. In Lemma~\ref{lem321}, we show that on the event
$ A^{(n)}_k$,  $L^+_{n,l^n_k, r}$ has ``large'' jumps of order $2^{-\eta n}n^m$ for all $r$'s sufficiently close to $\tilde{k}_n(x)$ with
$x\in J^{(n)}_{k,1}$.
As a consequence, in Lemma~\ref{lower4}, we obtain, by pretty standard arguments, that $\bL^+_{n,l^{n}_{k}, r}$ can
also take ``big'' values of the same order, for certain  $n,l^{n}_{k}, r$.
%

\begin{lemma}
\label{lem321}
Let $\eta\in(\eta_c,\bar\eta_c)$, and fix an arbitrary integer $R>0$.
There exist constants $C_{(\fontsize{8.36}{10.36}{\selectfont\ref{eq420}})}$ and
$N_{\fontsize{8.36}{10.36}{\selectfont\ref{lem321}}}$ sufficiently large, such that for all
$n\geq N_{\fontsize{8.36}{10.36}{\selectfont\ref{lem321}}}$ and all
$r^n_k\in \{\tilde{k}_{n}(x)-R, \tilde{k}_{n}(x)-R+1,\ldots,\tilde{k}_{n}(x)\}$,
\begin{equation}\label{eq420}
\qquad A^{(n)}_k \subset \bigl\{\bDelta L^+_{n,l^n_k, r^n_k}\geq
C_{(\fontsize{8.36}{10.36}{\selectfont\ref{eq420}})} 2^{-\eta n} n^m, \forall x\in
J^{(n)}_{k,1} \bigr\},\qquad k=2n^q+2,\ldots,
2^n.\hspace*{-12pt}
\end{equation}
 \end{lemma}

\begin{pf}
Fix $n$ sufficiently large (to be  chosen later)  and $k\in \{2n^q+2,\ldots, 2^n\}$.
In what follows, we assume that ${A_{k}^{(n)}}$ occurs. Then we have to show that
%
\[
\bDelta L^+_{n,l^n_k, r^n_k}\geq C_{(\fontsize{8.36}{10.36}{\selectfont\ref{eq420}})} 2^{-\eta n}
n^m\qquad \forall x\in J^{(n)}_{k,1}.
\]
Fix an arbitrary $x\in J^{(n)}_{k,1}$. Recall that $(s^n_k,y^n_k)$ denotes a space-time
location of a jump of $X$ that appears in the definition of  ${A_{k}^{(n)}}$.
To simplify notation, to the end of the lemma,  we will  suppress the superindex $n$ in $l^n_k, r^n_k, y^n_k$.
If $A^{(n)}_k$ occurred, then
%
\begin{equation}
\label{eq328} \bDelta L^+_{n,l_k, r_k}\geq 2^{-(\eta+1)n} \bigl(
\tilde{p}^{\alpha,\eta}_{t-s_k} \bigl(l_k2^{-Qn}-y_k,r_k
2^{-Qn}-y_k \bigr) \bigr)_{+}.
\end{equation}
So to verify the lemma, we have to obtain a suitable strictly positive  lower bound for
$\tilde{p}^{\alpha,\eta}_{t-s_k}(l_k2^{-Qn}-y_k,r_k 2^{-Qn}-y_k)$.

 First, we will obtain a lower bound for $ p^{\alpha}_{t-s_k}(l_k 2^{-Qn}-y_k)$:
%
\begin{eqnarray}
\hspace*{7pt} p^{\alpha}_{t-s_k} \bigl(l_k 2^{-Qn}-y_k
\bigr)&=& (t-s_k)^{-1/\alpha} p_1
\bigl((t-s_k)^{-1/\alpha} \bigl(l_k
2^{-Qn}-y_k \bigr) \bigr)
\nonumber
\\[-8pt]
\label{eq325}
\\[-8pt]
\nonumber
&\geq & 2^{n} n^m p_1
\bigl((t-s_k)^{-1/\alpha} \bigl(l_k
2^{-Qn}-y_k \bigr) \bigr),
\end{eqnarray}
where the last inequality follows by (\ref{eq02121}). By definition of $l_k$, we get
%
\begin{equation}
\bigl|l_k 2^{-Qn}-y_k\bigr| \leq 2^{-Qn},
\end{equation}
and this with again  (\ref{eq02121}) and monotonicity of $p_1(\cdot)$ implies
%
\begin{equation}
\label{eq324} \hspace*{13pt}p_1 \bigl((t-s_k)^{-1/\alpha}
\bigl(l_k 2^{-Qn}-y_k \bigr) \bigr) \geq
p_1 \bigl(2^{-(Q-1)n+1}(n+1)^m \bigr)  \geq
p_1(1)
\end{equation}
for all $n$  sufficiently large. Let $N_{(\fontsize{8.36}{10.36}{\selectfont\ref{eq324}})}$ be sufficiently large such that (\ref{eq324})
 holds for all $n\geq N_{(\fontsize{8.36}{10.36}{\selectfont\ref{eq324}})}$.  Then~(\ref{eq325}), (\ref{eq324}) imply
%
\begin{eqnarray}
p^{\alpha}_{t-s_k} \bigl(l_k 2^{-Qn}-y_k
\bigr) &\geq & 2^{n} p_1(1) n^m
\nonumber
\\[-8pt]
\label{eq322}
\\[-8pt]
\nonumber
&=& C_{(\fontsize{8.36}{10.36}{\selectfont\ref{eq322}})}2^{n}n^m \qquad \forall
n\geq N_{(\fontsize{8.36}{10.36}{\selectfont\ref{eq324}})}.
\end{eqnarray}

Next, by definition of $A_{k}^{(n)},  V_{k}^{(n)}$,  we easily get, for all sufficiently large $n$,
%
\begin{equation}
-y_k+ r_k 2^{-Qn} \geq 2^{-n}
\bigl(2n^q+1 \bigr)- \bigl(2^{-n}n^q+R2^{-Qn}
\bigr)\geq 2^{-n-1}n^q.
\end{equation}
Use this and the bound on  $t-s_k$ to get
%
\begin{eqnarray}
\nonumber
p^{\alpha}_{t-s_k} \bigl(r_k
2^{-Qn}-y_k \bigr)&= & (t-s_k)^{-1/\alpha}
p_1 \bigl((t-s_k)^{-1/\alpha} \bigl(r_k
2^{-Qn}-y_k \bigr) \bigr)
\\
\nonumber
&\leq & C_{(\fontsize{8.36}{10.36}{\selectfont\ref{eq326}})}(t-s_k)^{-1/\alpha}
\bigl((t-s_k)^{-1/\alpha} \bigl(r_k
2^{-Qn}-y_k \bigr) \bigr)^{-\alpha-1}
\\
\label{eq2123} &=& C_{(\fontsize{8.36}{10.36}{\selectfont\ref{eq326}})} (t-s_k)
\bigl(r_k 2^{-Qn}-y_k \bigr)^{-\alpha-1}
\\
\nonumber
&\leq & C_{(\fontsize{8.36}{10.36}{\selectfont\ref{eq326}})} 2^{-\alpha n} n^{-\alpha m}2^{(n+1)(\alpha+1)}n^{-q(\alpha+1)}
\\
\nonumber
&= & C_{(\fontsize{8.36}{10.36}{\selectfont\ref{eq2123}})} 2^{n}n^{-\alpha m-q(\alpha+1)}.
\end{eqnarray}
Next, we will bound from above the quantity
\[
\bigl\llvert \bigl(l_k 2^{-Qn}-r_k
2^{-Qn}\bigr)p_{t-s_k}^{\alpha,\prime}\bigl(r_k
2^{-Qn}-y_k\bigr)\bigr\rrvert,
\]
where $p_t^{\alpha,\prime}(z):=
\frac{\partial p^{\alpha}(z)}{\partial z}$.
It is easy to check that
%
\begin{eqnarray}
\nonumber
\bigl\llvert p_{t-s_k}^{\alpha,\prime} \bigl(r_k
2^{-Qn}-y_k \bigr) \bigr\rrvert &\leq & C
(t-s_k)^{-2/\alpha} p_{1}^{\alpha}
\bigl((t-s_k)^{-1/\alpha} \bigl(r_k
2^{-Qn}-y_k \bigr)/2 \bigr)
\\
\label{22312} &=& C (t-s_k)^{-1/\alpha} p_{t-s_k}^{\alpha}
\bigl( \bigl(r_k 2^{-Qn}-y_k \bigr)/2
\bigr)
\\
\nonumber
&\leq & C_{(\fontsize{8.36}{10.36}{\selectfont\ref{22312}})} 2^{n} n^{m}
2^{n}n^{-\alpha m-q(\alpha+1)},
\end{eqnarray}
where the last inequality follows by~(\ref{eq2123}) and the bound on $t-s_k$.
Since
\[
\bigl|l_k 2^{-Qn}-r_k 2^{-Qn}\bigr|\leq 3
\cdot 2^{-n} n^q,
\]
this implies
%
\begin{equation}
\label{eq323}\hspace*{4pt}\quad \bigl\llvert \bigl(l_k 2^{-Qn}-r_k
2^{-Qn} \bigr)p_{t-s_k}^{\alpha,\prime} \bigl(r_k
2^{-Qn}-y_k \bigr) \bigr\rrvert \leq3C_{(\fontsize{8.36}{10.36}{\selectfont\ref{22312}})}
2^nn^{-m(\alpha-1)-q\alpha}.
\end{equation}
Then by definition of $\tilde{p}^{\alpha,\eta}_{t}$, (\ref{eq322}), (\ref{eq2123}),
(\ref{eq323}), we immediately get that there exists an $N_{(\fontsize{8.36}{10.36}{\selectfont\ref{eq327}})}
\geq N_{(\fontsize{8.36}{10.36}{\selectfont\ref{eq324}})}$, such that, for any
$\eta\in(\eta_c,\bar\eta_c)$,
%
\begin{eqnarray}
\nonumber
&& \tilde{p}^{\alpha,\eta}_{t-s_k} \bigl(l_k
2^{-Qn}-y_k,r_k 2^{-Qn}-y_k
\bigr)
\\
\label{eq327} && \qquad \geq C_{(\fontsize{8.36}{10.36}{\selectfont\ref{eq322}})}2^{n}n^m-C_{(\fontsize{8.36}{10.36}{\selectfont\ref{eq2123}})}
2^{n}n^{-m\alpha-q(\alpha+1)}- 3C_{(\fontsize{8.36}{10.36}{\selectfont\ref{22312}})} 2^{n}n^{-m(\alpha-1)-q\alpha}
\\
&&
\nonumber
\qquad \geq \tfrac{1}{2}C_{(\fontsize{8.36}{10.36}{\selectfont\ref{eq322}})}2^{n}n^m
\qquad \forall n\geq N_{(\fontsize{8.36}{10.36}{\selectfont\ref{eq327}})}.
\end{eqnarray}
Substitute the above lower bound into~(\ref{eq328}) and the result follows immediately.
\end{pf}

\begin{lemma}\label{lower4}
Let $\eta\in(\eta_c,\bar\eta_c)$, and fix an arbitrary integer $R>0$.
On $A^{\varepsilon}$,
for every $x\in \tilde{J}_{\eta,1}$ there exists a (random) sequence $\{(n_j,k_j)\}$, such that
\[
\bL^+_{n_j,l^{n_j}_{k_j}, r^{n_j}_{k_j}}\geq C 2^{-\eta n_j}n_j^{m}
\]
for all $r^{n_j}_{k_j}\in [\tilde{k}_{n_j}(x)-R,\tilde{k}_{n_j}(x)-R+1,\ldots,  \tilde{k}_{n_j}(x) ]$.
\end{lemma}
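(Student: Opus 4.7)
Plan: The strategy is to upgrade Lemma \ref{lem:3_2_1} (which guarantees a single jump of $L^+_{n,l^n_k,r}$ of size $\geq a:=c_1 2^{-\eta n}n^m$ whenever $A^{(n)}_k$ occurs) to a bound on the terminal value $\bL^+_{n,l^n_k,r}=L^+(T^{n,l^n_k,r}_+(t))$, by showing that the remainder of the spectrally positive $(1+\beta)$-stable process cannot compensate this isolated large jump. The implementation will be a Borel--Cantelli argument applied to the bad events $\{A^{(n)}_k\}\cap\{\bL^+_{n,l^n_k,r}<a/2\}$, summed over all triples $(n,k,r)$ with $r$ in a window $[\tilde{k}_n(x)-R,\tilde{k}_n(x)]$ for some $x\in V^{(n)}_k$. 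Writing $L^+=$ (big jump) $+$ (remainder) and applying the strong Markov property at the jump time, ``bad'' forces the remainder (still spectrally positive $(1+\beta)$-stable) to drop below $-a/2$ at the terminal time.

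For the tail estimate, the crucial input is bound \eqref{eq:time_1} applied on $A^\varepsilon$ with $|l^n_k-r|2^{-Qn}\leq Cn^q 2^{-n}$ (which follows from $r$ lying in the stated window and $y^n_k\in I^{(n)}_{k-2n^q-2}$): it yields $t^\ast:=T^{n,l^n_k,r}_+(t)\leq C(n^q 2^{-n})^{\alpha-\beta-\varepsilon_1}$. The strict inequality $\eta<\bar\eta_c=(\alpha-\beta)/(1+\beta)$ then translates into an exponential gap between the jump size $a$ and the natural fluctuation scale, $a/(t^\ast)^{1/(1+\beta)}\geq C\cdot 2^{n(\bar\eta_c-\eta-\varepsilon_2)}$ for some small $\varepsilon_2>0$. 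Using the Laplace transform \eqref{Laplace} via exponential Chebyshev (or, equivalently, Lemma \ref{L3} with a carefully tuned truncation $y=a\cdot 2^{-n\mu}$ for $\mu\in(0,\rho/(1+\beta))$ with $\rho:=1-(1+\beta)(\eta-\eta_c)>0$), the left tail of the spectrally positive stable remainder is very light:
\[
\mathbf{P}\bigl(\bL^+_{n,l^n_k,r}<a/2\bigm|A^{(n)}_k\bigr)\;\leq\;C\exp\bigl(-c\cdot 2^{n\nu}\bigr)
\]
for some $\nu>0$. Combined with the compensator-based estimate $\mathbf{P}(A^{(n)}_k)\leq C 2^{-n\rho}n^{-\alpha m}$ (read off from \eqref{decomp} and the definition of $A^{(n)}_k$), each bad triple has double-exponentially small probability.

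The total number of triples $(n,k,r)$ with $r$ relevant to some $x\in V^{(n)}_k$ is $O(n^q 2^{Qn})$ per level $n$, trivially dominated by the double-exponential decay, so the Borel--Cantelli sum converges and almost surely on $A^\varepsilon$ only finitely many bad triples occur. For any $x\in\widetilde V_\eta$, the $\limsup$-structure of $\widetilde V_\eta$ supplies infinitely many $(n,k)$ with $x\in V^{(n)}_k$ and $A^{(n)}_k$; discarding the finite exceptional set yields the required subsequence $(n_j,k_j)$ along which $\bL^+_{n_j,l^{n_j}_{k_j},r}\geq a/2$ holds simultaneously for every $r\in[\tilde{k}_{n_j}(x)-R,\tilde{k}_{n_j}(x)]$. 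The main obstacle will be producing a tail estimate strong enough to beat $2^{Qn}$; the bound $(Ct^\ast/xy^\beta)^{x/y}$ from Lemma \ref{L3} with $y\asymp a$ decays only polynomially in $2^n$, so one must exploit the light left tail of spectrally positive stable distributions, which is made available precisely by the strict inequality $\eta<\bar\eta_c$ through the exponential gap between $a$ and $(t^\ast)^{1/(1+\beta)}$.
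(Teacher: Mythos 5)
Your proposal is correct and follows essentially the same route as the paper's proof: the big jump supplied by Lemma \ref{lem:3_2_1}, the time-change bound \eqref{eq:time_1} with $|l-r|2^{-Qn}\lesssim n^q2^{-n}$, the very light left tail of the spectrally positive $(1+\beta)$-stable process (the paper invokes Lemma 2.4 of \cite{FMW10}, which is exactly the exponential-Chebyshev bound via \eqref{Laplace} that you describe, rather than Lemma \ref{L3}), then Borel--Cantelli over the polynomially many admissible indices and the $\limsup$ structure of $\widetilde{V}_{\eta}$. The only differences are cosmetic: the paper avoids conditioning on $A^{(n)}_k$ (and the factor $\mathbf{P}(A^{(n)}_k)$, which is not needed) by including the bad event directly into $\bigl\{\inf_{s\leq \hat{T}^{n,l,r}}L^+_{n,l,r}(s)\leq -C 2^{-\eta n-1}n^m\bigr\}$ and taking a union over the deterministic pairs $(l,r)$ compatible with $I^{(n)}_k$ and $V^{(n)}_k$.
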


\begin{pf}
Recall~(\ref{eqtime1}) to get that on $A^\varepsilon$, and for any $l,r$,
%
\begin{equation}
\label{eq181012} T^{n,l,r}_{+}(t) \leq \hat{T}^{n,l,r}
\bigl(l2^{-Qn},r2^{-Qn} \bigr)=C_{(\fontsize{8.36}{10.36}{\selectfont\ref{eqtime1}})}
\bigl(|r-l|2^{-Qn} \bigr)^{\alpha-\beta-\varepsilon_{1}},
\end{equation}
and take $\epsilon_1<(\bar{\eta}_c-\eta)(\beta+1)/2$. This, Lemma~\ref{lem321} and
 Lemma~2.4 from \cite{FMW10}, imply that for all $n$ sufficiently large, and for any $k\in [2n^q+2, 2^n-1]$,
\begin{eqnarray*}
&& \mathbf{P} \bigl(A^\varepsilon\cap A^{(n)}_k \cap
\bigl\{\bL_{n,l^n_k,r^n_k}^+\leq C_{(\fontsize{8.36}{10.36}{\selectfont\ref{eq420}})} 2^{-\eta n-1}n^{m}
\bigr\} \bigr)
\\
&& \qquad \leq \mathbf{P} \bigl(A^\varepsilon\cap \bigl\{ \bDelta
L^+_{n,l^n_k,r^n_k}\geq C_{(\fontsize{8.36}{10.36}{\selectfont\ref{eq420}})} 2^{-\eta n} n^m
\bigr\} \cap \bigl\{\bL_{n,l^n_k,r^n_k}^+\leq C_{(\fontsize{8.36}{10.36}{\selectfont\ref{eq420}})}
2^{-\eta n-1}n^{m} \bigr\} \bigr)
\\
&& \qquad\leq \mathop{\sum_{l\dvtx 2^{-Qn}l\in I^{(n)}_k,}}_{ r \dvtx  2^{-Qn}r\in J^{(n)}_{k,1}} \mathbf{P}
\bigl(A^\varepsilon
\cap \bigl\{ \bDelta L^+_{n,l, r}\geq
C_{(\fontsize{8.36}{10.36}{\selectfont\ref{eq420}})} 2^{-\eta n} n^m \bigr\}
\\
&& \hspace*{76pt}\qquad \quad
{}\cap \bigl\{
\bL_{n,l,r}^+\leq C_{(\fontsize{8.36}{10.36}{\selectfont\ref{eq420}})} 2^{-\eta n-1}n^{m}
\bigr\} \bigr)
\\
&&\qquad \leq \mathop{\sum_{l \dvtx  2^{-Qn}l\in I^{(n)}_k,}}_{r \dvtx  2^{-Qn}r\in J^{(n)}_{k,1}} \mathbf{P} \Bigl(\inf
_{s\leq \hat{T}^{n,l,r}(l2^{-Qn},r2^{-Qn})} L_{n,l,r}^+(s)\leq - C_{(\fontsize{8.36}{10.36}{\selectfont\ref{eq420}})}
2^{-\eta n-1}n^{m} \Bigr)
\\
&& \qquad \leq 2^{(Q-1)n}5n^q\exp \bigl\{-c2^{(\bar{\eta}_c-\eta)(\beta+1)n/(2\beta)}
\bigr\}.\\[-26pt]
\end{eqnarray*}
Consequently,
\begin{eqnarray*}
&& \mathbf{P} \bigl(A^\varepsilon\cap A^{(n)}_k\cap
\bigl\{\bL_{n,l^n_k,r^n_k}^+\leq C_{(\fontsize{8.36}{10.36}{\selectfont\ref{eq420}})} 2^{-\eta n-1}n^{m}
\bigr\} \bigr)
\\
&& \qquad \leq 2^{(2Q-1)n}5n^q\exp \bigl\{-c2^{(\bar{\eta}_c-\eta)(\beta+1)n/(2\beta)}
\bigr\}
\\
&& \qquad \leq \exp \bigl\{-2^{(\bar{\eta}_c-\eta)(\beta+1)n/(4\beta)} \bigr\},
\end{eqnarray*}
for all $n$ sufficiently large.
Using Borel--Cantelli, we get that with probability one
\[
\bigcup_{k=2n^q+2}^{2^n-1}\bigl
\{A^\varepsilon\cap A^{(n)}_k\cap\bigl\{
\bL_{n,l^n_k,r^n_k}^+\leq C_{(\fontsize{8.36}{10.36}{\selectfont\ref{eq420}})} 2^{-\eta n-1}n^{m}
\bigr\}\bigr\}
\]
occurs only a finite number of times. Let $x\in \tilde{J}_{\eta,1}$ be arbitrary.
By definition of $\tilde{J}_{\eta,1}$, there
exists a (random) sequence $\{(n_j, k_j)\}$ such that
\[
x\in J^{(n_j)}_{k_j,1}\quad \mbox{and}\quad
\1_{A_{k_j}^{(n_j)}}=1 \qquad \forall j\geq 1.
\]
Therefore, on the set $A^{\varepsilon}$ we have
\[
\bL_{n,l_{k_j}^{n_j},r_{k_j}^{n_j}}^+ > C_{(\fontsize{8.36}{10.36}{\selectfont\ref{eq420}})} 2^{-\eta n_j-1}n_j^{m},
\]
for all $j$ sufficiently large and all $r_{n_j}^{k_j}\in [\tilde{k}_{n_j}(x)-R,\tilde{k}_{n_j}(x) ]$.
\end{pf}

\subsection{Effect of compensation}
\label{sec4.3}

If we recall~(\ref{eq18102}), then Lemma~\ref{lower4} implies that it is maybe possible
to destroy the H\"older continuity, of any index greater than $\eta$, of the process
on the set $\tilde J_{\eta,1}$. For this purpose, we use processes $L^+_{n,k,l}$.
It is also clear from~(\ref{eq18102}) that in addition
one should show
that (loosely speaking) on a ``significant'' part of  $\tilde J_{\eta,1}$ there is no compensation of
``big'' values of $L^+$  by ``big'' values of $L^-$.

First, fix arbitrary positive constants $\rho, c, \nu$ such that
%
\begin{equation}
\label{eq1071} \hspace*{4pt}\rho < 10^{-2} \gamma, \qquad \nu\in \biggl(
\frac{\alpha\gamma+5\rho}{\eta_{\mathrm{c}}},10^{-1} \biggr),\qquad c\in \biggl(
\frac{10}{2-\eta}, \frac{1}{10\rho} \biggr).
\end{equation}

Define
%
\begin{eqnarray}
\nonumber
G_{k}^{(n)} &:=& \biggl\{\mbox{there exist at
least two jumps of }M,\mbox{ of the form $r\delta_{(s,y)}$,}
\\
&& \hspace*{6pt}\label{eq18104} \mbox{satisfying } r\geq 2^{-(\eta+1+2\rho+2c\rho)n}, s
\in\bigl[t-2^{-\alpha(1-c\rho)n},t-2^{-\alpha(1+c\rho)n}\bigr)
\\
\nonumber
&& \hspace*{60pt}\mbox{and }y\in \biggl[\frac{k}{2^n}-2^{-n(1-c\rho)(1-\nu)},
\frac{k+1}{2^n}+2^{-n(1-c\rho)(1-\nu)} \biggr] \biggr\}
\end{eqnarray}
and
\[
\tilde{G}_\eta:=\limsup_{n\to\infty}\bigcup
_{k=0}^{2^n-1} I^{(n)}_k{
\mathsf 1}_{G_k^{(n)}}.
\]
Informally, $\tilde{G}_{\eta}$ is such that in certain proximity of every
$x\in\tilde{G}_{\eta}$ there are at least two ``big'' jumps of $M$. If
one of the jumps, appears in $L^+$, and another in $L^-$, they may compensate
each other; however, in the next lemma we will show that the Hausdorff dimension
of $\tilde G_{\eta}$ is small.

\begin{lemma}
\label{compens}
On $A^{\varepsilon}$,
\[
\dim(\tilde{G}_\eta)\leq \bigl(2(\beta+1) (\eta-
\eta_c)-1 \bigr)^++2\bigl(\nu+8(c+1)\rho\bigr),\qquad \pr\mbox{-a.s.}
\]
\end{lemma}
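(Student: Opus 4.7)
The plan is a standard expected-covering argument based on the compensator formula and Borel--Cantelli, in the spirit of the proof of Lemma~\ref{P2}. For each scale $n$ and index $k$, let $N_k^{(n)}$ denote the number of atoms $(s,y,r)$ of $\cN$ with $r\geq r_n:=2^{-(\eta+1+2\rho+2c\rho)n}$, $s$ in the time window $[t-2^{-\alpha(1-c\rho)n},t-2^{-\alpha(1+c\rho)n})$ of length $\tau_n:=2^{-\alpha(1-c\rho)n}$, and $y$ in the spatial window $W_k^{(n)}$ of length $\ell_n:=2\cdot 2^{-n(1-c\rho)(1-\nu)}+2^{-n}$. Then $G_k^{(n)}=\{N_k^{(n)}\geq 2\}$. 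The goal is to bound $\mathbf{P}(G_k^{(n)}\cap A^{\varepsilon})$ by $\Lambda_n^2$ (with $\Lambda_n$ an upper bound on the compensator mass of the region), sum over $k$, and then cover $\widetilde{G}_\eta$ by the intervals $I_k^{(n)}$ for which $\mathsf{1}_{G_k^{(n)}}=1$.

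First I would establish the intensity bound. By~(\ref{decomp}), the compensator mass of the relevant space-time set, conditional on $X_\cdot$, equals
\begin{equation*}
\lambda_k^{(n)} \,=\, \int_{\text{time}}\!ds\int_{W_k^{(n)}}\!X_s(dy)\int_{r_n}^\infty\!\varrho\,r^{-2-\beta}dr \,\leq\, C\,\tau_n\,r_n^{-(1+\beta)}\sup_{s}X_s(W_k^{(n)}).
\end{equation*}
Since $\nu>0$ we have $\ell_n\geq \tau_n^{1/\alpha}$, so from the lower bound $p_{\tau}^\alpha(z)\geq c\,\tau^{-1/\alpha}$ for $|z|\leq \tau^{1/\alpha}$ together with the definition of $V$ in~(\ref{eq:08_02_1}) and the bound $V\leq C_{(\ref{eq:08_02_3})}$ on $A^{\varepsilon}$, a sub-partitioning argument gives $X_s(W_k^{(n)})\leq C\,V\,\ell_n$ for every $s$ in the time window. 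Consequently, on $A^{\varepsilon}$,
\begin{equation*}
\lambda_k^{(n)}\,\leq\,\Lambda_n\,:=\,C\,\tau_n\,\ell_n\,r_n^{-(1+\beta)}\,=\,C\,2^{nA},
\end{equation*}
with $A:=(\eta+1+2\rho+2c\rho)(1+\beta)-\alpha(1-c\rho)-(1-c\rho)(1-\nu)$.

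Next, conditionally on $X_\cdot$, the random measure $\cN$ is Poisson with intensity $\hat{\cN}$, so $\mathbf{P}(N_k^{(n)}\geq 2\mid X_\cdot)\leq (\lambda_k^{(n)})^2/2$. Taking expectation on $A^{\varepsilon}$ and summing over $k\in\{0,\dots,2^n-1\}$ yields
\begin{equation*}
\mathbf{E}\Big[\mathsf{1}_{A^{\varepsilon}}\sum_{k=0}^{2^n-1}\mathsf{1}_{G_k^{(n)}}\Big]\,\leq\,C\cdot 2^n\Lambda_n^2\,=\,C\cdot 2^{n(1+2A)}.
\end{equation*}
Since $\widetilde{G}_\eta\subseteq \bigcup_{n\geq N}\bigcup_k I_k^{(n)}\mathsf{1}_{G_k^{(n)}}$ for every $N$, for any $s>1+2A$ the $s$-dimensional Hausdorff content satisfies
\begin{equation*}
\mathbf{E}\bigl[\mathsf{1}_{A^{\varepsilon}}\mathcal H^s_{2^{-N}}(\widetilde{G}_\eta)\bigr]\,\leq\,C\sum_{n\geq N}2^{n(1+2A-s)}\,\longrightarrow\,0\quad\text{as }N\to\infty,
\end{equation*}
so $\dim(\widetilde{G}_\eta)\leq 1+2A$ almost surely on $A^{\varepsilon}$. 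A direct computation using $\alpha=(1+\eta_c)(1+\beta)$ and the bounds $\alpha<2$, $\beta<1$, $\nu<1$ gives
\begin{equation*}
1+2A\,=\,2(\beta+1)(\eta-\eta_c)-1+2\nu+4(1+\beta)(1+c)\rho+2\alpha c\rho+2c\rho(1-\nu),
\end{equation*}
and the correction terms are bounded by $16(c+1)\rho$, so $1+2A\leq 2(\beta+1)(\eta-\eta_c)-1+2\nu+16(c+1)\rho$. Combining this with the trivial bound $\dim(\widetilde{G}_\eta)\geq 0$ yields the claimed inequality with the positive part $(\cdot)^+$.

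The main obstacle is the intensity estimate on $A^{\varepsilon}$, i.e.\ converting the smoothed density bound $S^{\alpha}_{2^\alpha(t-s)}X_s(x)\leq V$ into a linear-in-length bound $X_s(W_k^{(n)})\leq CV\ell_n$. This is precisely where the parameter $\nu>0$ in the definition of $G_k^{(n)}$ enters: without it the spatial window could be narrower than the smoothing scale $(t-s)^{1/\alpha}$ and the lower bound on $p_\tau^\alpha$ would fail. Once this is in place, the remainder is a routine second-moment/Borel--Cantelli covering estimate essentially identical to the one carried out in Lemma~\ref{P2}.
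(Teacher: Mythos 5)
Your proposal follows essentially the same route as the paper: bound the compensator mass of the space--time--jump-size region defining $G_k^{(n)}$, use that the probability of at least two points in such a region is at most the square of that mass, count the intervals $I_k^{(n)}$ with $\mathsf{1}_{G_k^{(n)}}=1$, and read off the dimension from the covering sum; your exponent bookkeeping ($1+2A$) matches the paper's $2(\beta+1)(\eta-\eta_c)-1+2\delta$ with $\delta=\nu+7(c+1)\rho$, and your handling of the positive part is equivalent to the paper's separate treatment of the case $2(\beta+1)(\eta-\eta_c)+2\delta<1$ (where $\widetilde G_\eta=\emptyset$). Two execution differences are worth noting: you control $X_s(W_k^{(n)})$ via the smoothed-density bound $V\leq C_{(\ref{eq:08_02_3})}$ on $A^\varepsilon$ together with a kernel lower bound, whereas the paper invokes the event $O_n$ and Lemma~\ref{lower0}; and you run a first-moment bound on the expected Hausdorff content rather than Markov plus Borel--Cantelli on the counts. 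Both variants are fine and arguably cleaner.

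One step is justified incorrectly as written: it is not true that ``conditionally on $X_\cdot$, the random measure $\cN$ is Poisson with intensity $\hat{\cN}$'' --- the atoms of $\cN$ are exactly the jumps of $X$, so conditioning on the path of $X$ leaves no randomness in $\cN$ at all. The correct justification (and the one the paper uses) is the time-change representation: the counting process of atoms in the predictable region can be written as $\Pi(A_u)$ for a standard Poisson process $\Pi$ and the (random) compensator mass $A_u$, so that $\{N_k^{(n)}\geq 2\}\cap\{\lambda_k^{(n)}\leq\Lambda_n\}\subseteq\{\Pi(\Lambda_n)\geq 2\}$ and hence $\mathbf{P}(G_k^{(n)}\cap A^\varepsilon)\leq\Lambda_n^2/2$. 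With this substitution your computation goes through unchanged, so this is a wrong justification of a correct inequality rather than a structural gap.
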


\begin{pf}
On the event $O_n^{c}$, we have the following upper bound for the intensity of the jumps in $G^{(n)}_k$:
\begin{eqnarray*}
&& \int_{t-2^{-\alpha(1-c\rho)n}}^{t-2^{-\alpha(1+c\rho)n}}\,ds X_s \biggl(
\biggl[\frac{k}{2^n}-2^{-n(1-c\rho)(1-\nu)},\frac{k+1}{2^n}+2^{-n(1-c\rho)(1-\nu)}
\biggr] \biggr)
\\
&& \quad {}\times\int_{2^{-(\eta+1+2\rho+2c\rho)n}} \varrho r^{-2-\beta}
\,dr
\\
&& \qquad \leq C 2^{n-n(1-c\rho)(1-\nu)}2^{-n}n^{2m}2^{-\alpha(1-\rho c)n}2^{(\beta+1)(\eta+1+2\rho+2c\rho)}
\\
&& \qquad \leq C 2^{-n+n(\beta+1)(\eta-\eta_c)+\delta n},
\end{eqnarray*}
where $\delta=\nu+7(c+1)\rho$.
Since the number of such jumps can be represented by means of a time-changed standard Poisson process,
the probability to have  at least two such jumps is bounded by the square of the above bound, that is,
\[
\mathbf{P}\bigl(O_n^{c}\cap G^{(n)}_k
\bigr)\leq C2^{-2n+2n(\beta+1)(\eta-\eta_c)+2\delta n}=:p^{(n)}.
\]
Combining this bound with Lemma~\ref{lower0} and the Markov inequality, we get
\begin{eqnarray*}
\mathbf{P} \Biggl(\sum_{k=0}^{2^{n}-1}{\mathsf
1}_{G_k^{(n)}}\geq 2^{n+\varepsilon n}p^{(n)} \Biggr) &\leq &
\mathbf{P}(O_n)+ \mathbf{P} \Biggl(\sum_{k=0}^{2^{n}-1}{
\mathsf 1}_{G_k^{(n)}}\geq 2^{n+\varepsilon n}p^{(n)};O_n^{c}
\Biggr)
\\
&\leq & Cn^{-m\alpha/3}+\frac{2^n\mathbf{P}(G^{(n)}_1)}{2^{n+\varepsilon n}p^{(n)}}
\\
&\leq & Cn^{-m\alpha/3}+2^{-\varepsilon n}.
\end{eqnarray*}

If $2(\beta+1)(\eta-\eta_c)+2\delta<1$, then, choosing $\varepsilon$ sufficiently small, we obtain
\[
\mathbf{P} \Biggl(\sum_{k=0}^{2^{n}-1}{\mathsf
1}_{G_k^{(n)}}\geq 1 \Biggr) \leq C2^{-m n}+2^{-\varepsilon n}.
\]
Applying finally Borel--Cantelli, we conclude that $\tilde{G}_\eta=\varnothing$ almost surely.
In particular, $\dim (\tilde{G}_\eta)=0$ with probability one.

Assume now that $2(\beta+1)(\eta-\eta_c)+2\delta\geq1$. Applying Borel--Cantelli once again, we see that
the number of indices $k$ with ${\mathsf 1}_{G_k^{(n)}}=1$ is bounded by $2^{n+\varepsilon n}p^{(n)}$.
Noting that $\tilde{G}_\eta$ can be covered by $\bigcup_{k=1}^{2^n} I^{(n)}_k{\mathsf 1}_{G_k^{(n)}}$ and
\[
\sum_{n=1}^\infty 2^{n+\varepsilon n}p^{(n)}2^{-\theta n}<
\infty \qquad \mbox{for all }\theta>2(\beta+1) (\eta-\eta_c)+2
\delta-1+\varepsilon,
\]
we infer that
\[
\operatorname{dim}(\tilde{G}_\eta)\leq2(\beta+1) (\eta-
\eta_c)+2\delta-1+\varepsilon.
\]
Letting $\varepsilon\to0$, we get the desired result.
\end{pf}

The remaining part of the subsection is devoted to the proof of the fact that,
on the set $(\tilde{J}_{\eta,1}\setminus S_{\eta-2\rho})\setminus\tilde{G}_{\eta}$
``enough'' of  big values of
$\bL^+_{n,l,k}$ cannot be compensated by $\bL^-_{n,l,k}$. This will lead later to the
desired
upper bound for H\"older exponents for points from
$(\tilde{J}_{\eta,1}\setminus S_{\eta-2\rho})\setminus\tilde{G}_{\eta}$.

As usual, we start with the analysis of jumps. For $N\geq 1$, define
\begin{eqnarray}
F^{(N)}_{k} &:=& \bigl\{\bDelta X_s(y)\geq
(t-s)^{{1}/({1+\beta})-\gamma}\bigl|y-(k+1)2^{-N}\bigr|^{\eta-\eta_c}
\nonumber\\
&&\hspace*{78pt}{}\mbox{for some } s\geq t-2^{-\alpha N}, y\in I_k^{(N)}
\bigr\},\nonumber\\
\eqntext{k=0,\ldots, 2^{N}-1,}
\end{eqnarray}
and
\[
F^{(N)}:=\bigcup_{k=0}^{2^{N}-1}
\biggl(\bigcap_{j<R}F_{k+j}^{(N)}
\biggr).
\]
Note that we use $N$ (and not $n$) above, and at some point we will take $N=Qn$.

\begin{lemma}
\label{lastlemma}
For every $R>(1-(1+\beta)(\eta-\eta_c))^{-1}$,
there exists a constant $C=C(R)$ such that
\[
\mathbf{P}\bigl(F^{(N)}\bigr)\leq C N^{-m\alpha/3+1}.
\]
\end{lemma}

\begin{pf}
It follows from Lemma~\ref{lower0} that
\[
\mathbf{P} \biggl(\bigcup_{n\geq N}O_n \biggr)\leq\sum
_{n=N}^\infty\mathbf{P}(O_n)\leq
CN^{-m\alpha/3+1}.
\]
Therefore,
%
\begin{eqnarray}
\mathbf{P} \bigl(F^{(N)} \bigr)&\leq& \mathbf{P} \biggl(F^{(N)}
\cap \biggl(\bigcap_{n\geq N}O_n^c \biggr) \biggr) +
\mathbf{P} \biggl(\bigcup_{n\geq N}O_n \biggr)
\nonumber
\\[-8pt]
\label{ll1}
\\[-8pt]
\nonumber
&\leq & \sum_{k=0}^{2^N-R-1}
\mathbf{P} \biggl( \biggl(\bigcap_{j<R}F_{k+j}^{(N)}
\biggr)\cap \biggl( \bigcap_{n\geq N}O_n^c \biggr) \biggr)
+CN^{-m\alpha/3+1}.
\end{eqnarray}
Consider a jump characterized by the triple $(y,s,r)$. We first assume that
\[
-(t-s)^{1/\alpha}<y-(k+1)2^{-n}<0.
\]
This jump affects $F_k^{(N)}$ if and only if $r>(t-s)^{{1}/({1+\beta})-\gamma+
({\eta-\eta_c})/{\alpha}}$.
If we consider $s\in[t-2^{-\alpha j}, t-2^{-\alpha(j+1)})$, then $r$ should be greater than\break
$2^{-\alpha(j+1)({1}/({1+\beta})-\gamma+({\eta-\eta_c})/{\alpha})}$.
Since
\[
\sup_{s\geq t-2^{-\alpha j}}X_u \bigl(\bigl[(k+1)2^{-n}-2^{-j},(k+1)2^{-n}
\bigr) \bigr)\leq j^{2m}2^{-j}
\]
on the event $O_j^c$, we have the following bound for the intensity of jumps described above:
\begin{eqnarray}
\nonumber
&&\int_{t-2^{-\alpha j}}^{t-2^{-\alpha (j+1)}} duX_u
\bigl(\bigl[(k+1)2^{-n}-2^{-j},(k+1)2^{-n} \bigr) \bigr)
\\
\nonumber
&&\quad {}\times\int_{2^{-\alpha(j+1)({1}/({1+\beta})-\gamma+({\eta-\eta_c})/{\alpha})}}^\infty
\varrho r^{-2-\beta}\,dr
\nonumber
\\[-8pt]
\label{ll2}
\\[-8pt]
\nonumber
&& \qquad \leq Cj^{2m}2^{-j(\alpha+1)}2^{j(\alpha-\gamma\alpha(\beta+1)+(\eta-\eta_c)(\beta+1)}
\\
&&\qquad =Cj^{2m}2^{-j(1-(\eta-\eta_c)(\beta+1))-j\gamma\alpha(\beta+1)}.
\nonumber
\end{eqnarray}
If $(k+1)2^{-N}-y\in[a2^{-j},(a+1)2^{-j})$ with some $a\geq1$, then $r$ should\vspace*{2pt} be bigger than
$2^{-\alpha(j+1)({1}/({1+\beta})-\gamma)}a^{\eta-\eta_c}2^{-j(\eta-\eta_c)}$. Then, on the event $O_j^c$,
\begin{eqnarray}
\nonumber
&& \int_{t-2^{-\alpha j}}^{t-2^{-\alpha (j+1)}}
\,duX_u \bigl(\bigl[a2^{-j},(a+1)2^{-j} \bigr) \bigr)
\\
\nonumber
&&\quad {}\times\int_{2^{-\alpha(j+1)({1}/({1+\beta})-\gamma)}a^{\eta-\eta_c}2^{-j(\eta-\eta_c)}}^\infty
\varrho r^{-2-\beta}\,dr
\nonumber
\\[-8pt]
\label{ll3}
\\[-8pt]
\nonumber
&& \qquad \leq Cj^{2m}2^{-j(\alpha+1)}2^{j(\alpha-\gamma\alpha(\beta+1)+(\eta-\eta_c)(\beta+1)}a^{-(\eta-\eta_c)(\beta+1)}
\\
\nonumber
&& \qquad =Cj^{2m}2^{-j(1-(\eta-\eta_c)(\beta+1))-j\gamma\alpha(\beta+1)}a^{-(\eta-\eta_c)(\beta+1)}.
\end{eqnarray}
Combining (\ref{ll2}), (\ref{ll3}) and noting that we can cover the interval $I_k^{(N)}$ by
the union of intervals
$[(k+1)2^{-N}-(a+1)2^{-j}, (k+1)2^{-N}-a2^{-j})$ with $a<2^{j-N}$, we see that the intensity of jumps with
$y\in I_k^{(N)}$, $s\geq t-2^{-\alpha N}$ is bounded by\looseness=1
\begin{eqnarray*}
&& \sum_{j=N}^\infty Cj^{2m}2^{-j(1-(\eta-\eta_c)(\beta+1))-j\gamma\alpha(\beta+1)}
\Biggl(1+\sum_{a=1}^{2^{j-N}-1}a^{-(\eta-\eta_c)(\beta+1)}
\Biggr)
\\
&& \qquad \leq C\sum_{j=N}^\infty
j^{2m}2^{-j(1-(\eta-\eta_c)(\beta+1))-j\gamma\alpha(\beta+1)} \bigl(2^{j-N} \bigr)^{1-(\eta-\eta_c)(\beta+1)}
\\
&& \qquad =C \bigl(2^{-N} \bigr)^{1-(\eta-\eta_c)(\beta+1)}\sum
_{j=N}^\infty j^{2m}2^{-j\gamma\alpha(\beta+1)}
\\
&& \qquad \leq C \bigl(2^{-N} \bigr)^{1-(\eta-\eta_c)(\beta+1)}.
\end{eqnarray*}
This implies that
\[
\mathbf{P} \biggl(F_k^{(N)}\cap \biggl(
\bigcap_{n\geq N}O_n^c \biggr) \biggr)\leq C
\bigl(2^{-N} \bigr)^{1-(\eta-\eta_c)(\beta+1)}.
\]
Since the jumps can be represented by a time-changed Poisson process, we then get
\[
\mathbf{P} \biggl( \biggl(\bigcap_{j<R}F_{k+j}^{(N)}
\biggr)\cap \biggl(\bigcap_{n\geq N}O_n^c \biggr) \biggr)
\leq C\bigl(2^{-N} \bigr)^{R(1-(\eta-\eta_c)(\beta+1))}.
\]
Applying this bound to summands in (\ref{ll1}), we complete the proof of the lemma.
\end{pf}

From this lemma and the Borel--Cantelli lemma, we obtain:

\begin{corollary}\label{corrR}
Let $R$ be as in the previous lemma.
For $\pr$-a.s. $\omega\in A^{\varepsilon}$ there exists
$N_{\fontsize{8.36}{10.36}{\selectfont\ref{corrR}}}=N_{\fontsize{8.36}{10.36}{\selectfont\ref{corrR}}}(\omega)$ such
that for every $N\geq N_{\fontsize{8.36}{10.36}{\selectfont\ref{corrR}}}$ and every
$k \dvtx  R\leq  k<2^N$ there exists $j=j(k,N)\in\{1,\ldots, R\}$
with $1_{F_{k-j}^{(N)}}=0$.
\end{corollary}

We need to introduce additional  notation. Let
%
\begin{equation}
k_n(x):= \bigl\lfloor 2^n x \bigr\rfloor.
\end{equation}
Recall $A^{(n)}_{k_n(x)}$, and   let  $\tilde s=s^n_{k_n(x)}$ [see (\ref{eq02121a}), (\ref{eq02121})]
be the time and $\tilde y=y^n_{k_n(x)}$ [defined below (\ref{eq02121})]
be the spatial  position of a jump described in the definition of the event $A^{(n)}_{k_n(x)}$. Then on
$A^{(n)}_{k_n(x)}$, fix
%
\begin{equation}
\label{eq2071} \tilde{l}_n(x):= \bigl\lfloor 2^{Qn}
\tilde y \bigr\rfloor.
\end{equation}
Moreover, since $Q>1$, for every $n\geq N_{\fontsize{8.36}{10.36}{\selectfont\ref{corrR}}}$ we can  define
%
\begin{equation}
\label{eq2072} \tilde{r}_n(x) = \tilde{k}_n(x)-j \bigl(
\tilde{k}_n(x), Qn \bigr),
\end{equation}
where $j(\cdot, \cdot)$ is defined in Corollary~\ref{corrR}, and recall that $ \tilde k_n(x)= k_{Qn}(x)$.

\begin{remark}
 Note that above definition of $\tilde l_n(x)$ and especially the construction of $\tilde r_n(x)$
 are crucial for the proof\hspace*{1pt} of the lower bound. In the sequel, we will show
 that for $x\in (\tilde{J}_{\eta,1}\setminus S_{\eta-2\rho})\setminus\tilde{G}_{\eta}$, there exists
  subsequence $\{n_j\}$ such that big values of
$\bL^+_{n_j,\tilde l_{n_j}(x),\tilde r_{n_j}(x)}$ are not compensated by
$\bL^-_{n_j,\tilde l_{n_j}(x),\tilde r_{n_j}(x)}$ (see Lemma~\ref{compens2} below and Lemma~\ref{lower4} above).
\end{remark}

We also will define three sets in $[0,t)\times \R$. For any $x\in \R$, set
\begin{eqnarray*}
S^1_{n,x}&:=& \bigl\{(s,y)\in [0,t)\times \R \dvtx |y-x|
\geq (t-s)^{(1-\nu)/\alpha} \bigr\},
\\
S^2_{n,x}&:=& \bigl\{(s,y)\in \bigl( [0,t )\setminus
\bigl(t-2^{-\alpha(1-c\rho)n},t-2^{-\alpha(1+c\rho)n} \bigr)\bigr)\times \R\dvtx
\\
&& \hspace*{120pt}\qquad |y-x|\leq (t-s)^{(1-\nu)/\alpha} \bigr\},
\\
S^3_{n,x}&:=& \bigl\{(s,y)\in \bigl[t-2^{-\alpha(1-c\rho)n},t-2^{-\alpha(1+c\rho)n}
\bigr]\times\R \dvtx
\\
&& \hspace*{5pt} |y-x|\leq (t-s)^{(1-\nu)/\alpha}, \bDelta X_s(y)
\leq(t-s)^{(\eta+1+3\rho)/\alpha} \bigr\},
\end{eqnarray*}
and note that the last set is random. In the next lemma, we will show that,
under certain conditions, the jumps of $L^-_{n,\tilde l_n(x),\tilde r_n(x)}$
are\vspace*{1pt} small on the above sets.
We will also need an additional piece of notation. Let
\[
S^4:=\bigl\{(s,y)\in [0,t)\times\R\dvtx \bDelta X_s
\bigl(\{y\}\bigr)>0\bigr\}
\]
be the set of points in space$\times$time where the jumps of $X$, or equivalently of $M$, occur.

\begin{lemma}\label{lem0271}
Let $\eta\in (\eta_{\mathrm{c}},\bar\eta_{\mathrm{c}})\setminus{\{1\}}$, and
$\rho, \nu, c$ be as in~(\ref{eq1071}).
For $\pr$-a.s. $\omega\in A^{\varepsilon}$, there exists
$N_{\fontsize{8.36}{10.36}{\selectfont\ref{lem0271}}}=N_{\fontsize{8.36}{10.36}{\selectfont\ref{lem0271}}}(\omega)$ such that for every
$n\geq N_{\fontsize{8.36}{10.36}{\selectfont\ref{lem0271}}}$ the following holds. Fix arbitrary\vspace*{-3pt}
$x\in (0,1)\setminus S_{\eta-2\rho}$ such that ${\mathsf 1}_{A^{(n)}_{k_n(x)}}=1$.
Then there exists a constant $ C_{(\fontsize{8.36}{10.36}{\selectfont\ref{compens2.3}})}= C_{(\fontsize{8.36}{10.36}{\selectfont\ref{compens2.3}})}(\rho,\nu,c)$
such that for any $(s,y)\in  (\bigcup_{i=1}^3 S^i_{n,x} )\cap S^4$, we have
%
\begin{equation}
\label{compens2.3} \bDelta L^-_{n,\tilde l_n(x),\tilde r_n(x)} \bigl(T_{-}^{n,\tilde l_n(x),\tilde r_n(x)}(s)
\bigr)\leq C_{(\fontsize{8.36}{10.36}{\selectfont\ref{compens2.3}})}2^{-(\eta+2\rho)n}.
\end{equation}
\end{lemma}

\begin{pf}
Fix some $\omega\in A^{\varepsilon}$ and choose $ N_{\fontsize{8.36}{10.36}{\selectfont\ref{lem0271}}}\geq
N_{\fontsize{8.36}{10.36}{\selectfont\ref{corrR}}}$; the choice of $N_{\fontsize{8.36}{10.36}{\selectfont\ref{lem0271}}}$ will be clear from the proof.
Take arbitrary $n\geq N_{\fontsize{8.36}{10.36}{\selectfont\ref{lem0271}}}$.
Fix also some $x\in (0,1)\setminus S_{\eta-2\rho}$ satisfying ${\mathsf 1}_{A^{(n)}_{k_n(x)}}=1$.
Recall~(\ref{eq2071}), (\ref{eq2072}) and in what follows to simplify the notation  denote
\[
l=\tilde l_n(x),\qquad r=\tilde r_n(x).
\]

It is clear that a jump, which appears in the definition of $A^{(n)}_{k_n(x)}$, does not produce a jump of $L^-_{n,l,r}$.

Recall that $x\in (0,1)\setminus S_{\eta-2\rho}$ means that, for any $y\in \R$,
%
\begin{equation}
\label{s-rho} \bDelta X_s(y)\leq (t-s)^{{1}/({\beta+1})-\gamma}|y-x|^{\eta-2\rho-\eta_c}.
\end{equation}

We will treat the three regions $S^i_{n,x}, i=1,2,3$ separately.
\begin{longlist}[(ii)]
\item[(i)]  Let  $(s,y)\in S^1_{n,x}\cap S^4$, that is,
%
\begin{equation}
\label{y-condition} |y-x|\geq (t-s)^{(1-\nu)/\alpha}.
\end{equation}
First assume $|y-x|\leq n^q2^{-n}$.
We will consider the cases of $\eta<1$ and $\eta>1$ separately. We start with the following.
\begin{longlist}[\textit{Case} $\eta<1$.]
\item[\textit{Case} $\eta<1$.]
For all $x_1,x_2\in\mathsf{R}$,
\[
\bigl(p^{\alpha,\eta}_{t-s}(x_1,x_2)
\bigr)^{-}= \bigl(p^{\alpha}_{t-s}(x_1)-p^{\alpha}_{t-s}(x_1)
\bigr)^{-} \leq \frac{p_1^\alpha(0)}{(t-s)^{1/\alpha}}.
\]
Combining this inequality with (\ref{s-rho}) and (\ref{y-condition}), we get
\begin{eqnarray*}
\bDelta L^-_{n,l,r} \bigl(T_{-}^{n,l,r}(s) \bigr)&
\leq & p_1^\alpha(0) (t-s)^{{1}/({\beta+1})-\gamma}|y-x|^{\eta-2\rho-\eta_c}(t-s)^{-1/\alpha}
\\
&=& p_1^\alpha(0) (t-s)^{(\eta_c-\alpha\gamma)/\alpha}|y-x|^{\eta-2\rho-\eta_c}
\\
&\leq & p_1^\alpha(0)|y-x|^{({\eta_c-\alpha\gamma})/({1-\nu})+\eta-\eta_c-2\rho}.
\end{eqnarray*}
We choose $N_{\fontsize{8.36}{10.36}{\selectfont\ref{lem0271}}}$ sufficiently large such that, for any $\nu\in  (\frac{\alpha\gamma+5\rho}{\eta_{\mathrm{c}}},10^{-1} )$ and $|y-x|\leq n^q2^{-n}$,
(\ref{compens2.3}) holds, for all $n\geq N_{\fontsize{8.36}{10.36}{\selectfont\ref{lem0271}}}$.

\item[\textit{Case} $\eta>1$.] Here, we have
\begin{eqnarray*}
&& p^{\alpha,\eta}_{t-s} \bigl(l2^{-Qn}-y,r2^{-Qn}-y
\bigr)
\\
&& \qquad = p^\alpha_{t-s} \bigl(l2^{-Qn}-y
\bigr)-p^\alpha_{t-s} \bigl(r2^{-Qn}-y \bigr)
+(r-l)2^{-Qn}p^{\alpha,\prime}_{t-s} \bigl(r2^{-Qn}-y
\bigr).
\end{eqnarray*}
Note that $p^{\alpha,\prime}_{t-s}(z)\geq0$ for all $z\leq0$. Consequently,
%
\begin{eqnarray}
&& \bigl(p^{\alpha,\eta}_{t-s} \bigl(l2^{-Qn}-y,r2^{-Qn}-y
\bigr) \bigr)^{-} \nonumber\\
&& \label{compens2.3a}\qquad \leq  \bigl(p^{\alpha}_{t-s}
\bigl(l2^{-Qn}-y \bigr)- p^{\alpha}_{t-s}
\bigl(r2^{-Qn}-y \bigr) \bigr)^{-}
\\
\nonumber
&& \qquad \leq  \frac{p_1^\alpha(0)}{(t-s)^{1/\alpha}}
\end{eqnarray}
for all $y\geq r2^{-Qn}$.

In the complementary case $y< r2^{-Qn}$, one can easily get
%
\begin{eqnarray}
&& \bigl(p^{\alpha,\eta}_{t-s} \bigl(l2^{-Qn}-y,r2^{-Qn}-y
\bigr) \bigr)^{-}
\nonumber
\\[-8pt]
\label{compens2.3b}
\\[-8pt]
\nonumber
&& \qquad \leq\frac{p_1^\alpha(0)}{(t-s)^{1/\alpha}} + \bigl\llvert (r-l)2^{-Qn}p^{\alpha,\prime}_{t-s}
\bigl(r2^{-Qn}-y \bigr) \bigr\rrvert.
\end{eqnarray}
If $y\leq (r-1)2^{-Qn}$, then $r2^{-Qn}-y\geq (x-y)/(R+1)$. Thus, using the bound
$p^{\alpha,\prime}_1(z)\leq C|z|^{-\alpha-2}$ and the scaling property, we obtain
%
\begin{eqnarray}
-p^{\alpha,\prime}_{t-s} \bigl(r2^{-Qn}-y \bigr) &\leq &
C(t-s)^{-2/\alpha} \biggl(\frac{r2^{-Qn}-y}{(t-s)^{1/\alpha}} \biggr)^{-\alpha-2}
\nonumber
\\[-8pt]
\label{compens2.3c}
\\[-8pt]
\nonumber
&\leq & CR^{\alpha+2}(t-s)|x-y|^{-\alpha-2}.
\end{eqnarray}
From this, (\ref{compens2.3a}) and (\ref{compens2.3b}) we conclude that, for all $y$ satisfying
$|y-r2^{-Qn}|>2^{-Qn}$,
\begin{eqnarray*}
&& \bigl(p^{\alpha,\eta}_{t-s} \bigl(l2^{-Qn}-y,r2^{-Qn}-y
\bigr) \bigr)^{-}
\\
&&\qquad \leq C \bigl((t-s)^{-1/\alpha}+(r-l)2^{-Qn}(t-s)|x-y|^{-\alpha-2}
\bigr).
\end{eqnarray*}
Combining this with (\ref{s-rho}) we conclude that the corresponding
jump\break
$\bDelta L_{n,l,r}^-(T_{-}^{n,l,r}(s))$
is bounded by
\[
C(t-s)^{{1}/({\beta+1})-\gamma}|y-x|^{\eta-2\rho-\eta_c} \bigl((t-s)^{-1/\alpha}+(r-l)2^{-Qn}(t-s)|x-y|^{-\alpha-2}
\bigr).
\]
Taking into account (\ref{y-condition}), we see that the expression on the right-hand side does not exceed
\[
C \bigl(|y-x|^{({\eta_c-\alpha\gamma})/({1-\nu})+\eta-\eta_c-2\rho}+ (r-l)2^{-Qn}|y-x|^{({\eta-1})/({1-\nu})+\nu\alpha-2\rho}
\bigr).
\]
Now it is easy to see that (\ref{compens2.3}) remain valid for $\eta>1$ under additional assumption
$y\leq (r-1)2^{-Qn}$, or  $y\geq r2^{-Qn}$.

Now we will take care of the case  $y\in((r-1)2^{-Qn},r2^{-Qn})$.
By Corollary~\ref{corrR} and our definition of $r=\tilde{r}_n(x) $ [recall again (\ref{eq2072}) and
$n\geq N_{\fontsize{8.36}{10.36}{\selectfont\ref{lem0271}}}\geq N_{\fontsize{8.36}{10.36}{\selectfont\ref{corrR}}}$, $Q>1$],
we obtain
\begin{equation}
\label{compens2.3d} \bDelta X_s(y)\leq (t-s)^{{1}/({\beta+1})-\gamma}\bigl|y-r2^{-Qn}\bigr|^{\eta-\eta_c}
\end{equation}
for all $y\in((r-1)2^{-Qn},r2^{-Qn})$ and $s\geq t-2^{-\alpha Qn}$. It is clear that $y\in((r-1)2^{-Qn},r2^{-Qn})$
implies that $|y-x|\leq (R+1)2^{-Qn}$. From this and (\ref{y-condition}), we infer that
$(t-s)\leq (R+1)^{\alpha/(1-\nu)}2^{-\alpha Qn/(1-\nu)}$ and, consequently, $s\geq t-2^{-\alpha Qn}$ for
all sufficiently large $n$. Repeating all the arguments after (\ref{compens2.3b}) and using (\ref{compens2.3d})
instead of (\ref{s-rho}), we obtain (\ref{compens2.3}).

Summarizing, (\ref{compens2.3}) is valid
for all $|y-x|\leq n^q2^{-n}$.

 In case $|y-x|\geq n^q2^{-n}$, we apply Corollary~\ref{cor14101} if
$\eta>1$, or Lemma~\ref{L1old} if $\eta<1$ with
$\delta=\eta+3\rho$ (recall the bounds on $\rho$ and $\gamma$ to get that $\delta<1$ if $\eta<1$ and $\delta<2$ if $\eta> 1$)
to get
\[
\bigl\llvert \tilde{p}^{\alpha,\eta}_{t-s} \bigl(l2^{-Qn}-y,r2^{-Qn}-y
\bigr)\bigr\rrvert \leq C\frac{(r-l)^\delta2^{-Q\delta n}}{(t-s)^{(\delta+1)/\alpha}}p_1^\alpha
\biggl(\frac{y-r2^{-Qn}}{(t-s)^{1/\alpha}} \biggr).
\]
Since $(r-l)2^{-Qn}\leq 4n^q2^{-n}$ and $r2^{-Qn}\leq x$, we then have
\[
\bigl\llvert \tilde{p}^{\alpha,\eta}_{t-s} \bigl(l2^{-Qn}-y,r2^{-Qn}-y
\bigr)\bigr\rrvert \leq C\frac{n^{q\delta}2^{-\delta n}}{(t-s)^{(1+\delta)/\alpha}}p_1^\alpha
\biggl(\frac{y-x}{(t-s)^{1/\alpha}} \biggr).
\]
From this bound and~(\ref{eq326}),
 we obtain
\begin{eqnarray*}
&&\bDelta L^-_{n,l,r} \bigl(T_{-}^{n,l,r}(s) \bigr)
\\
&& \qquad \leq C (t-s)^{{1}/({\beta+1})-\gamma}|y-x|^{\eta-\eta_c-2\rho}\frac{n^{q\delta}2^{-\delta n}}{(t-s)^{(\delta+1)/\alpha}}
p_1^\alpha \biggl(\frac{y-x}{(t-s)^{1/\alpha}} \biggr)
\\
&& \qquad \leq Cn^{q\delta}2^{-\delta n}(t-s)^{{1}/({\beta+1})-\gamma-({\delta+1})/{\alpha}+({\alpha+1})/{\alpha}}
|y-x|^{\eta-\eta_c-2\rho-\alpha-1}.
\end{eqnarray*}
As a result, for $|y-x|\geq(t-s)^{(1-\nu)/\alpha}$ we have
\begin{eqnarray*}
&& \bDelta L^-_{n,l,r} \bigl(T_{-}^{n,l,r}(s) \bigr)
\\
&& \qquad \leq C n^{q\delta}2^{-\delta n} \bigl((t-s)^{1/\alpha}
\bigr)^{{\alpha}/({\beta+1})-\alpha\gamma-\delta+\alpha+(\eta-\eta_c-2\rho-\alpha-1)(1-\nu)}
\\
&& \qquad \leq C 2^{-(\eta+2\rho)n} \bigl((t-s)^{1/\alpha}
\bigr)^{\eta_c+1-\alpha\gamma-\eta-3\rho+\alpha+(\eta-\eta_c-2\rho-\alpha-1)(1-\nu)}
\\
&& \qquad = C 2^{-(\eta+2\rho)n} \bigl((t-s)^{1/\alpha} \bigr)^{\nu(\alpha+1-\eta+\eta_c+2\rho)-\alpha\gamma-5\rho}.
\end{eqnarray*}
Finally, recall that $\nu\geq \frac{5\rho+\alpha\gamma}{\eta_{\mathrm{c}}}$, and then  (\ref{compens2.3})
holds with an appropriate constant~$C_{(\fontsize{8.36}{10.36}{\selectfont\ref{compens2.3}})}$.
\end{longlist}

\item[(ii)] Let  $(s,y)\in S^2_{n,x}\cap S^4$.
We start with the subset of $S^2_{n,x}$ where  $|y-x|\leq(t-s)^{(1-\nu)/\alpha}$   and  $s\leq t-2^{-\alpha(1-c\rho)n}$.

If $\eta<1$ then, applying Lemma~\ref{L1old} with $\delta=1$, we obtain
\begin{eqnarray*}
\bDelta L^-_{n,l,r} \bigl(T_{-}^{n,l,r}(s) \bigr)&
\leq& C(t-s)^{{1}/({\beta+1})-\gamma}|y-x|^{\eta-\eta_c-2\rho}\frac{4n^q2^{-n}}{(t-s)^{2/\alpha}}
\\
&\leq & C n^q2^{-n}(t-s)^{{1}/({\beta+1})-\gamma-2/\alpha}(t-s)^{(1-\nu)(\eta-\eta_c-2\rho)/\alpha}
\\
&=& C n^q2^{-n} \bigl((t-s)^{1/\alpha}
\bigr)^{{\alpha}/({\beta+1})-\alpha\gamma-2+(1-\nu)(\eta-\eta_c-2\rho)}
\\
&=& C n^q2^{-n} \bigl((t-s)^{1/\alpha}
\bigr)^{\eta-1-\alpha\gamma-2\rho-\nu(\eta-\eta_c-2\rho)}
\\
&\leq & C n^q2^{-n} \bigl(2^{-n(1-c\rho)}
\bigr)^{\eta-1-\alpha\gamma-2\rho-\nu(\eta-\eta_c-2\rho)}.
\end{eqnarray*}
If $\eta>1$, then we can apply Corollary~\ref{cor14101} with $\delta=2$, which gives
\begin{eqnarray*}
\bDelta L^-_{n,l,r} \bigl(T_{-}^{n,l,r}(s) \bigr)&
\leq & C(t-s)^{{1}/({\beta+1})-\gamma}|y-x|^{\eta-\eta_c-2\rho}\frac{16n^{2q}2^{-2n}}{(t-s)^{3/\alpha}}
\\
&\leq & C n^{2q}2^{-2n}(t-s)^{{1}/({\beta+1})-\gamma-3/\alpha}(t-s)^{(1-\nu)(\eta-\eta_c-2\rho)/\alpha}
\\
&=& C n^{2q}2^{-2n} \bigl((t-s)^{1/\alpha}
\bigr)^{{\alpha}/({\beta+1})-\alpha\gamma-3+(1-\nu)(\eta-\eta_c-2\rho)}
\\
&=& C n^{2q}2^{-2n} \bigl((t-s)^{1/\alpha}
\bigr)^{\eta-2-\alpha\gamma-2\rho-\nu(\eta-\eta_c-2\rho)}
\\
&\leq & C n^{2q}2^{-2n} \bigl(2^{-n(1-c\rho)}
\bigr)^{\eta-2-\alpha\gamma-2\rho-\nu(\eta-\eta_c-2\rho)}.
\end{eqnarray*}
Hence, for $\eta\in (\eta_{\mathrm{c}},\bar{\eta}_{\mathrm{c}})\setminus{\{1\}}$, and
with $c,\rho,\nu$ as in (\ref{eq1071}), we immediately get~(\ref{compens2.3}) with an appropriate constant $C_{(\fontsize{8.36}{10.36}{\selectfont\ref{compens2.3}})}$.

Now we consider the complimentary subset of  $S^2_{n,x}$, where
\[
|y-x|\leq(t-s)^{(1-\nu)/\alpha}\quad \mbox{and}\quad s\geq t-2^{-\alpha(1+c\rho)n}.
\]
It follows from the definition of $\tilde{p}^{\alpha,\eta}_{t-s}$ that, for $\eta>1$,
%
\begin{eqnarray}
&& \bigl\llvert \tilde{p}^{\alpha,\eta}_{t-s}
\bigl(l2^{-Qn}-y,r2^{-Qn}-y \bigr) \bigr\rrvert
\nonumber
\\[-8pt]
\label{compens2.5a}\\[-8pt]
\nonumber
&& \qquad\leq
\frac{2p_1^\alpha(0)}{(t-s)^{1/\alpha}}+(r-l)2^{-Qn}{\sup_{z}\biggl|\frac{\partial}{\partial z}p_1^\alpha(z)\biggr|}\Big/{(t-s)^{2/\alpha}}.
\end{eqnarray}
With this and recalling that $(r-l)2^{-Qn}\leq 4n^q2^{-n}$, we obtain
\begin{eqnarray*}
\bDelta L^-_{n,l,r} \bigl(T_{-}^{n,l,r}(s) \bigr)&
\leq& C(t-s)^{{1}/({\beta+1})-\gamma}|y-x|^{\eta-\eta_c-2\rho}
\\
&&{}\times \bigl((t-s)^{-1/\alpha}+n^q2^{-n}(t-s)^{-2/\alpha}
\bigr)
\\
&\leq & Cn^q2^{-n}(t-s)^{{1}/({\beta+1})-\gamma-2/\alpha}|y-x|^{\eta-\eta_c-2\rho}
\\
&\leq & Cn^q2^{-n} \bigl((t-s)^{1/\alpha}
\bigr)^{\eta_c-1-\alpha\gamma+(1-\nu)(\eta-\eta_c-2\rho)}
\\
&\leq & Cn^q2^{-n} \bigl(2^{-(1+c\rho)n}
\bigr)^{\eta-1-\alpha\gamma-2\rho-\nu(\eta-\eta_c-2\rho)}.
\end{eqnarray*}
Again, with $c,\rho,\nu$ as in (\ref{eq1071}), we immediately get
(\ref{compens2.3}) with an appropriate constant $C_{(\fontsize{8.36}{10.36}{\selectfont\ref{compens2.3}})}$.
If $\eta<1$ then, instead of (\ref{compens2.5a}), we have a simpler inequality
\[
\bigl\llvert \tilde{p}^{\alpha,\eta}_{t-s} \bigl(l2^{-Qn}-y,r2^{-Qn}-y
\bigr)\bigr\rrvert \leq \frac{2p_1^\alpha(0)}{(t-s)^{1/\alpha}}.
\]
Thus,
\[
\bDelta L^-_{n,l,r}\bigl(T_{-}^{n,l,r}(s)\bigr)\leq
C \bigl(2^{-(1+c\rho)n} \bigr)^{\eta-\alpha\gamma-2\rho-\nu(\eta-\eta_c-2\rho)}.
\]
Consequently, (\ref{compens2.3}) holds also for $\eta<1$.

\item[(iii)] Let  $(s,y)\in S^3_{n,x}\cap S^4$.

Recall that on this set,   $\bDelta X_s(y)\leq(t-s)^{(\eta+1+3\rho)/\alpha}$.
 Then
applying Corollary~\ref{cor14101} (if $\eta>1$) or Lemma~\ref{L1old} (if $\eta< 1$)
with $\delta=\eta+3\rho$, and by using that $c,\rho,\nu$ are as in (\ref{eq1071}),
  one can easily get (\ref{compens2.3}) in this case as well.\quad\qed
\end{longlist}
\noqed\end{pf}

Recall that $\bL^-_{n,l, r} = L^-_{n,l,r}(T^{n,l,r}_{-}(t))$.
In the next lemma, we will deal with regions where
$ \{\bL^-_{n,\tilde l_n(x),\tilde r_n(x)} \}_{n\geq 1}$
may take ``big'' values infinitely often.

\begin{lemma}
\label{compens2}
For $x\in (0,1)$  define events
\[
\mathbf{B}_n(x)\equiv \bigl\{\bL^-_{n,\tilde l_n(x),\tilde r_n(x)}\geq
2^{-\eta n-1} \bigr\} \cap \{ {\mathsf 1}_{A^{(n)}_{k_n(x)}}=1 \},
\qquad n\geq 1.
\]
For any $\eta\in (\eta_{\mathrm{c}},\bar\eta_{\mathrm{c}})\setminus{\{1\}}$, we have
\[
\pr \bigl( \bigl\{x\in(0,1)\setminus S_{\eta-2\rho} \dvtx
\mathbf{B}_n(x) \mbox{ i.o.} \bigr\} \subset \tilde{G}_\eta
|A^\varepsilon \bigr)=1.
\]
\end{lemma}

\begin{pf}
First, we will show that on $\omega\in A^{\varepsilon}$
%
\begin{equation}
\label{eq18105} \bigl\{x\in (0,1)\setminus S_{\eta-2\rho}\dvtx
\mathbf{B}^*_n(x)\mbox{ i.o.} \bigr\} \subset \tilde{G}_\eta,
\end{equation}
where for $x\in (0,1)$,
\[
\mathbf{B}^*_n(x)\equiv \bigl\{\bDelta L^-_{n,\tilde l_n(x),\tilde r_n(x)}\geq
C_{(\fontsize{8.36}{10.36}{\selectfont\ref{compens2.3}})}2^{-(\eta+2\rho)n} \bigr\} \cap \{{\mathsf
1}_{A^{(n)}_{k_n(x)}}=1 \},\qquad n\ge 1.
\]
On $\omega\in A^{\varepsilon}$, take $n\geq N_{\fontsize{8.36}{10.36}{\selectfont\ref{lem0271}}}$, and
fix some $x\in (0,1)\setminus S_{\eta-2\rho}$ satisfying ${\mathsf 1}_{A^{(n)}_{k_n(x)}}=1$.
First of all, by definition of $A^{(n)}_{k_n(x)}$, if ${\mathsf 1}_{A^{(n)}_{k_n(x)}}=1$
then there exists a jump of~$M$ of the form $\tilde r\delta_{(\tilde s,\tilde y)}$
with $\tilde r,\tilde s,\tilde y$ as in $G^{(n)}_{k_n(x)}$ [see~(\ref{eq18104})].
Moreover, again by the definition of $A^{(n)}_{k_n(x)}$, the spatial position of the jump,
$\tilde y$, is in $I_{k_n(x)-2n^q-2}^{(n)}$. Hence, it is easy to see that this jump does not
contribute to the jumps of  $L^-_{n,\tilde l_n(x),\tilde r_n(x)}$
 that is,
$\bDelta L^-_{n,\tilde l_n(x),\tilde r_n(x)}(T^{n,\tilde l_n(x),\tilde r_n(x)}_{-}(\tilde s))=0$.
Thus,  we have to show that,
if $\bDelta L^-_{n,\tilde l_n(x),\tilde r_n(x)}\geq C_{(\fontsize{8.36}{10.36}{\selectfont\ref{compens2.3}})}2^{-(\eta+2\rho)n},\mbox{ and }{\mathsf 1}_{A^{(n)}_{k_n(x)}}=1$,
then there exists at least one another big jump of $M$ with properties described in
$G^{(n)}_k$.

By Lemma~\ref{lem0271}, we get that if there exists $s$ such that
\[
\bDelta L^-_{n,\tilde l_n(x),\tilde r_n(x)}\bigl(T^{n,\tilde l_n(x),\tilde r_n(x)}_{-}(s)\bigr)\geq
C_{(\fontsize{8.36}{10.36}{\selectfont\ref{compens2.3}})}2^{-(\eta+2\rho)n},
\]
then the corresponding jump of $M$, of the form  $r\delta_{(s,y)}$,
has to satisfy
\begin{eqnarray*}
|y-x| &\leq &  (t-s)^{(1-\nu)/\alpha},\qquad s\in\bigl[t-2^{-\alpha(1-c\rho)n},t-2^{-\alpha(1+c\rho)n}
\bigr], \\
 r &\geq &  (t-s)^{(\eta+1+3\rho)/\alpha}.
\end{eqnarray*}
This yields that on  $A^{\varepsilon}$ (\ref{eq18105}) holds.

Second, it follows from the second inequality in Lemma~\ref{L3} and \eqref{eq181012}, that
\[
\mathbf{P} \bigl(\bL^-_{n,l,r}\geq 2^{-\eta n-1}; \bDelta
L^-_{n,l,r}\leq C_{(\fontsize{8.36}{10.36}{\selectfont\ref{compens2.3}})}2^{-(\eta+2\rho)n} \bigr)\leq \exp\bigl
\{-c2^{2\rho n}\bigr\}
\]
for all $l,r$ satisfying $(r-l)2^{-Qn}\leq 4n^q2^{-n}$.
(Recall that $(\tilde r_n(x)-\tilde l_n(x))2^{-Qn}\leq 4n^q2^{-n}$.)

Applying now Borel--Cantelli, we conclude that, with probability one,
\[
\bigcup_{0\leq l<r\leq 2^{Qn}-1,(r-l)2^{-Qn}\leq 4n^q2^{-n}} \bigl\{\bL^-_{n,l,r}\geq
2^{-\eta n-1}; \bDelta L^-_{n,l,r}\leq C_{(\fontsize{8.36}{10.36}{\selectfont\ref{compens2.3}})}2^{-(\eta+2\rho)n}
\bigr\}
\]
occurs only finite number of times. This completes the proof of the lemma.
\end{pf}

\subsection{Proof of Proposition~\texorpdfstring{\protect\ref{prop08023}}{4.1}}

Fix arbitrary
$\eta\in(\eta_{\mathrm{c}},\bar{\eta}_{\mathrm{c}})\setminus\{1\}$. Also fix
\[
Q= \biggl[\max \biggl\{4\frac{\eta}{\eta_{\mathrm{c}}}, 4\frac{\eta}{|\eta-1|} \biggr\}+2
\biggr],
\]
where $[x]$ denotes the integer part of $x$.

It follows from Lemmas \ref{lower4} and \ref{compens2},  that
for every $x\in(\tilde{J}_{\eta,1}\setminus S_{\eta-2\rho})\setminus \tilde{G}_\eta$, for $\pr$-a.s. $\omega$ on $A^{\varepsilon}$, there exists
a (random) sequence $\{n_j\}_{j\geq 1}$ such that
\[
\bL^+_{n_j,\tilde{l}_{n_j}(x),\tilde{r}_{n_j}(x)}\geq n_j^{m}2^{-\eta n_j},
\qquad \bL^-_{n_j,\tilde{l}_{n_j}(x),\tilde{r}_{n_j}(x)}\leq 2^{-(\eta n_j-1)},
\]
for all $n_j$ sufficiently large.
This implies that, on the event $A^{\varepsilon}$, we have
%
\begin{equation}
\label{compens2.8} \liminf_{j\to\infty}2^{(\eta+\delta)n_j} \biggl\llvert
\wZ^{2,\eta}_t \biggl(\frac{\tilde{l}_{n_j}(x)}{2^{Qn_j}},\frac{\tilde{r}_{n_j}(x)}{2^{Qn_j}}
\biggr) \biggr\rrvert =\infty,
\end{equation}
for any $\delta>0$. Recall that, $X_t(\cdot)$ and $Z^2_t(\cdot)$ are H\"older continuous
with any  exponent less than $\eta_c$ at every point of $(0,1)$. Therefore, recalling that
 $Q>4\frac{\eta}{\eta_c}$, we have
%
\begin{eqnarray}
&& \lim_{j\rightarrow\infty}\sup_{x\in (0,1)}2^{(\eta+\delta)n_j}
\bigl\llvert X_t(x)-X_t \bigl(\tilde{r}_{n_j}(x)2^{-Qn_j}
\bigr) \bigr\rrvert
\nonumber
\\[-8pt]
\label{compens2.1}
\\[-8pt]
\nonumber
&& \qquad =\lim_{j\rightarrow \infty} C(\omega)
2^{-({1}/{2})Q\eta_c n_j}2^{(\eta+\delta)n_j} =0, \qquad\pr\mbox{-a.s. on }A^{\varepsilon}.
\end{eqnarray}
If $\eta<1$, then $\wZ^{2,\eta}_t(x_1,x_2)=Z^2_t(x_1)
-Z^2_t(x_2)$.
Therefore, combining (\ref{compens2.8}) and~(\ref{compens2.1}),
we conclude that
%
\begin{equation}
\label{compens2.1a}
H_{Z^2}(x)\leq \eta\qquad \mbox{for all }x\in (
\tilde{J}_{\eta,1}\setminus S_{\eta-2\rho})\setminus
\tilde{G}_\eta \ \pr\mbox{-a.s. on }A^{\varepsilon}.
\end{equation}
Assume now that $\eta>1$. In this case, we infer from (\ref{eq14104}) that $\pr$-a.s. on $A^{\varepsilon}$,
%
\begin{equation}
\label{compens2.1b}\quad \limsup_{j\to\infty} \sup_{x\in(0,1)\setminus S_{\eta-2\rho}}2^{Q(\eta-1-2\rho-2\alpha\gamma)n_j}
\bigl\llvert V' \bigl(\tilde{r}_{n_j}2^{-Qn_j}
\bigr)-V'(x) \bigr\rrvert =0.
\end{equation}
Combining (\ref{compens2.8}), (\ref{compens2.1}) and (\ref{compens2.1b}), 
and recalling that
 $Q>4\eta/(\eta-1)$, we get
\[
\liminf_{j\to\infty}2^{(\eta+\delta)n_j} \bigl\llvert
\wZ^{2,\eta}_t \bigl(2^{-Qn_j}\tilde{l}_{n_j}(x),x
\bigr)\bigr\rrvert =\infty\qquad \mbox{on }A^{\varepsilon}, \pr\mbox{-a.s.}
\]
This implies that (\ref{compens2.1a}) holds.

We know, by Lemma~\ref{P1}, that
\[
H_{Z^2}(x)\geq \eta-\alpha\gamma-2\rho \qquad \mbox{for all }x\in
(0,1)\setminus S_{\eta-2\rho}, \pr\mbox{-a.s.}
\]
This and (\ref{compens2.1a}) imply that on  $A^{\varepsilon}$, $\pr\mbox{-a.s.}$,
%
\begin{eqnarray}
 && \eta-\alpha\gamma-2\rho\leq H_{Z^2}(x)\leq \eta\qquad
\nonumber
\\[-8pt]
\label{compens2.0}\\[-8pt]
\eqntext{\mbox{for all }x\in (\tilde{J}_{\eta,1}\setminus S_{\eta-2\rho})
\setminus \tilde{G}_\eta, \forall\eta\in(\eta_{\mathrm{c}},\bar{
\eta}_{\mathrm{c}})\setminus\{1\}.}
\end{eqnarray}
It follows easily from Lemma~\ref{P2}, Corollary~\ref{lower3} and Lemma~\ref{compens} that
on  $A^{\varepsilon}$
\[
\operatorname{dim} \bigl((\tilde{J}_{\eta,1}\setminus
S_{\eta-2\rho})\setminus \tilde{G}_\eta \bigr)\geq(\beta+1) (
\eta-\eta_c), \qquad \pr\mbox{-a.s.}
\]
Thus, by (\ref{compens2.0}),
\[
\operatorname{dim}\bigl\{x \dvtx H_{Z^2}(x)\leq\eta\bigr\}\geq(
\beta+1) (\eta-\eta_c) \qquad \mbox{on } A^{\varepsilon}, \pr
\mbox{-a.s.}
\]
It is clear that
\begin{eqnarray*}
&& \bigl\{x \dvtx H_{Z^2}(x)=\eta \bigr\}\cup\bigcup
_{n=n_0}^\infty \bigl\{x \dvtx H_{Z^2}(x)\in\bigl(
\eta-n^{-1},\eta-(n+1)^{-1}\bigr] \bigr\}
\\
&& \qquad = \bigl\{x \dvtx \eta-n_0^{-1}\leq
H_{Z^2}(x)\leq\eta \bigr\}.
\end{eqnarray*}
Consequently,
\begin{eqnarray*}
&& \mathcal{H}_\eta \bigl( \bigl\{x \dvtx \eta-n_0^{-1}
\leq H_{Z^2}(x)\leq\eta \bigr\} \bigr)
\\
&& \qquad =\mathcal{H}_\eta \bigl( \bigl\{x \dvtx
H_{Z^2}(x)= \eta \bigr\} \bigr)
\\
&&\quad \qquad {}+\sum_{n=n_0}^\infty
\mathcal{H}_\eta\bigl( \bigl\{x \dvtx H_{Z^2}(x)\in \bigl(
\eta-n^{-1},\eta-(n+1)^{-1}\bigr] \bigr\} \bigr).
\end{eqnarray*}
Since the dimensions of $S_{\eta-2\rho}$ and $\tilde{G}_\eta$ are smaller than $\eta$, the $\mathcal{H}_\eta$-measure of these
sets equals zero. Applying Corollary~\ref{lower3}, we then conclude that on $ A^{\varepsilon}$
\[
\mathcal{H}_\eta\bigl((\tilde{J}_{\eta,1}\setminus
S_{\eta-2\rho})\setminus \tilde{G}_\eta\bigr)>0, \qquad \pr\mbox{-a.s.}
\]
And in view of (\ref{compens2.0}),
$\mathcal{H}_\eta(\{x \dvtx  \eta-n_0^{-1}\leq H_{Z^2}(x)\leq\eta\})>0$. Furthermore, it follows from Proposition~\ref{prop0802},
that  dimension of the set $\{x \dvtx  H_{Z^2}(x)\in(\eta-n^{-1},\eta-(n+1)^{-1}]\}$ is bounded from above by
$(\beta+1)(\eta-(n+1)^{-1}-\eta_c)$.
Hence, the definition of
 $\mathcal{H}_{\eta}$ immediately yields
\[
\mathcal{H}_\eta\bigl(\bigl\{x \dvtx H_{Z^2}(x)\in\bigl(
\eta-n^{-1},\eta-(n+1)^{-1}\bigr]\bigr\}\bigr)=0 \qquad \mbox{on }
A^{\varepsilon}, \pr\mbox{-a.s.},
\]
for all $n\geq n_0$. As a result, we have
%
\begin{equation}
\label{eq020810} \mathcal{H}_\eta \bigl( \bigl\{x \dvtx
H_{Z^2}(x)= \eta \bigr\} \bigr)>0\qquad \pr\mbox{-a.s. on }A^{\varepsilon}.
\end{equation}
Since $\varepsilon>0$ was arbitrary, this implies that~(\ref{eq020810}) is satisfied on the whole
probability space $\pr$-a.s.
From this, we get  that
\[
\operatorname{dim}\bigl\{x \dvtx H_{Z^2}(x)=\eta\bigr\}\geq(\beta+1) (
\eta-\eta_c), \qquad \pr\mbox{-a.s.}
\]

\section*{Acknowledgements}

L. Mytnik thanks the LMU Munich and V. Wachtel thanks the Technion for their hospitality.
Both authors thank an anonymous referee and an Associate Editor for their
careful reading of the manuscript and very helpful
suggestions.






\printaddresses
\end{document}